\newtheorem{Theorem}{Theorem}[section]
\newtheorem{Lemma}[Theorem]{Lemma}
\newtheorem{Corollary}[Theorem]{Corollary}
\newtheorem{Proposition}[Theorem]{Proposition}
\newtheorem{Remark}[Theorem]{Remark}
\newtheorem{Example}[Theorem]{Example}
\newtheorem{Definition}[Theorem]{Definition}
\newtheorem{Question}[Theorem]{Question}
\newtheorem{Notation}[Theorem]{Notation}
\def\fka{{\mathfrak a}}
\def\fkb{{\mathfrak b}}
\def\fkq{{\mathfrak q}}
\def\fkm{{\mathfrak m}}
\def\opn#1#2{\def#1{\operatorname{#2}}}
\opn\Spec{Spec}
\opn\Supp{Supp}
\opn\supp{supp}
\opn\Max{Max}
\opn\max{max}
\opn\Min{Min}
\opn\min{min}
\opn\Ass{Ass}
\opn\Assh{Assh}
\opn\Ann{Ann}
\opn\depth{depth}
\opn\rank{rank}
\opn\Mat{Mat}
\opn\Tot{Tot}
\opn\Sym{Sym}
\opn\ord{ord}
\opn\div{div}
\opn\Div{Div}
\opn\cl{cl}
\opn\Cl{Cl}
\opn\Ker{Ker}
\opn\Coker{Coker}
\opn\Im{Im}
\opn\Hom{Hom}
\opn\Tor{Tor}
\opn\Ext{Ext}
\opn\End{End}
\opn\Fitt{Fitt}
\opn\Aut{Aut}
\opn\id{id}
\opn\nat{nat}
\opn\pff{pf}
\opn\Pf{Pf}
\opn\GL{GL}
\opn\SL{SL}
\opn\G{G}
\opn\E{E}
\opn\H{H}
\opn\M{M}
\opn\mod{mod}
\opn\ord{ord}
\opn\det{det}
\opn\Soc{Soc}
\opn\chara{char}
\opn\length{\ell}
\opn\pd{pd}
\opn\rk{rk}
\opn\projdim{proj\,dim}
\opn\injdim{inj\,dim}
\opn\rank{rank}
\opn\depth{depth}
\opn\grade{grade}
\opn\height{ht}
\opn\embdim{emb\,dim}
\opn\codim{codim}
\renewcommand{\tilde}{\widetilde}
\renewcommand{\bar}{\overline}
\title{
Indecomposable 
integrally closed modules of arbitrary rank over a two-dimensional regular local ring
}
\author{Futoshi Hayasaka}
\address{Department of Environmental and Mathematical Sciences, Okayama University, 
3-1-1 Tsushimanaka, Kita-ku, Okayama, 700-8530, JAPAN}
\email{hayasaka@okayama-u.ac.jp}
\keywords{integral closure, indecomposable module, monomial ideal, regular local ring}
\subjclass[2020]{Primary 13B22; Secondary 13H05}
\begin{document}

\maketitle

\begin{abstract}
In this paper, we construct 
indecomposable integrally closed modules
of arbitrary rank over a two-dimensional
regular local ring. The modules are quite explicitly constructed from a given complete monomial ideal. 
We also give structural and numerical results on integrally closed modules. These are used in the proof of indecomposability of the modules. 
As a consequence, we have a large class of indecomposable integrally closed modules of arbitrary rank whose ideal is not necessarily simple. 
This extends the original result 
on the existence of indecomposable integrally closed modules and strengthens the non-triviality of the theory developed by Kodiyalam. 
\end{abstract}

\section{Introduction}

The theory of complete (integrally closed) ideals in two-dimensional regular local rings was developed by Zariski in \cite{Za} and further in \cite[Appendix 5]{ZS}. Zariski proved the product theorem and the unique factorization theorem. In a two-dimensional regular local ring, the product of any two complete ideals is again complete, and  
any non-zero complete ideal can be expressed uniquely 
(except for ordering) as a product of simple complete ideals. 
Here an ideal is said to be simple if it cannot be expressed as a product of two proper ideals. 
Since then, the classic theory has been deeply studied and extended by several authors, including 
Cutkosky (\cite{Cut1, Cut2, Cut}) and Lipman (\cite{Lip, Li}). 
See also \cite{Hu, HuSa, HuSw} for different treatments and other interesting results on this subject.  

On the other hand, Rees in \cite{Rees} introduced and studied the integral closure for a module. In \cite{Ko}, Kodiyalam initiated the study of integrally closed modules over a two-dimensional regular local ring in analogy with Zariski's theory on complete ideals. Among several interesting results, he established some relationships between integrally closed modules and certain Fitting ideals. In more detail, let $(R, \fkm)$ be a two-dimensional regular local ring with infinite residue field $R/\fkm$, 
and let $M$ be a finitely generated torsion-free $R$-module. We denote by $\bar M$ the integral closure of $M$ in the sense of 
Rees. 
We define the ideal $I(M)$ associated to $M$ as  
$$I(M)=\Fitt_0(M^{\ast \ast}/M), $$ 
where $M^{\ast \ast}$ is the double $R$-dual of $M$ which is a free $R$-module containing $M$. 
With this notation, Kodiyalam proved in \cite[Theorem 5.4]{Ko} that 
taking integral closure commutes with taking the ideal, that is, the equality 
$$I(\bar M)=\bar{I(M)}$$ 
holds true. In particular, the ideal $I(M)$ is complete if $M$ is integrally closed. This can be viewed as a generalization of the product theorem of Zariski. 
Indeed, if we consider a direct sum $M=\fka \oplus \fkb$ of two ideals 
$\fka$ and $\fkb$ in $R$, then we have the equality 
$$\bar \fka \cdot \bar \fkb =\bar{\fka \fkb} $$
since $\bar M=\bar \fka \oplus \bar \fkb$. 
This implies the product theorem of Zariski, and vice versa. Moreover, Kodiyalam proved the following (see \cite[Theorem 5.7]{Ko}). 

\begin{Theorem}[Kodiyalam]\label{kodi}
Let $(R, \fkm)$ be a two-dimensional regular local ring with the maximal ideal $\fkm$, 
infinite residue field $R/\fkm$. Let $M$ be a finitely generated torsion-free $R$-module. 
Suppose that $M$ has no free direct summand. Then 
$$\bar M \ \text{is an indecomposable integrally closed $R$-module},$$ 
if the ideal $\bar{I(M)}$ is simple. In particular, there exist indecomposable integrally closed $R$-modules of arbitrary rank $($with simple complete ideals$)$. 
\end{Theorem}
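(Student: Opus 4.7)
The plan is a proof by contradiction using Kodiyalam's identity $I(\bar M)=\bar{I(M)}$. Suppose $\bar M=N_1\oplus N_2$ with both summands nonzero. A direct summand of an integrally closed module is again integrally closed (immediate from Rees's valuative description, which gives $\overline{N_1\oplus N_2}=\bar{N_1}\oplus\bar{N_2}$), so each $N_i$ is integrally closed and, by Kodiyalam's Theorem \cite[Theorem 5.4]{Ko} applied to $N_i$, each $I(N_i)$ is a complete ideal. Since $(-)^{\ast\ast}$ and the quotient $(-)^{\ast\ast}/(-)$ commute with direct sums and $\Fitt_0(A\oplus B)=\Fitt_0(A)\Fitt_0(B)$, we obtain
$$\bar{I(M)}\;=\;I(\bar M)\;=\;I(N_1)\,I(N_2).$$
Over a two-dimensional regular local ring a finitely generated torsion-free module is reflexive if and only if it is free (reflexive forces $\depth\ge 2$, and Auslander--Buchsbaum then gives freeness), so $I(N)=R$ precisely when $N$ is free. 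Thus, as soon as neither $N_1$ nor $N_2$ is free, both factors on the right above are proper complete ideals and the display contradicts the simplicity of $\bar{I(M)}$.

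The technical heart of the argument is therefore to upgrade the hypothesis that $M$ has no free direct summand to the same statement for $\bar M$. My plan is the following direct lifting. Suppose $\bar M=R\oplus N$ and let $\pi\colon\bar M\to R$ be the projection onto the free summand. For any $y\in\bar M$, an integral dependence relation $y^n+a_1y^{n-1}+\cdots+a_n=0$ in $\Sym(M^{\ast\ast})$ with $a_i\in\Sym^i(M)$ pushes forward under the graded algebra extension of $\pi$ to an integral relation $\pi(y)^n+b_1\pi(y)^{n-1}+\cdots+b_n=0$ in $R$ with $b_i\in\pi(M)^i$; hence $\pi(\bar M)\subseteq\overline{\pi(M)}$, the integral closure on the right being that of an ideal of $R$. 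Surjectivity of $\pi$ on $\bar M$ then forces $\overline{\pi(M)}=R$, and since $\fkm$ is integrally closed, this forces $\pi(M)=R$. The surjection $\pi|_M\colon M\twoheadrightarrow R$ then splits off a free summand of $M$, contradicting the hypothesis. This lifting is the main obstacle and the real work of the proof.

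For the ``in particular'' clause, simple complete $\fkm$-primary ideals are plentiful ($\fkm$ itself is simple), so it suffices to fix such an $I$ and, for each prescribed rank $n$, construct a torsion-free $R$-module $M$ of rank $n$ with no free summand and with $I(M)=I$ (the latter being automatically simple and complete). A natural candidate is a sufficiently generic non-split Bourbaki-style extension
$$0\to R^{n-1}\to M\to I\to 0,$$
for which $M^{\ast\ast}=R^n$ and $M^{\ast\ast}/M=R/I$, giving $I(M)=\Fitt_0(R/I)=I$, with no free summand for a generic choice of extension class. The first part of the theorem then produces an indecomposable integrally closed $R$-module $\bar M$ of rank $n$.
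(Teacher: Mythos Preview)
Your proof of the main assertion is correct and is essentially Kodiyalam's own argument; the present paper does not reprove this part but simply cites \cite[Theorem~5.7]{Ko}. One detail you pass over: to push an integral relation living in $\Sym(F)$ through $\pi$, you need $\pi$ defined on all of $F=M^{\ast\ast}$, not merely on $\bar M$. This is harmless because the restriction map $F^{\ast}\to\bar M^{\ast}$ is surjective (its cokernel $\Ext^1_R(F/\bar M,R)$ vanishes, as $F/\bar M$ has finite length and $\depth R=2$), so $\pi$ extends uniquely to $F$.

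Your treatment of the ``in particular'' clause, however, contains a genuine error. The claim that a non-split extension $0\to R^{n-1}\to M\to I\to 0$ satisfies $M^{\ast\ast}/M\cong R/I$ is false: for $I=\fkm$ and $n=2$ the unique non-split such extension is the Koszul resolution $0\to R\to R^2\to\fkm\to 0$, so $M\cong R^2$ and $I(M)=R$. In fact your two requirements are incompatible for $n\ge 2$: if $M^{\ast\ast}/M\cong R/I$ then $M$ is the kernel of a surjection $R^n\to R/I$, and since $R/I$ is local some coordinate of that surjection must be a unit; a change of basis then gives $M\cong I\oplus R^{n-1}$, which has a free summand. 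More broadly, $M$ having no free summand forces $M\subset\fkm F$, hence $I(M)\subset\fkm^n$ and $\ord(I(M))\ge n$, so you cannot fix one simple ideal and realize all ranks from it.

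The paper supplies its own proof of the existence clause by an entirely different, constructive route (Corollary~\ref{6.4}): for a prescribed rank $e\ge 2$ one chooses the simple complete monomial ideal $I=\bar{\langle x^r,y^{r+1}\rangle}$ with $r\ge e$ and forms the explicit module $M(I;e)$ of Definition~\ref{3.1}; Theorems~\ref{3.7}, \ref{5.2} and~\ref{5.4} then show directly that $M(I;e)$ is integrally closed and indecomposable of rank $e$ with $I(M(I;e))=I$.
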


A motivation of this paper comes from the following question in \cite[Example 5.8]{Ko}: 
Does the converse to Theorem \ref{kodi}  hold in the sense that an indecomposable integrally closed $R$-module $M$ of rank bigger than $1$ have a simple complete ideal $I(M)$?

In \cite{Ha}, we show that the converse to Theorem \ref{kodi} does not hold by showing that 
there exist numerous counterexamples of rank 2. In fact, we prove the following  general result in \cite[Theorem 1.3]{Ha}: 
Let
\begin{equation}\label{completemono}
    I=\langle x^{a_i}y^{b_i} \mid 0 \leq i \leq r \rangle 
\end{equation} 
be an $\fkm$-primary complete monomial ideal 
with respect to a regular system of parameters $x, y$ for $R$ 
with $a_0>a_1> \dots > a_{r-1} > a_r=0, b_0=0<b_1< \dots < b_{r-1}< b_r$, and $a_0 \leq b_r$.

\begin{Theorem}$($\cite[Theorem 1.3]{Ha}$)$\label{prev}
Let $(R, \fkm)$ be a two-dimensional regular local ring with the maximal ideal $\fkm$, 
infinite residue field $R/\fkm$. Let $I$ be 
an $\fkm$-primary complete monomial ideal in $(\ref{completemono})$. Suppose that either 
\begin{enumerate}
\item $r \geq 3$, or
\item $r=2$, $xy \notin I$ $($and hence $x^2 \in I)$
\end{enumerate}
is satisfied. Then there exists a finitely generated torsion-free indecomposable 
integrally closed $R$-module $M$ of rank $2$ 
with $I(M)=I$.  
\end{Theorem}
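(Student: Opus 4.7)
The plan is to exhibit $M$ as an explicit rank-$2$ submodule of $R^2$ built from the monomial generators of $I$, to establish $I(M)=I$ via a Fitting-ideal calculation combined with Kodiyalam's identity $I(\bar M)=\bar{I(M)}$, and then to prove indecomposability by combining the additivity of $I(-)$ under direct sums with numerical invariants of integrally closed rank-$2$ modules.

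For the construction I would take a $2\times n$ matrix $\Phi$ whose columns are vectors of monomials in $x,y$ encoded by the exponent pairs $(a_i,b_i)$, arranged so that the ideal of $2\times 2$ minors of $\Phi$ has integral closure equal to $I$. Setting $M := \bar{\Im \Phi}$, the module $M$ is integrally closed, has rank $2$ with $M^{\ast\ast}=R^2$, and satisfies $I(M)=\bar{\Fitt_0(R^2/\Im \Phi)}=I$ by Kodiyalam's theorem, since $I$ is already complete. The matrix $\Phi$ must be chosen so that $M$ has no free direct summand, which is typically achieved by twisting the columns with suitable elements of $\fkm$ to destroy any obvious splitting.

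For indecomposability, suppose $M = N_1 \oplus N_2$. Since $M$ has no free summand, both $N_i$ are rank-$1$ non-free torsion-free modules, hence isomorphic to complete $\fkm$-primary ideals $J_1, J_2 \subset R$. Using the multiplicativity of the Fitting ideal under direct sums (the same identity that, for $M = \fka \oplus \fkb$, produces $I(M)=\fka \fkb$ in the discussion preceding Theorem \ref{kodi}), one obtains $I = I(M) = J_1 J_2$ with both factors complete and proper. Since $I$ is not simple in general, Zariski's unique factorization produces finitely many candidates for the pair $(J_1,J_2)$; I would rule each of them out by computing invariants of $M$ that are additive on direct sums---the minimal number of generators $\mu(M)$, the colength $\ell_R(R^2/M)$, and the Buchsbaum--Rim multiplicity---and showing that no factorization $I = J_1 J_2$ can match all of them simultaneously with $M \cong J_1 \oplus J_2$. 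The hypotheses $r \ge 3$ or ($r=2$ with $x^2 \in I$) are precisely the combinatorial conditions that should render this matching impossible.

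The main obstacle is this last step. Because $I$ can factor in many ways into complete ideals, one must exclude every factorization simultaneously, and the coarse invariants above will in general not suffice; sharper structural and numerical results on integrally closed rank-$2$ modules are needed to distinguish the specific $M$ from each candidate $J_1\oplus J_2$. The excluded case $r=2$ with $xy\in I$ is precisely the configuration in which a genuine splitting $M\cong J_1\oplus J_2$ occurs, which serves as a sharpness check on the numerical obstruction.
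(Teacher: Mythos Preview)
Your outline has the right architecture but two genuine gaps that the paper fills in specific ways.

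First, the construction: you propose $M=\bar{\Im\Phi}$, making $M$ integrally closed by fiat. The difficulty is that you then cannot compute $\ell_R(R^2/M)$ explicitly, and this number is exactly what drives the indecomposability argument. The paper instead writes down an explicit matrix $\tilde{M(I;2)}$ (Definition~\ref{3.1}) and proves directly that $M(I;2)$ is \emph{already} integrally closed (Theorem~\ref{3.7}), by passing to a quadratic transform $S=R[\frac{\fkm}{x+y}]$ and checking that $MS_Q$ splits as a complete ideal plus a free summand at every maximal ideal $Q$. This explicitness is what allows the exact computation $\ell_R(R/I)-\ell_R(F/M)=\binom{2}{2}=1$ (Proposition~\ref{5.1}).

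Second, and more seriously, you correctly anticipate that $\mu$, colength, and Buchsbaum--Rim multiplicity alone will not separate $M$ from every $J_1\oplus J_2$; indeed $\mu$ matches automatically since both sides are contracted. The paper's missing ingredient is twofold. A structure theorem (Theorem~\ref{4.2}) shows that under the hypotheses $\ord(I(M))\ge e+1$ and $\bar{I_1(M)}=\fkm^{e-1}$, any decomposition of $M$ forces $\fkm$ itself to appear as a summand---this uses Zariski's unique factorization to pin down one factor completely, not merely to enumerate candidates. Then a general inequality $\ell_R(R/I(L))-\ell_R(L^{**}/L)\ge\binom{\rank L}{2}$ for every integrally closed $L$ (Theorem~\ref{4.4}, proved via the Kodiyalam--Mohan formula $\ell_R(R/I(M))-\ell_R(F/M)=e(I(M))-e(M)$) is combined with the exact value $1$ for $M(I;2)$ to force $r\le 2$, contradicting $r\ge 3$. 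The case $r=e=2$ needs a separate colength inequality for factorizations of complete monomial ideals (Lemma~\ref{5.5}). Your proposal gestures at ``sharper structural and numerical results'' but does not identify the quantity $\ell_R(R/I(M))-\ell_R(F/M)$ or the structure theorem, which are the two ideas that make the argument close.
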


This immediately implies that there exists a large class of indecomposable integrally closed $R$-modules $M$ of rank $2$ with non-simple complete ideal $I(M)$.
We refer to \cite{BiMo} for another type of examples of rank $2$ motivated by the question. 
Then it is natural to ask if there exist such modules of higher rank: 

\begin{Question}
{\rm 
Does there exist an indecomposable integrally closed $R$-module $M$ of arbitrary rank with non-simple ideal $I(M)$?
}
\end{Question}

The purpose of this paper is to show the existence of such modules. 
In fact, we prove a stronger result which shows that there exist a lot of such indecomposable integrally closed modules in arbitrary rank. 
The result can be summarized as follows:

\begin{Theorem}[Theorem \ref{6.1}]\label{main}
Let $(R, \fkm)$ be a two-dimensional regular local ring with the maximal ideal $\fkm$, 
infinite residue field $R/\fkm$. 
Let $I$ be 
an $\fkm$-primary complete monomial ideal in $(\ref{completemono})$. Let $e$ be an integer with $2 \leq e \leq r$ and $a_{r-e+2}=e-2$. 
Suppose that either 
\begin{enumerate}
\item $r \geq e+1$, or
\item $r=e$, $x^{r-1}y \notin I$ and $x^r \in I$
\end{enumerate}
is satisfied. Then there exists a finitely generated torsion-free indecomposable 
integrally closed $R$-module $M$ of rank $e$ 
with $I(M)=I$.  
\end{Theorem}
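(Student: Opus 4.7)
The plan is threefold: first, construct an explicit candidate $M$; second, verify that $I(M)=I$ and that $M$ is integrally closed; and third, prove indecomposability by exploiting the multiplicative behavior of $I(\cdot)$ under direct sums together with the structural and numerical results mentioned in the abstract.

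\emph{Construction.} Generalizing the rank-$2$ construction of \cite{Ha}, I would define $M$ as the image in $R^e$ of a map $R^{r+1}\to R^e$ given by an $e\times(r+1)$ matrix $A$ whose entries are built from the monomials $x^{a_i}y^{b_i}$ together with some sufficiently generic elements $\lambda_i\in R$. The numerical hypothesis $a_{r-e+2}=e-2$, combined with the strict monotonicity of the $a_i$'s, forces $a_{r-k}=k$ for $0\le k\le e-2$, so the last $e-1$ columns of $A$ fit naturally into a "staircase" block whose $e\times e$ minors reproduce the generators $x^{a_i}y^{b_i}$ with small $x$-exponent; the remaining $r-e+2$ columns are then designed so that the other $e\times e$ minors recover the remaining generators. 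In particular, the matrix degenerates to the one in \cite{Ha} when $e=2$.

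\emph{Fitting ideal and integral closure.} Since $M\subseteq R^e=M^{\ast\ast}$, the ideal $I(M)=\Fitt_0(R^e/M)$ is the ideal of $e\times e$ minors of $A$, which by design equals $I$. Because $I$ is complete, Kodiyalam's theorem \cite[Theorem 5.4]{Ko} gives $I(\bar M)=\bar{I(M)}=I=I(M)$, and the general structural results on integrally closed modules that the paper will have developed up to this point should then be used to conclude $M=\bar M$.

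\emph{Indecomposability.} Suppose for contradiction that $M=N_1\oplus N_2$ with $e_i=\rk N_i\ge 1$ and $e_1+e_2=e$. Since $M$ has no free direct summand (which one must first check from the construction), neither does any $N_i$, and each $N_i$ is integrally closed as a direct summand of an integrally closed module. From the block-diagonal identification $M^{\ast\ast}/M\cong N_1^{\ast\ast}/N_1\oplus N_2^{\ast\ast}/N_2$ and the multiplicativity of $\Fitt_0$ on direct sums, one obtains
\[
I\;=\;I(M)\;=\;I(N_1)\cdot I(N_2),
\]
a product of two nonzero complete ideals, which is also the form Zariski's product theorem takes in this language. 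The hard part is to exclude every such factorization under the hypotheses (1) or (2). This is where the numerical/structural results flagged in the abstract must enter: they should relate the rank $e_i$ of the summand $N_i$ to the shape of its complete ideal $I(N_i)$ (for example to the number of generators, to the $x$-exponents of a minimal monomial reduction, or to the width of the Newton polygon). In case (1), the strict inequality $r\ge e+1$ provides enough "extra" generators in the staircase of $I$ so that any partition into $I(N_1)\cdot I(N_2)$ of the required shape would alter some $a_i$ or $b_i$; in case (2), the boundary conditions $x^{r-1}y\notin I$ and $x^r\in I$ eliminate the last remaining borderline factorizations. A case analysis on the pair $(e_1,e_2)$, together with the unique factorization of complete ideals into simple complete ones, then yields the contradiction.

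The main obstacle is this final step: translating the abstract factorization $I=I(N_1)\cdot I(N_2)$ into concrete monomial-level information that contradicts the hypotheses. All the structural lemmas promised by the abstract are, I expect, aimed precisely at controlling $I(N_i)$ from $\rk N_i$ with enough precision to force the desired numerical contradiction.
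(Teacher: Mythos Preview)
Your outline captures the broad architecture---construct $M$, check $I(M)=I$ and $M=\bar M$, then rule out decompositions via $I=I(N_1)I(N_2)$---but the last two steps contain genuine gaps.

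For integral closure, knowing $I(\bar M)=\bar{I(M)}=I=I(M)$ does \emph{not} by itself force $M=\bar M$: two distinct integrally closed modules can share the same Fitting ideal. The paper establishes integral closedness of the constructed module (Theorem~\ref{3.7}) by first showing it is contracted from $S=R[\fkm/(x+y)]$ and then verifying, via an explicit matrix computation, that $MS_Q$ splits as a complete ideal plus a free module for every maximal ideal $Q$ of $S$. This is not a formal consequence of $I(M)$ being complete.

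The more serious gap is in the indecomposability step. You propose to reach a contradiction from $I=I(N_1)I(N_2)$ by showing that no factorization of $I$ of the right shape exists. But in the cases of interest $I$ is typically \emph{not} simple---indeed the whole point of the theorem is to produce indecomposable modules with non-simple ideal---so plenty of factorizations of $I$ exist, and no argument of the type ``a factorization would alter some $a_i$ or $b_i$'' can succeed. The paper's mechanism is entirely different and is length-theoretic. The key numerical result (Theorem~\ref{4.4}) is the inequality
\[
\ell_R(R/I(L)) - \ell_R(L^{\ast\ast}/L) \;\ge\; \binom{\rk_R L}{2}
\]
for every integrally closed $L$, and the key property of the specific construction $M=M(I;e)$ is that it realizes \emph{equality} (Proposition~\ref{5.1}). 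If $M\cong N_1\oplus N_2$, one applies the inequality to each summand and compares with the equality for $M$. In case~(1) a structure theorem (Theorem~\ref{4.2}) first forces one summand to be $\fkm$, and the comparison yields $e\ge r$, contradicting $r\ge e+1$. In case~(2) one instead invokes a strict colength inequality for products of complete monomial ideals under the hypotheses $x^r\in I$, $x^{r-1}y\notin I$ (Lemma~\ref{5.5}), and the comparison becomes $\binom{e}{2}>\binom{e_1}{2}+\binom{e_2}{2}+e_1e_2=\binom{e}{2}$, a contradiction. None of this is a case analysis on simple factors or on Newton polygons of the $I(N_i)$.

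Two smaller points: the paper's matrix has $r+e$ columns (so $\mu_R(M)=r+e$), not $r+1$, and involves no generic coefficients; and the hypothesis $a_{r-e+2}=e-2$ is precisely what makes $I(M)=I$ rather than a strictly larger monomial ideal (Proposition~\ref{3.2}).
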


This immediately implies both Theorem \ref{prev} and the last assertion of Theorem \ref{kodi}. 
Indeed, if we take $e=2$, then we can readily get Theorem \ref{prev} (see Corollary \ref{6.2}). If we consider a simple complete monomial ideal, e.g. $I=\bar{\langle x^{r}, y^{r+1} \rangle}$, we can readily get 
the last assertion of Theorem \ref{kodi} (see Corollary \ref{6.4}).  
Moreover, we have the following as a direct consequence of Theorem \ref{main}. 

\begin{Corollary}
For any given integer $e \geq 2$, there exists a large class of indecomposable integrally closed $R$-modules $M$ of rank $e$ with non-simple ideal $I(M)$.
\end{Corollary}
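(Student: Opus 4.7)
The plan is to deduce this corollary directly from Theorem \ref{main} by exhibiting, for each $e \geq 2$, an infinite family of non-simple complete monomial ideals $I$ of the form $(\ref{completemono})$ that satisfy both the numerical condition $a_{r-e+2} = e-2$ and one of the two alternatives (1) or (2) of Theorem \ref{main}.

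The most transparent such family consists of the powers of the maximal ideal: for each integer $n \geq e+1$, set $I_n := \fkm^n = \langle x^{n-i} y^i \mid 0 \leq i \leq n \rangle$. In the notation of $(\ref{completemono})$ this gives $r = n$, $a_i = n-i$, $b_i = i$, and in particular $a_0 = n = b_r$, so the ideal is presented in the required form. A direct computation yields
$$a_{r-e+2} \;=\; n-(n-e+2) \;=\; e-2,$$
and $r = n \geq e+1$ verifies alternative (1). Since $\fkm^n = \fkm \cdot \fkm^{n-1}$ is a product of two proper ideals, it is non-simple. Theorem \ref{main} consequently produces, for every $n \geq e+1$, a finitely generated torsion-free indecomposable integrally closed $R$-module $M_n$ of rank $e$ with $I(M_n) = \fkm^n$; these modules are pairwise non-isomorphic since their associated ideals are distinct. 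This single family already furnishes infinitely many examples of the desired type.

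To justify the adjective \emph{large}, I would further remark that one may vary the ideal well beyond the powers of $\fkm$: any non-simple complete monomial ideal whose exponent sequence satisfies $a_{r-e+2} = e-2$ together with (1) or (2) qualifies. For instance, products of the form $\fkm^k \cdot J$, where $J$ is a simple complete monomial ideal chosen so that the resulting exponent sequence still fulfils the numerical condition, remain complete by Zariski's product theorem, remain non-simple by construction, and feed into Theorem \ref{main} to yield further indecomposable integrally closed modules of rank $e$. There is no genuine obstacle here: the argument is a direct application of Theorem \ref{main} to explicit ideals, and the only mild bookkeeping is to check that the numerical constraint $a_{r-e+2} = e-2$ is compatible with a non-trivial factorization into proper monomial ideals, which is immediate for $\fkm^n$ and straightforward to arrange in greater generality.
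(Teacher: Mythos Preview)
Your proposal is correct and follows essentially the same approach as the paper: both deduce the corollary by feeding an explicit family of non-simple complete monomial ideals satisfying $a_{r-e+2}=e-2$ into Theorem~\ref{main}. The only difference is the choice of example---the paper uses $I=\prod_{i=1}^r \bar{\langle x, y^{i+1} \rangle}$ while you use $I=\fkm^n$ for $n\geq e+1$---but your verification of the hypotheses is straightforward and complete.
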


Indeed, if we consider a non-simple complete monomial ideal $I$ and an integer $e$ satisfying $a_{r-e+2}=e-2$, e.g. $I=\prod_{i=1}^r \bar{\langle x, y^{i+1} \rangle}$ where $r \geq 2$, 
then Theorem \ref{main} shows that one can find an 
indecomposable integrally closed $R$-module $M$ of rank $e$ with $I(M)=I$. 
These modules are obtained quite explicitly 
from a given complete monomial ideal $I$ in (\ref{completemono}).

We explain the organization of this paper and the idea 
of our proof of Theorem \ref{main}. 
In section 2, we review basic facts on integrally closed $R$-modules from \cite{Ko} and on complete monomial ideals. 
In section 3, we introduce a certain module of rank $e$, denoted by $M(I;e)$, associated to a given monomial ideal $I$ 
and an integer $e$ (see Definition \ref{3.1}). 
The modules $M(I;e)$ play a central role in the proof of Theorem \ref{main}. 
In section 4, we prove two theorems on integrally closed $R$-modules which are of interest in its own right. 
One is a structure theorem (Theorem \ref{4.2}), and another is a numerical result (Theorem \ref{4.4}). 
These are used in the proof of  indecomposability of $M(I;e)$.   
In section 5, we investigate the indecomposability of $M(I;e)$. We first consider the case when $e \leq r-1$ and show that $M(I;e)$ is indecomposable if it is integrally closed (see Theorems \ref{3.7} and \ref{5.2}). 
Next we consider the case when $e=r$ and show the indecomposability of $M(I;e)$ under a certain additional condition (see Theorem \ref{5.4}). In section 6, we summarize our results and get Theorem \ref{main}. We give some consequences and examples. 

Throughout this paper, 
let $(R, \fkm)$ be a two-dimensional regular local ring with the maximal ideal 
$\fkm$, infinite residue field $R/\fkm$. 
For an ideal $\fka$ in $R$, 
the order of 
$\fka$ will be denoted by $\ord(\fka)=\max \{n \mid \fka \subset \fkm^{n} \}$. 
For an $R$-module $L$, the notations $\rank_{R}(L)$, $\mu_{R}(L)$ and $\ell_{R}(L)$ will 
denote respectively the rank, the minimal number of generators and the length of $L$. Let $L^{\ast}=\Hom_R(L, R)$ denote the $R$-dual of $L$. We will use both the term ``integrally closed'' and the classical one ``complete'' for ideals. 

\begin{Notation}
Let $A$ be an arbitrary Noetherian ring and let $A^n=At_1+\dots +At_n$ be a free $A$-module of rank $n>0$ 
with free basis $t_1, \dots , t_n$. 
\begin{itemize}
\item For a submodule $L=\langle f_1, \dots , f_m \rangle$ of $A^n$ generated by $f_1, \dots , f_m$, we denote the associated matrix
$$\tilde L:=(a_{ij}) \in \Mat_{n \times m}(A)$$
where $f_j=a_{1j}t_1+\dots +a_{nj}t_n$ for $j=1, \dots , m$. 
\item Conversely, for a matrix $\varphi=(b_{ij}) \in \Mat_{n \times m}(A)$, we denote the associated submodule of $A^n$
$$\langle \varphi \rangle=\langle g_1, \dots , g_m \rangle$$
where $g_j=b_{1j}t_1+\dots +b_{nj}t_n$ for $j=1, \dots , m$.
\item For two matrices $\varphi \in \Mat_{n \times m}(A)$ and $\psi \in \Mat_{n \times m'}(A)$ with the same number of rows, we define a relation $\sim$ as 
$$\varphi \sim \psi \Leftrightarrow \langle \varphi \rangle \cong \langle \psi \rangle \ \text{as $A$-modules}$$
\item For a matrix $\varphi=(b_{ij}) \in \Mat_{n \times m}(A)$, we denote the ideal in $A$ 
generated by all the $t$-minors of $\varphi$ 
$$I_{t}(\varphi)$$
\end{itemize}
\end{Notation}

\section{Preliminaries}

In this section, we review some basic facts from \cite{Ko} 
on integrally closed modules over $R$. 
For the details on a theory of complete ideals in $R$, see \cite{Hu, HuSw, ZS}. For the details on a theory of integral closure for modules, see \cite{Rees}.  

Let $M$ be a finitely generated torsion-free $R$-module, and 
let $F=M^{**}$ be the double $R$-dual of $M$. 
Then $F$ is $R$-free and it canonically contains $M$ with the quotient $F/M$ of finite length. 
Indeed, one can show that if $M$ is contained in a free $R$-module $G$ 
with the quotient $G/M$ of finite length, 
then there is a unique $R$-linear isomorphism $\varphi: F \to G$ that 
restricts to the identity map on $M$ (\cite[Proposition 2.1]{Ko}).
Thus, the two quotients $F/M \cong G/M$ are isomorphic as $R$-modules. 
In fact, $F/M$ is isomorphic to the $1$st local cohomology module $H^1_{\fkm}(M)$ of $M$ 
with respect to $\fkm$. Therefore, one can define ideals associated to $M$ as follows: 
\begin{align*}
I(M) &=\Fitt_0(F/M) \\
I_1(M) &= \Fitt_1(F/M)   
\end{align*}
where $\Fitt_i( \ast )$ denotes the $i$th Fitting ideal.

The notion of integral closure for a module was defined by Rees in \cite{Rees}. 
Let $A$ be an arbitrary Noetherian integral domain with the quotient field $K$. For an $A$-module $M$, let $M_K=M \otimes_R K$. A subring $S$ of $K$ containing $A$ is called birational overring of $A$. For 
such a ring $S$, let 
\begin{align*}
    MS&=\Im (M \otimes_A S \to M_K)
\end{align*}
denote an $S$-submodule of $M_K$ generated by $M$, 
which is isomorphic to the tensor product $M \otimes_{A} S$ modulo $S$-torsion. 
With this notation, the integral closure $\bar M$ is defined as follows: 
\begin{align*}
    \bar M&=\{f \in M_K \mid f \in MV \ \text{for every discrete valuation ring} \ 
V \ \text{with} \ A \subset V \subset K \}. 
\end{align*}
\noindent
The set $\bar M$ is an $A$-module containing $M$. 
$M$ is said to be integrally closed if $\bar M=M$. 
Since $R$ is a two-dimensional regular local ring, and, hence, it is normal, 
the integral closure $\bar M$ can be considered in the free module $F$ (see \cite[Theorem 1.8]{Rees}). 
Moreover, we have the following useful criteria for integral dependence of a module (see \cite{Rees} and also \cite[Theorem 3.2]{Ko}).  
\begin{align}
\bar M &= \{f \in F \mid f \in \Sym_R^1(F) \ \text{is integral over} \ \Sym_R(M) \}\label{eqcri} \\
&=\{f \in F \mid I(M+Rf) \subset \bar{I(M)}\}.\label{detcri}
\end{align}
The first one is called equational criterion, and the second one is called determinantal criterion. 
These criteria imply the following property. 
\begin{Proposition}$($\cite[Corollary 3.3]{Ko}\label{2.1}$)$
Let $M$ be a finitely generated torsion-free $R$-module.
Then $M$ is integrally closed if and only if so is $M_Q$
for every maximal ideal $Q$ of $R$. 
\end{Proposition}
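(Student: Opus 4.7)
Since the blanket hypothesis of the paper is that $(R,\fkm)$ is itself local, the unique maximal ideal of $R$ is $\fkm$ and $M_\fkm=M$, so the biconditional is literally a tautology in this paper's setting. The proposition is therefore really a verbatim reproduction of Kodiyalam's Corollary 3.3, which he formulated for a Noetherian normal domain in general; the plan below sketches how one would prove it in that greater generality, which is the interesting content.

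The proof splits into two ingredients. The first, and main, ingredient is that integral closure of a finitely generated torsion-free module commutes with localization at any prime $Q$: as submodules of $F_Q$ one has $\overline{M_Q}=(\bar M)_Q$. To see this I would invoke the equational criterion (\ref{eqcri}). An element $f\in F$ lies in $\bar M$ iff, viewed as a degree-one element of $\Sym_R(F)$, it satisfies a monic relation $f^n+a_1f^{n-1}+\cdots+a_n=0$ with $a_i\in\Sym_R^i(M)$. Since localization is exact and commutes with $\Sym$, any such relation descends to $R_Q$, yielding $(\bar M)_Q\subseteq\overline{M_Q}$. Conversely, a monic relation for $f/s$ over $\Sym_{R_Q}(M_Q)$ can be cleared of denominators, producing, after multiplying through by a suitable element $u\notin Q$, a monic relation for $uf$ over $\Sym_R(M)$, which gives the reverse inclusion.

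The second ingredient is the standard identity $M=\bigcap_{Q\in\Max R}M_Q$ for any submodule of a finitely generated $R$-module; in the present situation $F/M$ has finite length, so the intersection is taken over the finite support of $F/M$. Combining the two ingredients, if $M$ is integrally closed then $M_Q=(\bar M)_Q=\overline{M_Q}$, so each $M_Q$ is integrally closed; conversely, if every $M_Q$ is integrally closed, then $\bar M\subseteq\bigcap_Q(\bar M)_Q=\bigcap_Q\overline{M_Q}=\bigcap_QM_Q=M$, forcing $\bar M=M$. The main obstacle is the careful verification of the compatibility between integral closure and localization, since the original definition of $\bar M$ is through an infinite intersection over DVRs; the algebraic equational criterion reduces this to standard clearing-of-denominators arguments. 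As a cross-check one may instead invoke the determinantal criterion (\ref{detcri}) and exploit the fact that Fitting ideals and integral closure of ideals in a normal domain both behave well under localization.
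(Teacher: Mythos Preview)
The paper does not prove this proposition at all; it is quoted verbatim from Kodiyalam and preceded only by the remark ``These criteria imply the following property.'' There is therefore no in-paper argument to compare against. Your observation that, under the paper's standing hypothesis that $(R,\fkm)$ is local, the statement is a tautology is entirely correct and worth making explicit; indeed the only place the paper actually \emph{uses} Proposition~\ref{2.1} is in the proof of Theorem~\ref{3.7}, where it is applied not to $R$ but to the non-local overring $S=R[\tfrac{\fkm}{x+y}]$. So the content of the result really does lie in the more general setting you describe.

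Your sketch for that general setting is sound and is exactly the approach the paper's one-line hint points to: use the equational criterion (\ref{eqcri}) to show $\overline{M_Q}=(\bar M)_Q$, then conclude via the local--global identity $M=\bigcap_Q M_Q$. The clearing-of-denominators step is slightly more delicate than you indicate (one must arrange the power of $u$ so that the resulting relation stays monic, and use that $\Sym_R(F)$ is a domain so that an equality in the localization already holds in $\Sym_R(F)$), but this is routine and your plan is correct. The alternative route you mention via the determinantal criterion (\ref{detcri}) would also work and is arguably closer in spirit to how Kodiyalam himself argues, since Fitting ideals localize on the nose.
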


As in the ideal case, the notion of
contracted modules plays an important 
role in a theory of integrally closed modules over $R$. 

\begin{Definition}$($\label{2.2}\cite[p.3554]{Ko}$)$
Let $S$ be a birational overring of $R$. 
Then a finitely generated 
torsion-free $R$-module $M$ 
is said to be contracted from $S$, 
if the equality 
$$MS \cap F=M$$ 
holds true as submodules of $FS$. 
\end{Definition}

Here are some basic properties of contracted modules. 

\begin{Proposition}\label{2.3}
Let $M$ be a finitely generated torsion-free $R$-module. 
\begin{enumerate}
    \item$($\cite[Proposition 2.2]{Ko}$)$ The following inequality holds true. 
    $$ \mu_R(M) \leq \ord(I(M))+\rank_R(M) $$
    \item$($\cite[Proposition 2.5 and Theorem 2.8]{Ko}$)$ The following are equivalent. 
    \begin{enumerate}
        \item The equality $\mu_R(M) = \ord(I(M))+\rank_R(M)$ holds true. 
        \item $M$ is contracted from $S=R[ \frac{\fkm}{x} ]$ for some $x \in \fkm \setminus \fkm^2$. 
        \item $M$ is contracted from $S=R[ \frac{\fkm}{x} ]$ for any $x \in \fkm \setminus \fkm^2$ such that $$\ell_R(R/[I(M)+\langle x \rangle])=\ord(I(M)). $$
    \end{enumerate}
When this is the case, $I(M)$ is also contracted from $S=R[ \frac{\fkm}{x} ]$ for some $x \in \fkm \setminus \fkm^2$.
\end{enumerate}
\end{Proposition}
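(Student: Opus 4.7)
The plan is to exploit the minimal free resolution of $F/M$. Write $e=\rank_R(M)$, $F=M^{\ast\ast}\cong R^e$, and $\mu=\mu_R(M)$. A choice of minimal generators of $M$ together with a basis of $F$ produces a matrix $\varphi\in\Mat_{e\times\mu}(R)$ with $\langle\varphi\rangle=M$ and $I(M)=\Fitt_0(F/M)=I_e(\varphi)$. Since $F/M$ has finite length, Auslander--Buchsbaum gives $\projdim_R(F/M)=2$, so the presentation extends to a minimal free resolution
$$0\to R^{\mu-e}\to R^\mu\xrightarrow{\varphi}R^e\to F/M\to 0.$$
Thus $\mu\geq e$, and $(1)$ reduces to showing $\ord(I_e(\varphi))\geq\mu-e$.

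For $(1)$, let $r=\dim_k((M+\fkm F)/\fkm F)$, the rank of $\varphi$ modulo $\fkm$. By a change of basis of $F$ and column operations, $\varphi$ can be brought to the block form $\begin{pmatrix}I_r & 0 \\ 0 & \psi\end{pmatrix}$, where $\psi$ is an $(e-r)\times(\mu-r)$ matrix with all entries in $\fkm$ and cokernel still $F/M$. Since $I_e(\varphi)=I_{e-r}(\psi)$, it suffices to treat the case $M\subseteq\fkm F$. I would then specialize modulo a general superficial element $x\in\fkm\setminus\fkm^2$ with $\ell_R(R/(I(M)+(x)))=\ord(I(M))$. Since $R/(x)$ is a DVR and $x$ is a non-zero-divisor on $M$, the structure theorem gives $M/xM\cong(R/(x))^e\oplus T$ with $T$ torsion, and Nakayama yields $\mu_R(M)=e+\mu_R(T)$. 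The local cohomology long exact sequence identifies $T$ with the kernel of multiplication by $x$ on $F/M\cong H^1_\fkm(M)$, and the superficial choice forces $\mu_R(T)\leq\ord(I(M))$, yielding $(1)$.

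For $(2)$, (c)$\Rightarrow$(b) is immediate since superficial $x$ exist ($R/\fkm$ being infinite). For (b)$\Rightarrow$(a), I would exploit that $\fkm S=xS$ is principal in $S=R[\fkm/x]$, so $MS$ is minimally $S$-generated by $e+\ord(I(M))$ elements (the $e$ coming from a basis of $F$, the rest controlled by $I(M)S$); the hypothesis $MS\cap F=M$ transports this count back to $\mu_R(M)$, forcing the equality in $(1)$. For (a)$\Rightarrow$(c), I would apply Zariski's classical contraction theorem to the ideal $I(M)$: the equality $\mu_R(I(M))=\ord(I(M))+1$ follows from (a) by a comparison of Fitting ideals, and Zariski's theorem then asserts that $I(M)$ is contracted from $R[\fkm/x]$ for every $x$ satisfying the length condition in (c). The contractedness lifts from $I(M)$ to $M$ via the matrix presentation by $\varphi$: running $(1)$ for the intermediate module $MS\cap F$ establishes $\mu_R(MS\cap F)=\mu_R(M)$, which combined with $MS\cap F\supseteq M$ forces $MS\cap F=M$. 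The final assertion on $I(M)$ itself is built into Zariski's theorem.

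The main obstacle is the lifting step in (a)$\Rightarrow$(c): transferring contractedness from the ideal $I(M)$ to the module $M$ requires careful control of the Fitting ideal under the flat extension $R\subseteq S$ and the subsequent contraction back to $F$. This is the technical heart of Kodiyalam's argument and depends essentially on the two-dimensional regular hypothesis.
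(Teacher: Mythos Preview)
The paper does not supply its own proof of this proposition; it simply records the statements with citations to \cite[Proposition~2.2, Proposition~2.5, Theorem~2.8]{Ko}. Your outline for part~(1) is essentially Kodiyalam's argument and is fine as a sketch.

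For part~(2), your route through (a)$\Rightarrow$(c) has a genuine gap in the lifting step. You write: ``running~(1) for the intermediate module $MS\cap F$ establishes $\mu_R(MS\cap F)=\mu_R(M)$, which combined with $MS\cap F\supseteq M$ forces $MS\cap F=M$.'' Neither clause holds. Setting $N=MS\cap F$, part~(1) applied to $N$ gives only $\mu_R(N)\le\ord(I(N))+e\le\ord(I(M))+e=\mu_R(M)$ (using $I(M)\subseteq I(N)$), so at best $\mu_R(N)\le\mu_R(M)$; the reverse inequality does not follow from $M\subseteq N$ (consider $\fkm^2\subset\fkm$). More seriously, even if $\mu_R(N)=\mu_R(M)$ were established, an inclusion $M\subseteq N$ of torsion-free modules with the same minimal number of generators does not force $M=N$: already for rank~$1$, take $M=\langle x^2,y\rangle\subsetneq\langle x,y\rangle=N$ in $R$, where both have two generators. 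So contractedness of $M$ cannot be extracted from a generator count alone, and the step ``$\mu_R(MS\cap F)=\mu_R(M)\Rightarrow MS\cap F=M$'' fails.

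Kodiyalam's actual argument for (a)$\Rightarrow$(c) does not pass through $I(M)$ first. It works directly with the description of $MS\cap F$ as an iterated colon $\bigcup_n(\fkm^nM:_F x^n)$ and uses the structure of $M/xM$ over the DVR $R/(x)$ (the same structure you already invoke for~(1)) to show that when $\mu_R(M)=\ord(I(M))+e$ this colon stabilizes immediately at $M$. The final assertion that $I(M)$ is contracted is then a \emph{consequence} of $M$ being contracted (Fitting ideals commute with the flat base change to $S$ and with contraction back to $R$), not an input. Your approach inverts this logical order, and the bridge back from $I(M)$ to $M$ does not go through as written; the unjustified phrase ``by a comparison of Fitting ideals'' for $\mu_R(I(M))=\ord(I(M))+1$ is a symptom of the same inversion.
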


\begin{Proposition}$($\cite[Proposition 4.3]{Ko}$)$\label{2.4}
Let $M$ be a finitely generated torsion-free $R$-module. If $M$ is integrally closed, then $M$ is contracted from $S=R[ \frac{\fkm}{x} ]$ for some $x \in \fkm \setminus \fkm^2$. 
\end{Proposition}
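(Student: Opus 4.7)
The plan is to exhibit a specific $x\in\fkm\setminus\fkm^2$ for which the equality $MS\cap F=M$ holds with $S=R[\fkm/x]$. Since $F/M$ has finite length, the ideal $I(M)=\Fitt_0(F/M)$ is $\fkm$-primary, and the infinite residue field hypothesis permits a generic choice of $x\in\fkm\setminus\fkm^2$ with
$$\ell_R\bigl(R/(I(M)+xR)\bigr)=\ord(I(M)).$$
This is precisely the transversality condition in Proposition~\ref{2.3}(2)(c), so once contractedness is established for this $x$, Proposition~\ref{2.3} will simultaneously upgrade the inequality $\mu_R(M)\le\ord(I(M))+\rank_R(M)$ to equality.

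To verify $MS\cap F\subseteq M$, I would take $f\in MS\cap F$. Unwinding $S=R[\fkm/x]$ produces an integer $N\ge 0$ with $x^N f\in\fkm^N M$. Since $M=\bar M$, it suffices to prove $f\in\bar M$, and by the determinantal criterion~(\ref{detcri}) this amounts to showing the containment
$$I(M+Rf)\subseteq\bar{I(M)}.$$
The opposite inclusion $I(M)\subseteq I(M+Rf)$ is automatic because $F/(M+Rf)$ is a quotient of $F/M$; what requires work is that, modulo integral closure, the genericity of $x$ must prevent $I(M+Rf)$ from overflowing $\bar{I(M)}$.

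The main obstacle is exactly this Fitting-ideal containment. I would attack it through the short exact sequence
$$0\longrightarrow M\longrightarrow M+Rf\longrightarrow R/(M:_R f)\longrightarrow 0,$$
in which $x^N$ lies in $M:_R f$, by comparing the top-order minors of a presentation matrix of $M+Rf$ against those of $M$. The point is to argue that any additional minors produced by adjoining $f$ are, after integral closure, multiples of elements already in $I(M)$. Here the transversality of $x$ to $I(M)$ is the essential lever, and the two-dimensional regularity of $R$ intervenes decisively because it ensures that $\fkm$-powers and products with complete $\fkm$-primary ideals remain well-controlled under the generic hyperplane section $x$; these are the features that underlie Zariski's classical contractedness result in the ideal case and that must be transplanted to the module setting here.
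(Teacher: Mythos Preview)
The paper does not give its own proof of this proposition: it is quoted verbatim from Kodiyalam \cite[Proposition~4.3]{Ko} and used as a black box. So there is nothing in the present paper to compare your argument against, and your proposal must be judged on its own.

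Your strategy is sound in outline. Choosing $x$ transverse to $I(M)$, taking $f\in MS\cap F$, writing $x^Nf\in\fkm^NM$, and then appealing to the determinantal criterion~(\ref{detcri}) is a legitimate route, and it does reduce the module statement to the classical Zariski contractedness result for the \emph{ideal} $\bar{I(M)}$. But the proposal stops precisely where the actual computation begins. You say you would ``compare top-order minors'' and that the new minors ``are, after integral closure, multiples of elements already in $I(M)$,'' yet you never carry this out. The concrete step you are missing is this: if $x^Nf=\sum a_i m_i$ with $a_i\in\fkm^N$ and $m_i\in M$, then every maximal minor $\Delta$ of $[\tilde M\mid f]$ that uses the column $f$ satisfies
\[
x^N\Delta=\det[\,\cdots\,|\,x^Nf\,]=\sum a_i\det[\,\cdots\,|\,m_i\,]\in\fkm^N I(M),
\]
so $\Delta\in I(M)S\cap R\subseteq \bar{I(M)}S\cap R$. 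One then needs that $\bar{I(M)}$ is contracted from this particular $S$, which is Zariski's theorem for complete ideals together with the observation that transversality of $x$ to $I(M)$ forces transversality to $\bar{I(M)}$ (since $\ord$ is preserved under integral closure). None of this is in your write-up; what you have is a plan, not a proof.

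A second point: your aside that ``once contractedness is established for this $x$, Proposition~\ref{2.3} will simultaneously upgrade the inequality $\mu_R(M)\le\ord(I(M))+\rank_R(M)$ to equality'' is correct but irrelevant to the task. That equality is a \emph{consequence} of contractedness, not an ingredient in proving it, so it plays no role here.
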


Consider a birational overring $S=R[\frac{\fkm}{x}]$ of $R$ 
where $x \in \fkm \setminus \fkm^2$. Then one can show that for any maximal ideal $Q$ of $R$, 
\begin{itemize}
\item $S_Q$ is a discrete valuation ring if $Q \nsupseteq \fkm S$. 
\item $S_Q$ is a two-dimensional regular local ring if $Q \supseteq \fkm S$. 
\end{itemize}
The two-dimensional regular local ring $S_Q$ for a maximal ideal $Q$ of $S$ containing $\fkm S$ is called a first quadratic 
transform of $R$. For an $\fkm$-primary ideal $I$ in $R$ with $\ord(I)=r$, we can write 
$IS=x^r[IS:_S x^r]. $
Then we define the ideal $I^{S}$ as 
$$I^S=IS:_S x^r$$ 
and call it a transform of $I$ in $S$. For a first quadratic transform $T:=S_Q$ of $R$, 
we also define a transform $I^{T}$ of $I$ in $T$ as 
$$I^T=I^S T. $$ 

Contracted modules have the following nice property: 

\begin{Proposition}\label{2.5}
Let $M$ be a finitely generated torsion-free $R$-module. 
Suppose that $M$ is contracted from $S=R[ \frac{\fkm}{x} ]$ for some 
$x \in \fkm \setminus \fkm^2$. 
Then the following are equivalent. 
\begin{enumerate}
\item $M$ is an integrally closed $R$-module. 
\item $MS$ is an integrally closed $S$-module. 
\end{enumerate}
When this is the case, 
for any first quadratic transform $T$ of $R$, $MT$ is an integrally closed $T$-module.  
\end{Proposition}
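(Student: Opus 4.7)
The plan is to prove (2) $\Rightarrow$ (1) directly from the valuative criterion, and to derive (1) $\Rightarrow$ (2) together with the last assertion by localizing on $\Spec S$ and reducing to a claim about first quadratic transforms.

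For (2) $\Rightarrow$ (1), take $f \in \bar M \subset F$. The valuative criterion gives $f \in MV$ for every DVR $V$ with $R \subset V \subset K$, in particular for every DVR $V$ with $S \subset V \subset K$ (these form a subset of the former), so $f \in \overline{MS}$. By hypothesis (2), $\overline{MS} = MS$, and then the contractedness of $M$ from $S$ forces $f \in MS \cap F = M$, giving $\bar M = M$.

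For (1) $\Rightarrow$ (2), I would apply the standard localization principle for integral closure of torsion-free modules (the analogue of Proposition \ref{2.1} valid over the normal Noetherian ring $S$): $MS$ is integrally closed over $S$ if and only if $(MS)_Q$ is integrally closed over $S_Q$ for every maximal ideal $Q$ of $S$. When $Q \not\supseteq \fkm S$, $S_Q$ is a DVR and $(MS)_Q$ is free, hence automatically integrally closed. When $Q \supseteq \fkm S$, the ring $T := S_Q$ is a first quadratic transform of $R$ and $(MS)_Q = MT$; thus (1) $\Rightarrow$ (2) and the last assertion of the proposition both reduce to showing that $MT$ is integrally closed over $T$.

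The main obstacle is this final claim. I would approach it via the determinantal criterion (\ref{detcri}) combined with Kodiyalam's theorem $I(\bar M) = \overline{I(M)}$ and Zariski's theorem on transforms of complete $\fkm$-primary ideals. Under hypothesis (1), $I(M)$ is complete; Zariski's transform theorem then ensures that the transform $I(M)^T$ — to which $I_T(MT)$ is equal up to the principal base-point factor $x^{\ord(I(M))}$ — is complete in $T$. For $g \in \overline{MT}$, the determinantal criterion gives $I_T(MT + Tg) \subset \overline{I_T(MT)} = I_T(MT)$. Combined with the numerical contractedness $\mu_T(MT) = \ord(I_T(MT)) + \rank_T(MT)$ inherited from $M$ via Proposition \ref{2.3} and base change through $R \to T$, this forces $g \in MT$. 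The technical heart of the proof is the transfer of integral-closure information from $R$ to $T$, which is exactly where the hypothesis that $M$ is contracted from $S$ plays a decisive role.
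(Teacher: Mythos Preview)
Your argument for (2) $\Rightarrow$ (1) is correct and is the natural contraction argument; the paper simply cites \cite[proof of Theorem 5.2]{Ko} for this direction, and what you wrote is essentially what that proof must do.

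The reduction of (1) $\Rightarrow$ (2) to the statement about first quadratic transforms, via localization on $\Spec S$, is also sound. The gap is in your final step. From the determinantal criterion you correctly obtain $I_T(MT + Tg) \subset \overline{I_T(MT)} = I_T(MT)$, hence equality of the zeroth Fitting ideals. But the implication you then invoke---that this equality, together with numerical contractedness of $MT$, forces $g \in MT$---is not valid. Distinct finite-colength submodules of $F_T$ can share the same zeroth Fitting ideal, and applying Proposition~\ref{2.3}(1) to $N = MT + Tg$ only yields $\mu_T(N) \le \ord(I_T(N)) + e = \mu_T(MT)$, which is no contradiction: a strict overmodule $N \supsetneq MT$ need not have larger minimal number of generators. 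Moreover, your auxiliary claim that $MT$ is itself contracted over $T$ (``inherited from $M$ via Proposition~\ref{2.3} and base change'') is not justified; this does not follow formally from the contractedness of $M$ over $R$, and establishing control of $MT$ is precisely part of the content of \cite[Proposition 4.6]{Ko}.

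The paper gives no self-contained argument here: it defers (1) $\Rightarrow$ (2) and the last assertion entirely to \cite[Proposition 4.6]{Ko}. Kodiyalam's proof that $MT$ is integrally closed is more structural than a one-line determinantal computation, and your sketch does not supply the missing mechanism.
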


\begin{proof}
The assertion (1) $\Rightarrow$ (2) follows from 
\cite[the proof of Proposition 4.6]{Ko}. 
The converse (2) $\Rightarrow$ (1) follows from \cite[the proof of Theorem 5.2]{Ko}. The last assertion is \cite[Proposition 4.6]{Ko}.  
\end{proof}

Therefore, for an $\fkm$-primary integrally closed ideal $I$ in $R$, 
a transform $I^T$ of $I$ in a first quadratic transform $T=S_Q$ of $R$ is also integrally closed. 
Indeed, since $I$ is complete, $IS$ is integrally closed 
by Proposition \ref{2.5}, and, hence, 
$I^{S}$ and its 
localization $I^T$ are integrally closed. 

The ideal $I(M)$ of $M$ behaves well under transforms.

\begin{Proposition}$($\cite[Proposition 4.7]{Ko}$)$\label{2.6}
Let $M$ be a finitely generated torsion-free $R$-module and $T$ a first quadratic transform of $R$. Then the equality 
$$I(MT)=I(M)^T$$
holds true. 
\end{Proposition}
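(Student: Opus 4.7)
The plan is to compare the reflexive hull $(MT)^{**}$ (used in the definition of $I(MT)$) with the $T$-expansion $FT$ of $F = M^{**}$ inside the common ambient $K$-vector space $M_K$, and to extract the desired identity from a single determinantal relation.

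First I would verify the containment $(MT)^{**} \subseteq FT$ as $T$-submodules of $M_K$. This follows because $FT$ is free and hence reflexive, so $FT = \bigcap_{\fkp}(FT)_{\fkp}$ as $\fkp$ ranges over the height one primes of the normal ring $T$, while the reflexive hull $(MT)^{**} = \bigcap_{\fkp}(MT)_{\fkp}$ by the standard description over a two-dimensional regular local ring, and $(MT)_{\fkp} \subseteq (FT)_{\fkp}$ for each $\fkp$.

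Next, tensoring the presentation $R^n \xrightarrow{\tilde M} F \to F/M \to 0$ with $T$ (which is exact at $FT$ since $F$ is free and $MT$ is by definition the image of $M \otimes_R T$ in $FT$), together with the base-change property of the zeroth Fitting ideal, yields $\Fitt_0(FT/MT) = I_e(\tilde M)\,T = I(M)\,T$. Picking a $T$-basis of $(MT)^{**}$ and expressing it in the given basis of $FT$ produces a matrix $Q \in \Mat_e(T)$ with $\det Q \ne 0$; the generators of $MT$ in the $(MT)^{**}$-basis are then the columns of $Q^{-1}\tilde M$, and the identity $I_e(Q^{-1}\tilde M) = (\det Q)^{-1} I_e(\tilde M)$ of fractional ideals of $T$ gives $I(M)\,T = \det Q \cdot I(MT)$.

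The heart of the matter, and the main obstacle, is showing $\det Q \cdot T = x^r T$ where $r = \ord(I(M))$. I would check this at each height one prime of $T$. At $\fkp = xT$, using $\fkm T = xT$ to identify the $\fkm$-adic order on $R$ with the $x$-adic valuation on the DVR $T_{xT}$, the Smith normal form of $\tilde M$ over $T_{xT}$ has invariant factors $x^{a_1}, \ldots, x^{a_e}$ with $\sum a_i = v_{xT}(I_e(\tilde M)) = \ord(I(M)) = r$, so $v_{xT}(\det Q) = r$. For a height one prime $\fkp \ne xT$, since $F/M$ has finite length the ideal $I(M)$ is $\fkm$-primary, and is therefore not contained in any prime of $R$ strictly smaller than $\fkm$; combined with the observation that $\fkp \cap R = \fkm$ would force $xT \subseteq \fkp$ and hence $\fkp = xT$, this implies $I(M) \not\subseteq \fkp$, so $I(M)T \cdot T_{\fkp} = T_{\fkp}$ and thus $(MT)_{\fkp} = (FT)_{\fkp}$, giving $v_{\fkp}(\det Q) = 0$. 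Combining, $\det Q = u\,x^r$ for some unit $u \in T^{\times}$, and substitution into the displayed relation yields $I(MT) = I(M)\,T / x^r = I(M)^T$ by the very definition of the transform.
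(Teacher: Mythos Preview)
The paper does not actually prove this proposition; it is quoted verbatim from Kodiyalam \cite[Proposition~4.7]{Ko} without argument, so there is no in-paper proof to compare yours against.

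Your argument is correct and follows a natural line. A couple of places could be tightened. First, the assertion that the $x$-adic valuation $v_{xT}$ on $T_{xT}$ restricts to the $\fkm$-adic order on $R$ is the crux of the computation at $\fkp=xT$; it is true (since $\fkm T=xT$ forces $v_{xT}|_R$ to be a discrete valuation centered at $\fkm$ with value~$1$ on $\fkm$, and $\ord_\fkm$ is the unique such), but you should say this explicitly rather than leave it implicit. Second, your treatment of height-one primes $\fkp\ne xT$ is a little roundabout: the shortest route is to observe that $FT/MT\cong (F/M)\otimes_R T$ is annihilated by a power of $\fkm T=xT$, hence supported inside $V(xT)$, so $(MT)_\fkp=(FT)_\fkp$ immediately. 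With these two clarifications the proof is complete; the final substitution $I(MT)=(\det Q)^{-1}I(M)T=x^{-r}I(M)T=I(M)^T$ is exactly the defining relation $I(M)T=x^{r}\,I(M)^T$ of the transform.
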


We recall some necessary facts on 
the integral closure of general monomial ideals (not necessarily in a polynomial ring over a field). 
We refer the readers to \cite{HbSw, HuSw, KiSt} for more results and the details on general monomial ideals. 

Let $\fka$ be an $\fkm$-primary monomial ideal in $R$ with respect to 
a regular system of parameters $x, y$. Suppose that $\fka$ is generated by a set of 
monomials $\{ x^{v_{i}}y^{w_{i}} \mid 1 \leq i \leq s \}$. Then, as in the usual monomial ideal case, 
one can define the Newton polyhedron ${\rm NP}(\fka)$ of $\fka$ as a convex hull of a set of exponent vectors of 
$\fka$ in $\mathbb R^2$. Namely, 
$${\rm NP}(\fka)=\left\{ (u_{1}, u_{2}) \mid (u_{1}, u_{2}) \geq 
\sum_{i=1}^{s} c_{i}(v_{i}, w_{i}) \ \text{for some} \ c_i \geq 0 \ \text{with} \ \sum_{i=1}^{s} c_{i}=1 \right\}. $$
Then the integral closure $\bar{\fka}$ of $\fka$ can be described as 
\begin{equation}\label{newton}
    \bar{\fka}=\langle 
x^{e_{1}}y^{e_{2}} \mid (e_{1}, e_{2}) \in \mathbb Z_{\geq 0}^2 \cap {\rm NP}(\fka) \rangle. 
\end{equation}
Thus, $\bar \fka$ is again a monomial ideal with respect to $x, y$. 

Let $\{(p_{i}, q_{i}) \mid 0 \leq i \leq t \}$ be a set of the vertices of ${\rm NP}(\fka)$ with 
$p_{0}>p_{1}>\dots >p_{t}=0$ and $q_{0}=0<q_{1}<\dots <q_{t}$. 
Then, by the description (\ref{newton}) of $\bar{\fka}$, it follows that 
$$\bar{\fka}=\bar{\langle x^{p_{i}}y^{q_{i}} \mid 0 \leq i \leq t \rangle}. $$
Moreover, one can see that 
$$\bar{\fka}=\prod_{i=1}^t \bar{ \langle x^{p_{i-1}-p_{i}}, y^{q_{i}-q_{i-1}} \rangle}. $$
Here we note that for a pair of positive integers $p', q'$ with ${\rm gcd}(p', q')=d$, 
$$\bar{\langle x^{p'}, y^{q'} \rangle}=\bar{\langle x^{p}, y^{q} \rangle}^{d}$$
where $p'=dp$ and $q'=dq$, and that for any $p, q>0$ with ${\rm gcd}(p, q)=1$, 
$$\bar{ \langle x^p, y^q \rangle} \ \text{is simple}.  $$
See \cite{GrKi} for more details on special simple ideals. 

Here is an example to illustrate this. 

\begin{Example}\label{exmono}
{\rm 
Let $$I=\langle x^8, x^6y, x^3y^2, x^2y^3, xy^4, y^8 \rangle$$
be a monomial ideal with respect to a regular system of parameters $x,y$ for $R$. Then the Newton polyhedron ${\rm NP}(I)$ is given in Figure \ref{fig1}, and $I$ is complete. The set of its vertices is 
$$\{(8,0), (3,2),(1,4),(0,8) \}$$
which is denoted by dots in Figure \ref{fig1}. Hence,  
\begin{align*}
    I&=\bar{\langle x^5, y^2 \rangle} \ \bar{\langle x^2, y^2} \rangle \langle x, y^4 \rangle \\
    &=\bar{\langle x^5, y^2 \rangle} \langle x, y \rangle^2 \langle x, y^4 \rangle. 
\end{align*}
}
\end{Example}

\begin{figure}[H]
\centering
\includegraphics[scale=0.7]{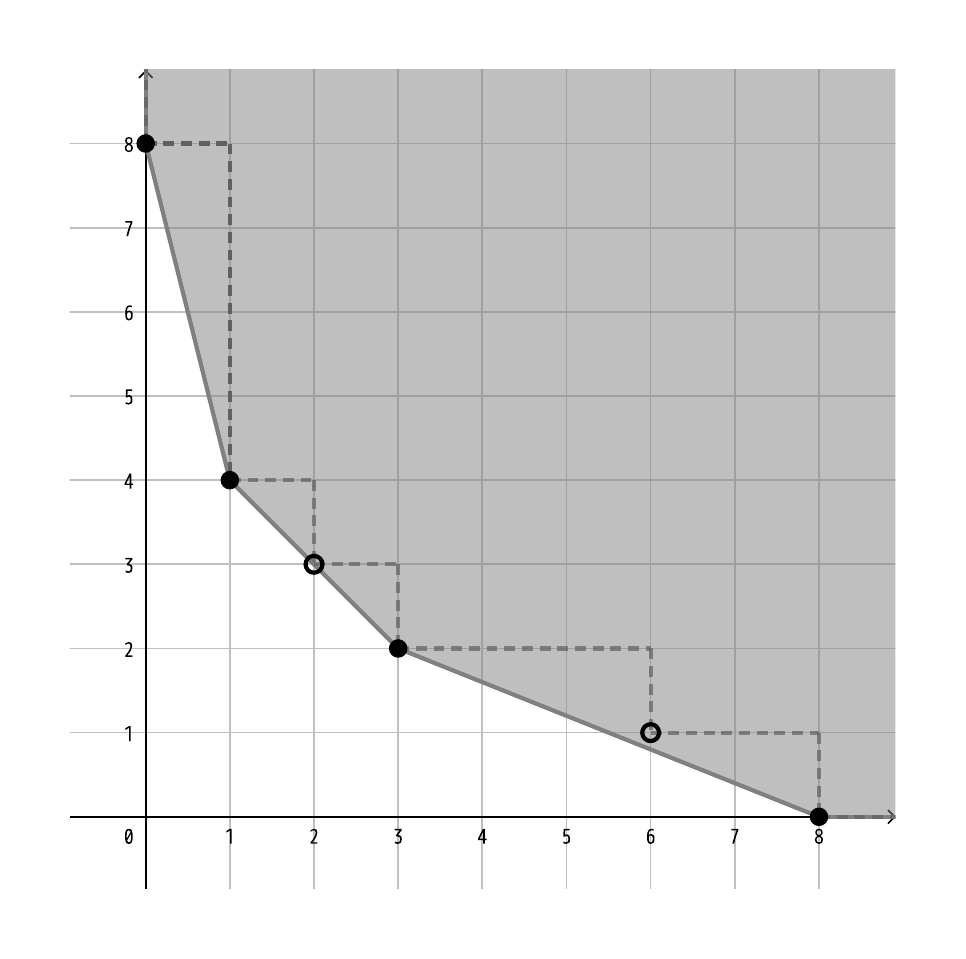}
\caption{}\label{fig1}
\end{figure}

Namely, for any $\fkm$-primary complete monomial ideal $\fka$ in $R$ with respect to $x, y$, 
every simple factor of $\bar{\fka}$ is a monomial ideal with the following special form: 
$$\bar{ \langle x^{p}, y^{q} \rangle } \ \text{where} \ {\rm gcd}(p, q)=1. $$
See \cite{BiTr, Qui} for more related results and the details 
on the factorization of usual complete monomial ideals. 

\section{Modules associated to a monomial ideal}\label{module}
In what follows, 
we fix a regular system of parameters $x, y$ for $R$, i.e. $\fkm=\langle x,y \rangle$, and consider a monomial ideal with respect to $x, y$
\begin{equation}\label{fixedmono}
    I=\langle x^{a_i}y^{b_i} \mid 0 \leq i \leq r \rangle  \tag{$\star$} 
\end{equation}
with $r \geq 2$ where 
\begin{align*}
    & a_0>a_1> \dots > a_{r-1} > a_r=0, \\
    & b_0=0<b_1< \dots < b_{r-1}< b_r, \ \text{and} \\
    & a_0 \leq b_r.  \\
\end{align*}

\begin{Lemma}\label{lem}
For the monomial ideal $I$ in $($\ref{fixedmono}$)$, we have \begin{enumerate}
    \item $\mu_R(I)=r+1$,  
    \item $\ord(I)=\min \{ a_i+b_i \mid 0 \leq i \leq r \}$, and  
    \item $\ell_R(R/[I+\langle x+y \rangle])=\ord(I)$.  
\end{enumerate}
Moreover, if $I$ is complete, then  
\begin{enumerate}
    \item[$(4)$] $\ord(I)=r$, and 
    \item[$(5)$] $a_{r-1}=1$. 
\end{enumerate}
\end{Lemma}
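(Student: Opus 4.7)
The plan is to verify the five assertions separately; (1)--(3) follow directly from the monomial structure, (4) from the material of Section 2, and (5) from a Newton--polyhedron computation that uses the hypothesis $a_0\le b_r$ in an essential way.

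For (1), the $r+1$ monomials $x^{a_i}y^{b_i}$ are pairwise distinct by the strict monotonicity of $(a_i,b_i)$, hence $k$-linearly independent modulo $\fkm I$, so they form a minimal set of generators. For (2), since $\fkm^n$ is the ideal generated by all monomials $x^cy^d$ with $c+d\ge n$, the containment $I\subseteq\fkm^n$ is equivalent to $a_i+b_i\ge n$ for every $i$, giving $\ord(I)=\min_i\{a_i+b_i\}$. For (3), I take $z=x+y$; then $R/(z)$ is a one-dimensional regular local ring with uniformizer the image $\bar x$ of $x$ (because $y\equiv -x$), and the image of each generator is $(-1)^{b_i}\bar x^{a_i+b_i}$. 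Hence the image of $I$ in $R/(z)$ equals $(\bar x^{\min_i\{a_i+b_i\}})$, whose quotient in the DVR $R/(z)$ has length $\ord(I)$ by (2).

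For (4), I apply Proposition \ref{2.3} to $I$ viewed as a torsion-free $R$-module of rank $1$. Since $R$ is two-dimensional regular local, the double dual $I^{\ast\ast}$ is reflexive of rank $1$, hence free, and may be identified with $R$; under this identification $I(I)=\Fitt_0(R/I)=I$, so $\ord(I(I))=\ord(I)$. Because $I$ is integrally closed, Proposition \ref{2.4} gives that $I$ is contracted from some $R[\fkm/x']$, and then Proposition \ref{2.3}(2) upgrades the inequality of Proposition \ref{2.3}(1) to the equality $\mu_R(I)=\ord(I)+1$. Combining with (1) yields $\ord(I)=r$.

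The main step is (5), and the idea is to show that the lattice point $(1,b_r-1)$ lies in the Newton polyhedron ${\rm NP}(I)$. The hypothesis $a_0\le b_r$ makes the decomposition
\[
(1,\,b_r-1)\;=\;\frac{1}{b_r}(a_0,0)+\frac{b_r-1}{b_r}(0,b_r)+\Bigl(1-\frac{a_0}{b_r},\,0\Bigr)
\]
valid: the first two terms form a convex combination of the generators $(a_0,0)$ and $(0,b_r)$, and the third term lies in $\mathbb{R}_{\ge 0}^2$ exactly because $a_0\le b_r$. Hence $(1,b_r-1)\in {\rm NP}(I)$, and by the completeness of $I$ together with the Newton--polyhedron description (\ref{newton}), the monomial $xy^{b_r-1}$ belongs to $I$. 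Therefore some minimal generator $x^{a_k}y^{b_k}$ divides $xy^{b_r-1}$, forcing $a_k\in\{0,1\}$; the case $a_k=0$ is ruled out because it would require $b_k=b_r\le b_r-1$. Thus $a_k=1$, and by the strict decrease $a_0>a_1>\cdots>a_{r-1}>a_r=0$, the only possible index is $k=r-1$, giving $a_{r-1}=1$. I expect (5) to be the only delicate step: the challenge is spotting the convex decomposition above, which is precisely what the hypothesis $a_0\le b_r$ is tailored to permit.
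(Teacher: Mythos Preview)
Your proof is correct and follows essentially the same route as the paper. The paper defers (1)--(3) to \cite[Lemma~3.1]{Ha}, derives (4) from (1) via Propositions~\ref{2.3} and~\ref{2.4} exactly as you do, and for (5) simply says it ``follows from the description (\ref{newton}) of complete monomial ideals,'' which is precisely your Newton--polyhedron argument showing $(1,b_r-1)\in\mathrm{NP}(I)$ using $a_0\le b_r$. One small remark: in (1) the phrase ``pairwise distinct \dots\ hence $k$-linearly independent modulo $\fkm I$'' skips a step; what you really use is that, by the strict monotonicity of both sequences, no generator $x^{a_i}y^{b_i}$ is divisible by another, which is the standard criterion for minimality of a monomial generating set on a regular sequence.
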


\begin{proof}
See \cite[Lemma 3.1]{Ha} for the assertions (1), (2), and (3). 
The assertion (4) follows from (1) by 
Propositions \ref{2.3} and \ref{2.4}. The last assertion (5) follows from the description (\ref{newton}) of complete monomial ideals.  
\end{proof}

\begin{Definition}\label{3.1}
Let $I$ be the monomial ideal in $($\ref{fixedmono}$)$, and let $e$ be an integer with $2 \leq e \leq r$. Let 
\begin{align*}
f_i&={}^t(x^{a_i'}y^{b_i}, 0, \dots , 0) \quad  \text{for} \quad 0 \leq i \leq r-e+1, \\    
g_i&={}^t(\dots, 0, \overset{\overset{i}{\smile}}{y}, \overset{\overset{i+1}{\smile}}{x}, 0, \dots)  \quad  \text{for} \quad 1 \leq i \leq e-1, \\
h_i&={}^t(\dots, 0, \overset{\overset{i+1}{\smile}}{y^{c_i}}, 0, \dots) \quad \text{for} \quad 
1 \leq i \leq e-1,  
\end{align*}
where  
\begin{align*}
    a_i'&=a_i-e+1 \quad \text{for} \quad 0 \leq i \leq r-e+1, \ \text{and}  \\ 
    c_i&=b_{r-e+1+i}-i \quad \text{for} \quad 1 \leq i \leq e-1.
\end{align*}
Then we define a module $M(I;e)$ associated to 
$I$ and $e$ as follows: 
\begin{equation*}
M(I;e)= \langle f_0, f_1, \dots , f_{r-e+1}, g_1, \dots , g_{e-1}, h_1, \dots , h_{e-1} \rangle \subset F:=R^e=\langle E_e \rangle    
\end{equation*}
where $E_e$ is an identity matrix of size $e \times e$. \end{Definition}

Note that the matrix of $M(I;e)$ is
\begin{equation*}
    \tilde{M(I;e)}= \left(\begin{array}{cccc|cccc|cccc}
x^{a_0'} & x^{a_1'}y^{b_1} & \cdots & x^{a_{r-e+1}'}y^{b_{r-e+1}} & y & & &  &  & & & \\
& & & & x & y & &  &y^{c_1}& & & \\  
& & & & & x & \ddots & & & y^{c_2} & & \\ 
& & & & &   & \ddots & y & & & \ddots & \\   
& & & & &  &  & x & & & & y^{c_{e-1}}  \\
\end{array}
\right)
\end{equation*}
of size $e \times (r+e)$, and 
\begin{itemize}
    \item $a_0'>a_1'> \dots >a_{r-e+1}' \geq 0$, 
    \item $c_{e-1} \geq c_{e-2} \geq \dots \geq c_1 \geq b_{r-e+1} \geq r-e+1$,
    \item $a_i' \geq r-i-e+1$ for $0 \leq i \leq r-e+1$.
\end{itemize}

Namely, the module $M(I;e)$ is a submodule of $F$ generated by column vectors of the above matrix $\tilde{M(I;e)}$, and the ideal $I(M(I;e))$ is an ideal generated by maximal minors of $\tilde{M(I;e)}$. 

\begin{Proposition}\label{3.2}
Let $M=M(I;e)$ where $2 \leq e \leq r$. Then  
$$I(M)=\langle x^{a_i}y^{b_i} \mid 0 \leq i \leq r-e+1 \rangle+
\langle x^{e-1-i}y^{c_i+i} \mid 1 \leq i \leq e-1 \rangle. $$
In particular, $I(M)$ is a monomial ideal containing $I$ and $\mu_R(I(M))=r+1.$
\end{Proposition}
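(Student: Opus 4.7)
The ideal $I(M)=\Fitt_0(F/M)$ coincides with $I_e(\tilde{M(I;e)})$, the ideal of $e\times e$ minors of the presentation matrix. I would begin by realizing each claimed generator as such a minor. For $G_j:=x^{a_j}y^{b_j}$, take the submatrix with columns $f_j,g_1,\ldots,g_{e-1}$; it is upper triangular with diagonal $x^{a_j'}y^{b_j},x,\ldots,x$ and determinant $x^{a_j'+e-1}y^{b_j}=x^{a_j}y^{b_j}$. For $H_i:=x^{e-1-i}y^{c_i+i}$, take the submatrix with columns $g_1,\ldots,g_{e-1},h_i$ and Laplace-expand along $h_i$: this leaves the $g$-block with row $i+1$ removed, which splits into a lower-triangular top block (rows $1,\ldots,i$, columns $g_1,\ldots,g_i$) with $y$'s on the diagonal and an upper-triangular bottom block (rows $i+2,\ldots,e$, columns $g_{i+1},\ldots,g_{e-1}$) with $x$'s on the diagonal, giving $\pm y^{c_i}\cdot y^i\cdot x^{e-1-i}=\pm H_i$. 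This yields $J\subset I(M)$, writing $J$ for the right-hand side.

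For the reverse inclusion I would argue that every nonzero maximal minor of $\tilde{M(I;e)}$ is a monomial in $J$. Choosing two or more $f$-columns gives a minor whose rows $2,\ldots,e$ are rank deficient, hence zero; so at most one $f$-column is used. Laplace-expanding along that $f$-column (if any) and along each chosen $h$-column reduces the computation to a minor of the upper bidiagonal matrix $G=(g_1,\ldots,g_{e-1})$ restricted to rows $2,\ldots,e$, with $x$ on the diagonal and $y$ on the superdiagonal. A direct matching argument shows that every minor of such a bidiagonal matrix is either zero or a single monomial: the admissibility condition $c_k\in\{r_k,r_k+1\}$ between sorted row and column subsets admits at most one bijection. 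So each nonzero maximal minor has the form $\pm x^{A}y^{B}$ determined by the data $(j,S,\alpha,\beta)$, where $j$ is the $f$-column index (or absent), $S\subset\{1,\ldots,e-1\}$ is the set of chosen $h$-indices, and $\alpha,\beta$ with $\alpha+\beta=e-1-|S|$ count the $x$- and $y$-steps in the induced bidiagonal matching.

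To establish $x^{A}y^{B}\in J$, I would exhibit a staircase generator dividing it. When an $f$-column $f_j$ is used, take $l^{*}=j+\beta+|S|$; then $l^{*}\le r$, and the corresponding generator $G_{l^{*}}$ (if $l^{*}\le r-e+1$) or $H_{l^{*}-(r-e+1)}$ (otherwise) divides $x^{A}y^{B}$. The $x$-exponent inequality is immediate from the strict decrease $a_0>a_1>\cdots>a_r=0$. The $y$-exponent inequality, after substituting $b_l=l+m_l$ with $m_l$ non-decreasing and non-negative, reduces to $m_{l^{*}}\le\sum_{i\in S}m_{r-e+1+i}$ modulo a non-negative slack, which is handled by the combinatorial lemma below. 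When no $f$-column is chosen (so $g_1$ is forced to cover row $1$), take $l^{*}=r-\alpha$; since $|S|\ge 1$ forces $\alpha\le e-2$, this lands in the $H$-range and $P_{l^{*}}=H_{e-1-\alpha}$ has $x$-exponent exactly $A=\alpha$. The main obstacle is the combinatorial lemma: for every achievable matching with $|S|\ge 1$, $\beta+|S|\le\max S$. This holds because the row $e-1$ cannot be $y$-stepped (the target column $e$ is out of range), which by backward propagation forces every matched row above $\max S$ to be an $x$-step; hence $y$-steps occur only at rows $\le\max S$, and the $y$-exponent inequalities in both cases follow.

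Finally, the $r+1$ monomials $G_0,\ldots,G_{r-e+1},H_1,\ldots,H_{e-1}$ have strictly decreasing $x$-exponents (using $a_{r-e+1}\ge e-1$) and strictly increasing $y$-exponents, so none divides another; hence $\mu_R(I(M))=r+1$. The containment $I\subset I(M)$ is checked term by term: $x^{a_i}y^{b_i}$ equals $G_i$ for $i\le r-e+1$, and for $i\ge r-e+2$ it is divisible by $H_{i-(r-e+1)}$ because $a_i\ge r-i$, which again follows from the strict decrease of the $a_l$'s down to $a_r=0$.
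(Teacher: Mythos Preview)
Your approach is correct but genuinely different from the paper's. The paper isolates the $(e-1)\times 2(e-1)$ block $\psi$ formed by the $g$- and $h$-columns on rows $2,\ldots,e$, proves a clean inductive lemma (Lemma~3.3) computing $I_k(\psi)$ and $I_k(\psi')$ for such staircase matrices, and then obtains $I(M)$ by Laplace expansion along row~$1$ followed by a short ideal simplification. Your route instead classifies every maximal minor directly: expand along the unique $f$-column (if any) and the chosen $h$-columns, observe that the residual minor lives in the bidiagonal $g$-block and is therefore a single monomial (the associated bipartite graph is a union of paths, so at most one matching), and then produce for each resulting monomial $x^Ay^B$ an explicit staircase divisor $P_{l^*}$. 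The backward-propagation argument that rows beyond $\max S$ are forced into $x$-steps gives $\alpha\ge e-1-\max S$, which is precisely what the $y$-exponent inequality needs in both the $f$- and no-$f$-cases (your stated bound $\beta+|S|\le\max S$ is off by one in the no-$f$ case, but the weaker inequality you actually prove suffices). The paper's inductive lemma is shorter and reusable, and avoids the case split on $f$ versus no-$f$; your argument is more elementary in that it requires no auxiliary induction, and it makes transparent exactly which minor produces which monomial.
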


To show this, we make the following lemma:  

\begin{Lemma}\label{3.3}
Let $1 \leq u_1 \leq \dots \leq u_k$ be a sequence of integers with $k \geq 1$. Let 
$$\psi=
\left(\begin{array}{ccccc|ccccc}
x & y & & &  & y^{u_1} & & && \\
 & x & y & &  && y^{u_2}& & \\  
 & & x & \ddots & & & &\ddots & & \\ 
 & &   & \ddots & y & & && \ddots & \\   
 & &  &  & x & & & & &y^{u_k}  \\
\end{array}
\right)
$$
be a matirix of size $k \times 2k$ over $R$, and let $\psi'$ be a matrix of size $k \times (2k-1)$ deleting the first column of $\psi$. Then we have the following: 
\begin{align*}
    & I_k(\psi)=\langle x^k \rangle + I_k(\psi'), \ \text{and}  \\
    & I_k(\psi')=\langle x^{k-i}y^{u_i+i-1} \mid 1 \leq i \leq k \rangle
\end{align*}
\end{Lemma}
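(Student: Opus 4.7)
The plan is to prove both assertions by induction on $k$, letting the computation of $I_k(\psi')$ feed into that of $I_k(\psi)$. The base case $k=1$ is immediate: $\psi=(x\ y^{u_1})$ and $\psi'=(y^{u_1})$, so both identities hold directly.

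For the second assertion, I would write $\psi'=(B'\mid D)$ where $B'$ is the $k\times(k-1)$ matrix obtained by deleting the first column of the upper-bidiagonal $k\times k$ block of $\psi$ (so $B'_{r,c}=y$ if $c=r$, $x$ if $c=r-1$, and $0$ otherwise), and $D=\operatorname{diag}(y^{u_1},\dots,y^{u_k})$. A maximal minor of $\psi'$ uses $j$ columns $C$ from $B'$ and $k-j$ columns from $D$, the latter indexed by a subset $S\subseteq\{1,\dots,k\}$ via the row-positions of their unique nonzero entries. Expansion along the $D$-columns produces $\pm y^{\sum_{i\in S}u_i}\cdot\det B'_{T,C}$ with $T=\{1,\dots,k\}\setminus S$. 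The bidiagonal structure of $B'$ forces, for $T=\{t_1<\cdots<t_j\}$ and $C=\{c_1<\cdots<c_j\}$, the matching $\sigma(t_a)=c_a$ to be the unique nonzero term in the Leibniz expansion, contributing iff $c_a\in\{t_a-1,t_a\}$ for every $a$; in that case $\det B'_{T,C}=\pm x^py^{j-p}$ with $p=\sum_a(t_a-c_a)$. Taking $C=\{1,\dots,k-1\}$ together with a single $D$-column realizes each $\pm x^{k-i}y^{u_i+i-1}$ as a genuine minor, giving the inclusion $\supseteq$.

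For the reverse inclusion, I would show that every such nonzero minor, of the form $\pm x^p y^{(j-p)+\sum_{i\in S}u_i}$, is divisible by the generator $x^{k-i^\ast}y^{u_{i^\ast}+i^\ast-1}$ with $i^\ast=k-p$. Since $0\le p\le j\le k-1$ we have $i^\ast\in\{1,\dots,k\}$, the $x$-divisibility is automatic, and the $y$-divisibility reduces to
\[
\sum_{i\in S}u_i\ \ge\ u_{k-p}+|S|-1.
\]
The main obstacle is this combinatorial inequality. I would prove it by first establishing $\max S\ge k-p$: otherwise $\{k-p,\dots,k\}\subseteq T$, and working downwards from row $k$ (which cannot $y$-match because the columns of $B'$ are indexed only up to $k-1$) the matching is forced to assign $c_a=t_a-1$ to each of the $p+1$ rows in $\{k-p,\dots,k\}$, yielding $p\ge p+1$, a contradiction. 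Monotonicity then gives $u_{\max S}\ge u_{k-p}$, and combining with $u_{s_a}\ge 1$ for the remaining $|S|-1$ elements of $S$ produces the inequality.

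For the first assertion, $\supseteq$ is clear since the upper-bidiagonal $k\times k$ block of $\psi$ has determinant $x^k$. A maximal minor of $\psi$ not using column $1$ already lies in $I_k(\psi')$; a minor using column $1$ equals $\pm x\cdot M$ for some $(k-1)$-minor $M$ of the submatrix obtained by deleting row $1$ and column $1$. Now column $k+1$ of $\psi$ had its only nonzero entry $y^{u_1}$ in row $1$, so after deletion the corresponding column vanishes; dropping it leaves a matrix of exactly the shape of $\psi$ but with $k$ replaced by $k-1$ and parameters $u_2,\dots,u_k$. The inductive hypothesis then expresses $x$ times the relevant minor ideal as $\langle x^k\rangle$ together with elements of the form $x^{k-i}y^{u_{i+1}+i-1}$, each divisible by $x^{k-i}y^{u_i+i-1}\in I_k(\psi')$ thanks to $u_{i+1}\ge u_i$, closing the induction.
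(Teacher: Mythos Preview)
Your argument is correct. For the first assertion $I_k(\psi)=\langle x^k\rangle+I_k(\psi')$ you proceed exactly as the paper does: expand along the first column, observe that the column carrying $y^{u_1}$ becomes zero once row~1 is removed, identify the remaining $(k-1)\times 2(k-1)$ block with the $\psi$-matrix for parameters $u_2,\dots,u_k$, and absorb $x\cdot I_{k-1}$ into $\langle x^k\rangle+I_k(\psi')$ using $u_{i+1}\ge u_i$.

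Where you genuinely diverge from the paper is in computing $I_k(\psi')$. The paper runs a short simultaneous induction: expanding $\psi'$ along its first row gives $I_k(\psi')=yI_{k-1}(\psi_1')+y^{u_1}I_{k-1}(\psi_1)$, and both pieces are handled by the inductive hypothesis (the second collapses into the first plus $\langle x^{k-1}y^{u_1}\rangle$ because $u_1\ge 1$). You instead perform a direct, non-inductive analysis: Laplace-expand along the diagonal block $D$, use the bidiagonal shape of $B'$ to see that each complementary minor is a single monomial $x^py^{j-p}$, and then prove the divisibility $\sum_{i\in S}u_i\ge u_{k-p}+|S|-1$ via the combinatorial claim $\max S\ge k-p$. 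Your uniqueness claim for the Leibniz term is justified (no inversion can occur since $c_b\le t_a$ and $c_{b'}\ge t_{a'}-1$ with $a<a'$, $b>b'$ force $c_b\ge t_{a'}>t_a$, a contradiction), and the forced ``$x$-cascade'' from row $k$ downward is a nice way to get $\max S\ge k-p$. The paper's route is shorter and avoids the combinatorics entirely; yours has the merit of exhibiting explicitly which minors realise the generators (namely $C=\{1,\dots,k-1\}$ with a single $D$-column) and of making the second identity logically independent of the first.
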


\begin{proof}
We show the assertion by induction on $k$. When $k=1$, it is clear since 
$\psi=(x, y^{u_1})$ and 
$\psi'=(y^{u_1})$. 
Suppose that $k \geq 2$. 
Let $\psi_1$ be a matrix of size $(k-1) \times 2(k-1)$ deleting the 1st and $(k+1)$th columns and the 
1st row of $\psi$, and let $\psi_1'$ be a matrix deleting the 1st column of $\psi_1$. Then 
\begin{align*}
    I_k(\psi') &= yI_{k-1}(\psi_1')
    +y^{u_1}I_{k-1}(\psi_1)  \\
    &=yI_{k-1}(\psi_1')
    +y^{u_1}\left[ \langle x^{k-1} \rangle +I_{k-1}(\psi_1') \right]   \quad \text{by induction} \\
    &= yI_{k-1}(\psi_1')+\langle x^{k-1}y^{u_1} \rangle  \quad \text{since} \ u_1 \geq 1 \\
     &= y\langle x^{k-1-i}y^{u_{i+1}+i-1} \mid 1 \leq i \leq k-1 \rangle +\langle x^{k-1}y^{u_1} \rangle   \quad \text{by induction}  \\
     &=\langle x^{k-i}y^{u_i+i-1} \mid 1 \leq i \leq k \rangle, 
\end{align*}
and 
\begin{align*}
    I_k(\psi) &= xI_{k-1}(\psi_1)+I_k(\psi') \\
    &= x \left[ \langle x^{k-1} \rangle + I_{k-1}(\psi_1') \right]+I_k(\psi')   \quad \text{by induction}  \\ 
    &= \langle x^{k} \rangle +xI_{k-1}(\psi_1') + I_k(\psi')  \\
    &= \langle x^k \rangle + I_k(\psi')
\end{align*}
The last equality follows from 
\begin{align*}
    xI_{k-1}(\psi_1')&=\langle x^{k-i}y^{u_{i+1}+i-1} \mid 1 \leq i \leq k-1 \rangle \quad \text{by induction} \\
    &\subset I_k(\psi') \quad \text{since} \ u_{i+1}  \geq u_i
\end{align*} 
\end{proof}

\begin{proof}[Proof of Proposition \ref{3.2}]
Let $M=M(I;e)$, and let $\psi$ be a matrix of size $(e-1) \times 2(e-1)$
consisting of the last $(e-1)$ rows and the last $2(e-1)$ columns of $\tilde M$. Let $\psi'$ be a matrix of size $(e-1) \times (2(e-1)-1)$ deleting the 1st column of $\psi$. Then 
\begin{align*}
    I(M)&=I_e(\tilde M)\\
    &=\langle x^{a_i'}y^{b_i} \mid 0 \leq i \leq r-e+1 \rangle I_{e-1}(\psi) + y I_{e-1}(\psi') \\
    &=\langle x^{a_i'}y^{b_i} \mid 0 \leq i \leq r-e+1 \rangle \left[\langle x^{e-1} \rangle + I_{e-1}(\psi') \right]  + y I_{e-1}(\psi') \quad \text{by Lemma \ref{3.3}}\\
    &=\langle x^{a_i}y^{b_i} \mid 0 \leq i \leq r-e+1 \rangle + \langle x^{a_0'}, y\rangle I_{e-1}(\psi') \quad \text{since} \ b_i \geq 1 \ \text{for} \ 1 \leq i \leq r-e+1\\
    &=\langle x^{a_i}y^{b_i} \mid 0 \leq i \leq r-e+1 \rangle + \langle x^{a_0'}, y\rangle \langle x^{e-1-i}y^{c_i+i-1} \mid 1 \leq i \leq e-1 \rangle \\
    & \hspace{10.8cm} \text{by Lemma \ref{3.3}}\\
    &=\langle x^{a_i}y^{b_i} \mid 0 \leq i \leq r-e+1 \rangle \\
    &\hspace{3cm} +\langle x^{a_0-i}y^{c_i+i-1} \mid 1 \leq i \leq e-1 \rangle+ \langle x^{e-1-i}y^{c_i+i}  \mid 1 \leq i \leq e-1 \rangle. 
\end{align*}
Then the 1st and 3rd terms contain $I$ since 
$a_{r-e+1+i} \geq e-1-i$ and $c_i+i=b_{r-e+1+i}$.  
The 2nd term is contained in $I$ since
$a_0-i \geq a_i$ and $c_i+i-1=b_{r-e+1+i}-1 \geq b_{r-e+i} \geq b_i$. Hence, $I \subset I(M)$ and 
$$I(M)=\langle x^{a_i}y^{b_i} \mid 0 \leq i \leq r-e+1 \rangle+
\langle x^{e-1-i}y^{c_i+i} \mid 1 \leq i \leq e-1 \rangle.$$
The assertion $\mu_R(I(M))=r+1$ follows from Lemma \ref{lem}(1).
\end{proof}

Consequently, the ideal $I(M(I;e))$ is an 
$\fkm$-primary monomial ideal containing $I$ in (\ref{fixedmono}). Therefore, the quotient $F/M(I;e)$ has finite length, and 
for any $2 \leq e \leq r$, 
$$\rank_R(M(I;e))=e. $$ 

\begin{Proposition}\label{3.4}
Let $M=M(I;e)$ where $2 \leq e \leq r$. Then we have $\mu_R(M)=r+e$. 
\end{Proposition}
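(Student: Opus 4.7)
The inequality $\mu_R(M)\le r+e$ is immediate from the explicit list of $r+e$ generators. For the matching lower bound, I plan to apply Nakayama's lemma to the natural surjection $\phi\colon R^{r+e}\twoheadrightarrow M$ sending the standard basis to $f_0,\dots,f_{r-e+1},g_1,\dots,g_{e-1},h_1,\dots,h_{e-1}$, and show that $\ker\phi\subseteq\fkm R^{r+e}$; this forces $\mu_R(M)=\dim_{R/\fkm}M/\fkm M=r+e$.

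Write a general syzygy as $(\alpha_0,\dots,\alpha_{r-e+1},\beta_1,\dots,\beta_{e-1},\gamma_1,\dots,\gamma_{e-1})$. Reading off rows $2,\dots,e$ of $\tilde{M(I;e)}$ produces the triangular system
\[
\beta_{e-1}x+\gamma_{e-1}y^{c_{e-1}}=0,\qquad \beta_{k-1}x+\beta_ky+\gamma_{k-1}y^{c_{k-1}}=0\ (2\le k\le e-1).
\]
I would solve this from the bottom equation upward, using that $x$ and $y$ are coprime in the UFD $R$, to parametrize the solutions as $\beta_j=y^{c_j}s_j$ and $\gamma_j=-xs_j-y^{c_{j+1}+1-c_j}s_{j+1}$ (with the convention $s_e=0$), for free parameters $s_j\in R$. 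The bounds $c_j=b_{r-e+1+j}-j\ge r-e+1\ge 1$ and $c_{j+1}+1-c_j\ge 1$ (both from the strict increase of the $b_i$) then put every $\beta_j$ and $\gamma_j$ into $\fkm$ regardless of the $s_j$.

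Row $1$ of $\tilde{M(I;e)}$ reduces to $\sum_{i=0}^{r-e+1}\alpha_ix^{a_i'}y^{b_i}=-y^{b_{r-e+2}}s_1$. The key colon computation $J':y^{b_{r-e+2}}=\langle x^{a_{r-e+1}'}\rangle$, where $J'=\langle x^{a_i'}y^{b_i}\mid 0\le i\le r-e+1\rangle$, pins down the admissible $s_1$ and furnishes a particular solution supported at index $r-e+1$ and proportional to $y^{b_{r-e+2}-b_{r-e+1}}$; since $b_{r-e+2}>b_{r-e+1}$, that factor already lies in $\fkm$. The general solution differs from this particular one by a syzygy of the minimal monomial generating set $\{x^{a_i'}y^{b_i}\}_{i=0}^{r-e+1}$ of $J'$, and any syzygy of a minimal generating set has entries in $\fkm$ (else a generator would be redundant). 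Hence every $\alpha_i$ is in $\fkm$ as well, giving $\ker\phi\subseteq\fkm R^{r+e}$ and so $\mu_R(M)=r+e$. The main technical point is the row-$1$ analysis, where the split $a_{r-e+1}'=0$ vs.\ $a_{r-e+1}'\ge 1$ is absorbed uniformly by the above colon formula: in either case the accompanying factor $y^{b_{r-e+2}-b_{r-e+1}}\in\fkm$ is what pushes $\alpha_{r-e+1}$ into $\fkm$.
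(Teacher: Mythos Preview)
Your proof is correct and follows essentially the same route as the paper: both arguments show that every syzygy of the generating set lies in $\fkm R^{r+e}$ by solving the row-$e$ through row-$2$ equations inductively to force $\beta_j\in\langle y^{c_j}\rangle$ and $\gamma_j\in\fkm$, and then handling row $1$. The paper is terser at the final step---it simply observes that $\sum_i\alpha_i x^{a_i'}y^{b_i}+\beta_1' y^{b_{r-e+2}}=0$ forces $\alpha_i\in\fkm$---whereas you unpack this via the colon computation and a particular-plus-syzygy decomposition; the two arguments are equivalent, yours just being more explicit about why the last implication holds.
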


\begin{proof}
Suppose that 
\begin{equation}\label{eqsys}
\sum_{i=0}^{r-e+1}\alpha_i f_i+\sum_{i=1}^{e-1}(\beta_i g_i+\gamma_i h_i)={\bf 0}    
\end{equation}
where $\alpha_i, \beta_i, \gamma_i \in R$. 
We show that each $\alpha_i, \beta_i, \gamma_i \in \fkm$. 
By the $e$th row of (\ref{eqsys}), 
\begin{equation*}
    \beta_{e-1} x +\gamma_{e-1} y^{c_{e-1}} =0. 
\end{equation*}
Hence $\beta_{e-1} \in \langle y^{c_{e-1}} \rangle$ and  $\gamma_{e-1} \in \langle x \rangle$. 
Here we claim that

\medskip

\noindent
{\bf{Claim}}: for any $1 \leq i \leq e-2$, if $\beta_{i+1} \in \langle y^{c_{i+1}} \rangle$, then $\beta_i \in \langle y^{c_i} \rangle$ and $\gamma_i \in \langle x, y^{c_{i+1}-c_i+1} \rangle$. 

\begin{proof}
Write $\beta_{i+1}=\beta_{i+1}'y^{c_{i+1}}$ where $\beta_{i+1}' \in R$. 
Then, by the $(i+1)$th row of (\ref{eqsys}), 
$$\begin{array}{ccl}
    0&=&\beta_ix+\beta_{i+1}y+\gamma_iy^{c_i} \\
    &=&\beta_ix+\beta_{i+1}'y^{c_{i+1}+1}+\gamma_iy^{c_i} \\
    &=&\beta_ix+y^{c_i}(\beta_{i+1}'y^{c_{i+1}-c_i+1}+\gamma_i). 
\end{array}$$
Hence $\beta_i \in \langle y^{c_i} \rangle$ and $\gamma_i \in \langle x, y^{c_{i+1}-c_i+1} \rangle$. This proves the Claim. 
\end{proof}

\medskip

Therefore, $\beta_i \in \langle y^{c_i} \rangle$ and 
$\gamma_i \in \langle x, y \rangle$ for any $1 \leq i \leq e-1$. Write $\beta_1=\beta_1' y^{c_1}$ where $\beta_1' \in R$. By the 1st row of (\ref{eqsys}), 
\begin{align*}
    0&=\sum_{i=0}^{r-e+1} \alpha_i x^{a_i'}y^{b_i} +\beta_1 y \\
    &=\sum_{i=0}^{r-e+1} \alpha_i x^{a_i'}y^{b_i} +\beta_1' y^{c_1+1}. 
\end{align*}
Note that $c_1+1=b_{r-e+2}$. This implies that  
$\alpha_i \in \fkm$ for any $0 \leq i \leq r-e+1$. 
This completes the proof. 
\end{proof}

\begin{Corollary}\label{3.5}
Let $M=M(I;e)$ where $2 \leq e \leq r$. Consider the following conditions:
\begin{enumerate}
    \item $I$ is contracted from $S=R[\frac{\fkm}{x'}]$ for some $x' \in \fkm \setminus \fkm^2$. 
    \item $I(M)$ is contracted from $S=R[\frac{\fkm}{x'}]$ for some $x' \in \fkm \setminus \fkm^2$. 
    \item $M$ is contracted from $S=R[\frac{\fkm}{x'}]$ for some $x' \in \fkm \setminus \fkm^2$. 
\end{enumerate}
Then we have $(1)$ $\Rightarrow$ $(2)$ $\Leftrightarrow$ $(3)$. 
\end{Corollary}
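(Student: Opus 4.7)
The plan is to reduce each of (1), (2), (3) via Proposition \ref{2.3} to a single numerical equality $\ord(I(M))=r$, and then to verify (1) $\Rightarrow$ (2) by bounding $\ord(I(M))$ directly from the generating set given in Proposition \ref{3.2}.

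First, I would show (2) $\Leftrightarrow$ (3) by observing that both are equivalent to $\ord(I(M))=r$. For (3): by Proposition \ref{2.3}(2)(a)$\Leftrightarrow$(b), $M$ is contracted from some $S=R[\fkm/x']$ if and only if $\mu_R(M)=\ord(I(M))+\rank_R(M)$. Combined with $\mu_R(M)=r+e$ from Proposition \ref{3.4} and $\rank_R(M)=e$, this reduces to $\ord(I(M))=r$. For (2), I would apply Proposition \ref{2.3} to the $\fkm$-primary ideal $J:=I(M)$ regarded as a torsion-free $R$-module of rank one. Its double dual is $R$, hence $I(J)=\Fitt_0(R/J)=J$, and the proposition reads $\mu_R(J)=\ord(J)+1$. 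Since $\mu_R(J)=r+1$ by Proposition \ref{3.2}, this is again $\ord(I(M))=r$.

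For (1) $\Rightarrow$ (2), the analogous application of Proposition \ref{2.3} to $I$ itself gives (1) $\Leftrightarrow$ $\ord(I)=r$ (using $\mu_R(I)=r+1$ from Lemma \ref{lem}(1)). Since $I\subset I(M)$ by Proposition \ref{3.2}, the containment forces $\ord(I(M))\le\ord(I)=r$, and it remains to prove the reverse inequality. By Lemma \ref{lem}(2) this reduces to showing that every listed monomial generator of $I(M)$ has total degree $\ge r$. The generators $x^{a_i}y^{b_i}$ with $0\le i\le r-e+1$ already belong to $I$ and so have order $\ge r$.

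The main obstacle, and the only step in which the combinatorics of the construction really enters, is the order check on the remaining family $x^{e-1-i}y^{c_i+i}$ with $1\le i\le e-1$. After the substitution $j=r-e+1+i$ (so that $e-1-i=r-j$ and $c_i+i=b_j$), these generators take the form $x^{r-j}y^{b_j}$ with $r-e+2\le j\le r$, and the needed estimate $(r-j)+b_j\ge r$ collapses to $b_j\ge j$. This is immediate from the strict monotonicity $b_0=0<b_1<\cdots<b_r$ of non-negative integers. Hence $\ord(I(M))\ge r$, establishing (1) $\Rightarrow$ (2).
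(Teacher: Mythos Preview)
Your proof is correct and follows the same overall structure as the paper's: reduce each of (1), (2), (3) to the numerical condition $\ord(I(M))=r$ via Proposition~\ref{2.3}. The one place where you diverge is in the verification of $\ord(I(M))\ge r$ inside the step (1)~$\Rightarrow$~(2): you check directly that every monomial generator of $I(M)$ listed in Proposition~\ref{3.2} has total degree at least $r$ (the substitution $j=r-e+1+i$ reducing the nontrivial family to $b_j\ge j$), whereas the paper bypasses this computation entirely by invoking the general inequality $\mu_R(I(M))\le \ord(I(M))+1$ of Proposition~\ref{2.3}(1), which together with $\mu_R(I(M))=r+1$ gives $\ord(I(M))\ge r$ in one line. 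Your argument is slightly more hands-on but equally valid; the paper's is cleaner since it uses no specifics of the generator list beyond $\mu_R(I(M))=r+1$.
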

\begin{proof}
We show (1) $\Rightarrow$ (2). 
Suppose that $I$ is contracted. Then by Proposition \ref{2.3}(2), 
$$\mu_R(I)=\ord(I)+1, $$ 
so $\ord(I)=r$ since $\mu_R(I)=r+1$ (Lemma \ref{lem}(1)). 
By Proposition \ref{3.2}, $I \subset I(M)$ and $\mu_R(I(M))=r+1$, so we have 
\begin{align*}
    r+1 &= \mu_R(I(M)) \\
    &\leq \ord(I(M))+1 \quad \text{by Proposition \ref{2.3}(1)}\\
    &\leq r+1. 
\end{align*}
Hence $\mu_R(I(M))=\ord(I(M))+1$. Thus, $I(M)$ is contracted by Proposition \ref{2.3}(2). 

Next, we show (2) $\Leftrightarrow$ (3). 
Suppose that $I(M)$ is contracted. Then  $$r+1=\mu_R(I(M))=\ord(I(M))+1, $$
so $\ord(I(M))=r$. By Proposition \ref{3.4}, $$\mu_R(M)=r+e=\ord(I(M))+\rank(M), $$ 
and, hence, 
$M$ is contracted by Proposition \ref{2.3}(2). 
Thus we have (2) $\Rightarrow$ (3). The converse follows from the last assertion of Proposition \ref{2.3}(2). This completes the proof.  
\end{proof}

The implication (2) $\Rightarrow$ (1) in Corollary \ref{3.5} 
does not hold in general when $e \geq 3$. 

\begin{Example}\label{3.6}
{\rm 
Let 
$$I=\langle x^{r+1-i}y^i \mid 0 \leq i \leq r-1 \rangle+\langle y^{r+1} \rangle$$ 
where $r \geq 3$. 
Since $\mu_R(I)=\ord(I)=r+1$, 
$I$ is not contracted by Proposition \ref{2.3}(2). 
Let $3 \leq e \leq r$, and let $M=M(I;e)$. 
Then, by Proposition \ref{3.2}, 
$$I(M)=\langle x^{r+1-i}y^i \mid 0 \leq i \leq r-e+1 \rangle +\langle x^{e-1-i}y^{r-e+1+i} \mid 1 \leq i \leq  e-2 \rangle +\langle y^{r+1} \rangle, $$
so $\mu_R(I(M))=r+1$ and $\ord(I(M))=r$. Hence, $I(M)$ is contracted.
}
\end{Example}

Here is the main result in this section. 

\begin{Theorem}\label{3.7}
Let $M=M(I;e)$ where $2 \leq e \leq r$. Then the following are equivalent:
\begin{enumerate}
    \item $I(M)$ is integrally closed. 
    \item $M$ is integrally closed. 
\end{enumerate}
\end{Theorem}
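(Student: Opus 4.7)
The plan is to treat the two implications separately.

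For (2) $\Rightarrow$ (1): this is immediate from Kodiyalam's commutativity theorem $I(\bar M) = \bar{I(M)}$ recalled in the introduction. If $M$ is integrally closed, then $I(M) = I(\bar M) = \bar{I(M)}$, so $I(M)$ is integrally closed.

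For the nontrivial direction (1) $\Rightarrow$ (2), the strategy is to pass to first quadratic transforms and argue by induction on $r$. Assume $I(M)$ is integrally closed. Then $I(M)$ is contracted from some $S = R[\fkm/x']$ with $x' \in \fkm \setminus \fkm^2$ by Proposition \ref{2.4}, and Corollary \ref{3.5} (the implication (2) $\Rightarrow$ (3)) gives that $M$ itself is contracted from the same $S$. Proposition \ref{2.5} now reduces the task to showing that $MS$ is integrally closed as an $S$-module, and by Proposition \ref{2.1} this further reduces to verifying integral closedness of $MS_Q$ at each maximal ideal $Q$ of $S$. For $Q$ not containing $\fkm S$, $S_Q$ is a DVR and any finitely generated torsion-free module over it is automatically free, hence integrally closed. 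It therefore suffices to show $MT$ is integrally closed for every first quadratic transform $T = S_Q$ of $R$.

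For each such $T$, Proposition \ref{2.6} gives $I(MT) = I(M)^T$, which is integrally closed as a transform of a complete ideal (as observed just after Proposition \ref{2.5}). The plan is then to show, after a careful choice of $x'$ so that both $I$ and $I(M)$ become contracted from $R[\fkm/x']$, that under the substitution $y \mapsto zy$ corresponding to the new uniformizer $z$ of $T$ and division by the appropriate power of $z$, the matrix $\tilde{M(I;e)}$ of Definition \ref{3.1} becomes, up to the $\sim$-equivalence of the Notation, the canonical matrix of a module $M(J;e')$ for the transformed monomial ideal $J = I^T$ with respect to a new regular system of parameters of $T$, where $J$ has a strictly smaller $r$-parameter (the Newton polyhedron of $I^T$ has fewer vertices than that of $I$). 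Since $I(MT) = I(M)^T$ is integrally closed, the inductive hypothesis applied to the instance $M(J;e')$ over $T$ then yields that $MT$ is integrally closed, as required. The base case (smallest admissible $r$) can be dispatched by direct inspection of the matrix, since $F/M$ then has small length.

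The main obstacle is this explicit identification of $MT$ with an instance of the $M(\cdot;\cdot)$ construction over $T$. One must track how the exponents $a_i'$, $b_i$ and $c_i$ transform under the substitution, divide off the correct power of $z$, and then perform elementary row and column operations over $T$ to bring the resulting matrix into the canonical staircase form of Definition \ref{3.1} for $I^T$; the choice of $x'$ (controlled through Proposition \ref{2.3}(2)(c), using $\ell_R(R/[I(M)+\langle x'\rangle]) = \ord(I(M))$) is precisely what makes this normalization possible.
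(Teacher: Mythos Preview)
Your setup---reduce to the transform via contractedness (Proposition~\ref{2.4}, Corollary~\ref{3.5}, Proposition~\ref{2.5}), then localize at the maximals of $S$---matches the paper exactly, as does the treatment of the DVR points. The difference is in the endgame at a first quadratic transform $T=S_Q$.

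You propose to identify $MT$ with another instance $M(J;e')$ over $T$ for the transformed monomial ideal $J=I^T$ and then induct on $r$. This is the wrong normal form: $MT$ is \emph{not} of the shape $M(J;e')$. What actually happens is much simpler. With the specific choice $x'=x+y$ (justified by Lemma~\ref{lem}(3) via Proposition~\ref{2.3}(2)(c)), set $z=x/(x+y)$ so that $x=z(x+y)$ and $y=(1-z)(x+y)$. Substituting into $\tilde{M(I;e)}$ and factoring $(x+y)$ out of each row, the staircase block built from the $g_i$'s becomes a bidiagonal matrix with entries $z$ and $1-z$. At any closed point $Q\supseteq\fkm S$, exactly one of $z$, $1-z$ is a unit in $S_Q$, so elementary row/column operations over $S_Q$ peel off $e-1$ \emph{free} summands, leaving a single row. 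Thus
\[
MS_Q\;\cong\;J\oplus S_Q^{\,e-1}
\]
for an ideal $J$, and by Proposition~\ref{2.6} this ideal is $I(M)^{S_Q}$, which is complete because $I(M)$ is. That finishes the proof in one step, with no induction.

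Concretely, the gap in your plan is the claimed normalization of $MT$ to the canonical matrix of $M(J;e')$: after the blow-up substitution the $g$-columns acquire unit entries, so the module splits off free summands and cannot be brought to the $M(\cdot;\cdot)$ staircase form (which has no free summand). Once you notice this splitting, the inductive scaffolding and the ``base case by direct inspection'' become unnecessary.
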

\begin{proof}
The assertion (2) $\Rightarrow$ (1) follows from \cite[Theorem 5.4]{Ko}. 
We show (1) $\Rightarrow$ (2). 
Suppose that $I(M)$ is integrally closed. Then 
$I(M)$ is 
contracted 
by Proposition \ref{2.4}, and, hence, $M$ is contracted by Corollary \ref{3.5}. 
Let $S=R[\frac{\fkm}{x+y}]$. Then 
$$\ell_R(R/[I(M)+\langle x+y \rangle])=\ord(I(M))$$ 
by Lemma \ref{lem}(3), so 
$M$ is contracted from $S$ by Proposition \ref{2.3}(2). By Proposition \ref{2.5}, it is enough to show that $MS$ is integrally closed. 
Let $z=\frac{x}{x+y} \in S$. Then, in $S$, we can write 
\begin{equation}\label{xy}
    x=z(x+y) \quad \text{and} \quad y=(1-z)(x+y). 
\end{equation} 
We substitute 
the right hand side in (\ref{xy}) for $x, y$ in the matrix $\tilde{MS}$. Then, by considering the $S$-linear map $S^e \to S^e$ defined by $(x+y)E_e$, we have
\begin{equation*}
    \tilde{MS} \sim  
    \left( \begin{array}{ccc|cccc|cccc} 
    \ast&\cdots&\ast&1-z& &  &  &  &  & & \\ 
    & & &z&1-z& & & \ast& & & \\ 
    & & &  &z&\ddots& & &\ast& & \\ 
    & & & & &\ddots&1-z& & &  \ddots &\\ 
    & & & & & &z& & && \ast  
    \end{array} \right). 
\end{equation*}
By Proposition \ref{2.1}, it is enough to show that 
$MS_Q$ is integrally closed for any maximal ideal $Q$ of $S$.

Let $Q$ be a maximal ideal of $S$. If $Q \nsupseteq \fkm S$, then $S_Q$ is a discrete valuation ring. Therefore, 
$MS_Q$ is integrally closed because of 
the fact that any submodule of finitely generated free module over a discrete valuation ring 
is integrally closed.

Suppose 
that $Q \supseteq \fkm S$. 
If $z \notin Q$, then $z$ is a unit of $S_Q$, and, by elementary matrix operations over $S_Q$, we have
\begin{equation*}
    \tilde{MS_Q} \sim 
    \left( \begin{array}{ccc|cccc}
        \ast &\cdots&\ast&&&&  \\
         &&&1&&&  \\
         &&&&1&&  \\
         &&&&&\ddots&  \\
         &&&&&&1  \\
    \end{array} \right). 
\end{equation*}
Therefore, $MS_Q \cong J \oplus S_Q^{e-1}$ for some ideal $J$ of $S_Q$. 
We may assume that $J$ is $QS_Q$-primary. 
By Proposition \ref{2.6}, 
$$J=I(MS_Q)=I(M)^{S_Q}, $$ 
and, hence, 
$J$ is complete. Thus, $MS_Q$ is integrally closed. 
Suppose that $z \in Q$, so that $1-z \notin Q$. 
Since $1-z$ is a unit of $S_Q$, we have 
\begin{equation*}
    \tilde{MS_Q} \sim 
    \left( \begin{array}{ccc|cccc}
         &&&1&&&  \\
         &&&&1&&  \\
         &&&&&\ddots&  \\
         &&&&&&1  \\
         \ast &\cdots&\ast&&&&  \\
    \end{array} \right). 
\end{equation*}
Therefore, by the same arguments as above, 
$MS_Q \cong S_Q^{e-1} \oplus J$ for some $QS_Q$-primary complete ideal $J$ of $S_Q$. Thus, $MS_Q$ is integrally closed. This proves that $M$ is integrally closed. 
\end{proof}

\begin{Remark}\label{3.9}
{\rm 
If the monomial ideal $I$ in (\ref{fixedmono}) is complete and 
an integer $e$ satisfies $a_{r-e+2}=e-2$, then $I=I(M)$ by Proposition \ref{3.2}. Hence, 
the associated module $M=M(I;e)$ is integrally closed by Theorem \ref{3.7}. Note that the condition $a_{r-e+2}=e-2$ is always satisfied when $e=2$. 
Also, if $I$ is complete, it is satisfied when $e=3$ (Lemma \ref{lem}(5)). Therefore, if $I$ is complete, then the associated modules $M(I;2)$ and $M(I;3)$ are always integrally closed. }
\end{Remark}

However, even if the monomial ideal $I$ in (\ref{fixedmono}) is complete, the associated module $M(I;e)$ is not necessarily 
integrally closed when $e \geq 4$. 

\begin{Example}
{\rm 
Let $$I=\langle x, y^{r-1} \rangle \bar{\langle x^{2r-3}, y^{r-1} \rangle }$$ where $r \geq 4$. Then $I$ is complete. 
Let $4 \leq e \leq r$, and let $M=M(I;e)$. 
Then 
$$I(M)=I+\langle x^{e-1-i}y^{r-e+1+i} \mid 1 \leq i \leq e-3 \rangle $$
by Proposition \ref{3.2}, and hence, 
$$\bar{I(M)}=\langle x, y^{r-1} \rangle \langle x,y \rangle^{e-3} \bar{\langle x^{2r-e}, y^{r-e+2} \rangle}. $$
Since $x^{2(r-2)}y \in \bar{I(M)} \setminus I(M)$, $I(M)$ is not complete, and, hence, $M$ is not integrally closed by Theorem \ref{3.7}. 
}
\end{Example}

\section{Two results on integrally closed modules}\label{two}
In this section, we will prove two results on integrally closed $R$-modules, which are of interest in its own right. 
These will play key role in our proof of indecomposablity of the module 
$M(I;e)$ defined in section \ref{module}. 

We first recall the following useful lemma
from Gaffney \cite[Proposition 1.5]{Ga} in a particular case.

\begin{Lemma}\label{4.1}
Let $A$ be an arbitrary Noetherian ring, and let $J$ be its Jacobson radical. Let $\fkb \subset \fka$ be ideals in $A$. If the equality $$\bar{J \bar{\fka}+\fkb}=\bar{\fka}$$
holds, then we have $\bar{\fka}=\bar{\fkb}$. 
\end{Lemma}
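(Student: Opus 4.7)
The plan is to recognize this as a Nakayama-type lemma in the theory of reductions of ideals. Recall that an inclusion $\fkc \subset \fkd$ makes $\fkc$ a reduction of $\fkd$ (equivalently $\bar{\fkc} = \bar{\fkd}$) precisely when there is an integer $n \geq 0$ with $\fkd^{n+1} = \fkc\fkd^n$. So the target $\bar{\fka} = \bar{\fkb}$ amounts to saying $\fkb$ is a reduction of $\bar{\fka}$, and I would aim to produce such a reduction equation via the Jacobson radical.

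First I would translate the hypothesis. Since $\bar{J\bar{\fka} + \fkb} = \bar{\fka}$ and $J\bar{\fka} + \fkb \subset \bar{\fka}$, the ideal $J\bar{\fka} + \fkb$ is a reduction of $\bar{\fka}$. By the reduction criterion above, there exists $n \geq 0$ with
\begin{equation*}
\bar{\fka}^{n+1} \;=\; (J\bar{\fka} + \fkb)\,\bar{\fka}^n \;=\; J\bar{\fka}^{n+1} + \fkb\,\bar{\fka}^n.
\end{equation*}
Since $A$ is Noetherian, $\bar{\fka}^{n+1}$ is a finitely generated $A$-module, and so is the quotient $N := \bar{\fka}^{n+1}/\fkb\,\bar{\fka}^n$. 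The displayed equation says $JN = N$, so Nakayama's lemma for the Jacobson radical gives $N = 0$, i.e.\ $\bar{\fka}^{n+1} = \fkb\,\bar{\fka}^n$.

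This last equation is exactly the reduction criterion for $\fkb \subset \bar{\fka}$, so $\fkb$ is a reduction of $\bar{\fka}$ and therefore $\bar{\fkb} = \bar{\bar{\fka}} = \bar{\fka}$, as desired. The argument is essentially routine once the hypothesis is read as a reduction statement; the only point that needs a moment of care is verifying that the reduction characterization $\fka^{n+1} = \fkc\fka^n$ applies here (which is standard, e.g.\ via the finite generation of the Rees algebra's integral closure over its subalgebra generated by $\fkc$), and that Nakayama is applied to a genuinely finitely generated module — both of which follow from the Noetherian hypothesis on $A$. No step here presents a real obstacle.
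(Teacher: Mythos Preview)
Your proof is correct and essentially identical to the paper's own argument: translate the hypothesis into $J\bar{\fka}+\fkb$ being a reduction of $\bar{\fka}$, use the reduction equation $\bar{\fka}^{n+1}=(J\bar{\fka}+\fkb)\bar{\fka}^n$, apply Nakayama to kill the $J$-term, and conclude that $\fkb$ is a reduction of $\bar{\fka}$. The only difference is that you spell out the finite-generation justification for Nakayama, which the paper leaves implicit.
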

\begin{proof}
Since $J\bar{\fka}+\fkb \subset \bar{J\bar{\fka}+\fkb}=\bar{\fka}$ is a reduction, there exists an integer $\ell \geq 0$ such that 
$$\begin{array}{ccc}
     \bar{\fka}^{\ell+1}&=&(J\bar{\fka}+\fkb)\bar{\fka}^{\ell}  \\
     &=& J \bar{\fka}^{\ell+1}+\fkb \bar{\fka}^{\ell}.  
\end{array}$$
Hence, $\bar{\fka}^{\ell+1}=\fkb \bar{\fka}^{\ell}$ by Nakayama's Lemma. Therefore, $\fkb \subset \bar{\fka}$ is a reduction, so $\bar{\fka}=\bar{\fkb}$. 
\end{proof}

Let us recall the notion of reduction of modules. Let $A$ be an arbitrary Noetherian integral domain, and let $M$ be a finitely generated torsion-free $A$-module. Then an $A$-module $N$ 
is said to be 
a reduction of $M$, if $N \subset M \subset \bar{N}$. 
Note that if $A=R$ our base ring, then by the determinantal criterion (\ref{detcri}), 
$$N \subset M \ \text{is a reduction} \ \Leftrightarrow \ I(N) \subset I(M) \ \text{is a reduction.} $$ 

Here is our first main result in this section. 

\begin{Theorem}\label{4.2}
Let $M$ be a finitely generated torsion-free, integrally closed $R$-module without a free direct summand, and let $e=\rank_R (M) \geq 2$. 
Assume that
\begin{enumerate}
    \item $\ord(I(M)) \geq e+1$, 
    \item $\bar{I_1(M)}=\fkm^{e-1}$, and 
    \item $M$ is not indecomposable. 
\end{enumerate}
Then there exist an integer $1 \leq \ell \leq e$ and an indecomposable integrally closed $R$-module $L$ such that 
$$M \cong \fkm^{\oplus \ell} \oplus L. $$
In particular, $M$ has the maximal ideal $\fkm$ as a direct summand. 
\end{Theorem}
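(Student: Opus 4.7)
My plan is to induct on $e = \rank_R(M) \ge 2$.

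\emph{Base case $e=2$.} Here condition (3) gives $M = M_1 \oplus M_2$ with both $M_i$ of rank one, integrally closed, and non-free; each $M_i \cong \fka_i$ for an $\fkm$-primary complete ideal $\fka_i$. Since $I_1(\fka_i) = R$, the Fitting-sum formula for direct sums gives $I_1(M) = \fka_1 + \fka_2$, and condition (2) becomes $\bar{\fka_1 + \fka_2} = \fkm$. An integrally closed ideal of order one is $\fkm$, so some $\fka_i$ equals $\fkm$, and condition (1) then forces the other to have order $\ge 2$, giving an indecomposable $L$ with $M \cong \fkm \oplus L$ and $\ell = 1$.

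\emph{Inductive step $e \ge 3$.} The central claim is that $\fkm$ is a direct summand of any decomposable $M$ satisfying (1) and (2). Granted this, write $M \cong \fkm \oplus M'$ with $M'$ integrally closed of rank $e-1$ and without free summand. The identity $I(M) = \fkm \cdot I(M')$ gives $\ord I(M') \ge e = (e-1)+1$; the identity $I_1(M) = \fkm\, I_1(M') + I(M')$, combined with $I(M') \subset \fkm^e$ and Lemma~\ref{4.1} applied to $\fka = \fkm^{e-1}$, $\fkb = \fkm\, I_1(M')$, yields $\bar{\fkm\, I_1(M')} = \fkm^{e-1}$; Zariski's product theorem together with unique factorization of complete ideals then cancels $\fkm$ to give $\bar{I_1(M')} = \fkm^{e-2}$. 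Hence $M'$ satisfies (1), (2) with $e$ replaced by $e-1$. If $M'$ is indecomposable, take $\ell = 1$ and $L = M'$; otherwise induction applied to $M'$ produces $M' \cong \fkm^{\oplus \ell'} \oplus L$ and hence $M \cong \fkm^{\oplus (\ell'+1)} \oplus L$.

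\emph{Extracting $\fkm$ as a summand.} Start from any nontrivial decomposition $M = N_1 \oplus N_2$. Both $N_i$ are integrally closed without free summand; setting $e_i = \rank N_i$, $o_i = \ord I(N_i)$ and $o'_i = \ord I_1(N_i)$, we have $o'_i = 0$ if $e_i = 1$ and $o'_i \ge 1$ if $e_i \ge 2$ (otherwise $I_1(N_i) = R$ would produce a free rank-$(e_i-1)$ summand, contradicting the hypothesis on $M$). The Fitting identities $I(M) = I(N_1) I(N_2)$ and $I_1(M) = I(N_1) I_1(N_2) + I_1(N_1) I(N_2)$, combined with (1) and (2), give $o_1 + o_2 \ge e+1$ and $\min(o_1 + o'_2,\; o'_1 + o_2) = e-1$. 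When some $e_i = 1$, Lemma~\ref{4.1} applied to $\fka = \fkm^{e-1}$ and $\fkb$ built from the two-term sum for $I_1(M)$, followed by Zariski's unique factorization, either extracts $\fka_i = \fkm$ directly (so $N_i \cong \fkm$) or pins down $I(N_2) = \fkm^{e_2}$; in the latter case, Kodiyalam's structure theorem together with unique factorization forces $N_2 \cong \fkm^{\oplus e_2}$. When both $e_i \ge 2$, an analogous but more intricate Lemma~\ref{4.1}-plus-Zariski extraction, applied to each term of $I_1(M)$ using the tightness of $o_1 + o_2 \ge e+1$ and $\min = e-1$, uncovers an $\fkm$-factor inside some $I(N_i)$ and hence, after re-decomposing $N_i$, a copy of $\fkm$ as a summand of $M$.

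The hardest part will be the case in which both $e_i \ge 2$: no rank-one ideal is immediately visible in the given decomposition, and one must iterate the Lemma~\ref{4.1} cancellation to detect an $\fkm$-factor from inside one of the higher-rank indecomposable pieces. What powers the whole argument is the near-equality forced by the hypotheses, which leaves almost no slack for any simple complete factor other than $\fkm$ to appear in $\bar{I_1(M)}$.
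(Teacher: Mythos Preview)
Your case ``both $e_i \geq 2$'' contains a genuine gap. You claim that the Lemma~\ref{4.1}-plus-Zariski argument ``uncovers an $\fkm$-factor inside some $I(N_i)$ and hence, after re-decomposing $N_i$, a copy of $\fkm$ as a summand of $M$.'' But an $\fkm$-factor in $I(N_i)$ does \emph{not} allow you to re-decompose $N_i$: the module $N_i$ may well be indecomposable while $I(N_i)$ has $\fkm$ as a factor (indeed, producing such modules is the whole point of the paper). So as written, the argument stalls precisely in the case you flag as hardest.

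The fix---and this is exactly what the paper does, uniformly and without your case split on the $e_i$---is to push the argument one step further. From your own numerics, after relabeling so that $o_1 = e_1$ and $o_2 \geq e_2+1$, you get $I_1(N_1)I(N_2) \subset \fkm^{(e_1-1)+(e_2+1)} = \fkm^e$, hence $I_1(M) \subset I(N_1)I_1(N_2) + \fkm^e$. Lemma~\ref{4.1} then gives $\bar{I(N_1)I_1(N_2)} = \fkm^{e-1}$, and Zariski's product and unique factorization theorems force $\bar{I(N_1)} = \fkm^{e_1}$ as a \emph{pure} power, not merely a multiple of $\fkm$. Now $I(N_1) \subset I(\fkm N_1^{\ast\ast}) = \fkm^{e_1}$ is a reduction, so $N_1 = \bar{N_1} = \bar{\fkm N_1^{\ast\ast}} = \fkm N_1^{\ast\ast} \cong \fkm^{\oplus e_1}$ in one stroke---no re-decomposition needed, and $e_1$ copies of $\fkm$ appear at once. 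This single computation replaces your entire case analysis (including the rank-one cases) and makes the induction on $e$ unnecessary: one simply iterates on the remaining summand $N_2$.
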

\begin{proof}
By the assumption (3), we can write $M \cong L_1 \oplus L_2$
for some non-zero $R$-modules $L_1$ and $L_2$. 
Since $M$ is integrally closed, so are $L_1$ and $L_2$. 
Note that $L_1 \subset \fkm L_1^{\ast \ast}$ and $L_2 \subset \fkm L_2^{\ast \ast}$ because $L_1$ and $L_2$ have no free direct summand. 
Let $e_1=\rank_R(L_1)$ and $e_2=\rank_R(L_2)$. 
Then 
$$I(M)=I(L_1)I(L_2) $$ 
and hence, $\ord(I(L_1)) \geq e_1+1$ or $\ord(I(L_2)) \geq e_2+1$ by the assumption (1). 
We may assume that $\ord(I(L_2)) \geq e_2+1$. Then we claim that 
\medskip

\noindent{\bf Claim}: $\ord(I(L_1))=e_1$. 
\begin{proof}
Since $L_1 \subset \fkm L_1^{\ast \ast}$, 
we have $\ord(I(L_1)) \geq e_1$ and 
$\ord(I_1(L_1)) \geq e_1-1$. Similarly, $\ord(I_1(L_2)) \geq e_2-1$. 
Suppose that $\ord(I(L_1)) \geq e_1+1$. Then $\bar{I_1(M)} \subset \bar{\fkm^e}=\fkm^e$  since 
$$I_1(M)=I(L_1)I_1(L_2)+I_1(L_1)I(L_2). $$
This contradicts to 
the assumption (2). 
\end{proof}

\noindent
Hence, we have $I_1(M) \subset I(L_1)I_1(L_2)+\fkm^e \subset \fkm^{e-1}. $ 
By taking the integral closures, 
$$\bar{I_1(M)}=\bar{I(L_1)I_1(L_2)+\fkm^e}=\bar{\fkm^{e-1}}. $$
Note that $\fkm^e=\fkm \bar{\fkm}^{e-1}$. 
By Lemma \ref{4.1}, $$\bar{I(L_1)I_1(L_2)}=\bar{\fkm^{e-1}}=\fkm^{e-1}. $$ 
By Zariski's product theorem, 
$$\bar{I(L_1)} \cdot \bar{I_1(L_2)}=\fkm^{e-1}. $$ 
Moreover, by Zariski's unique factorization theorem, we have
$$ \bar{I(L_1)}=\fkm^{e_1} \quad \text{and} \quad \bar{I_1(L_2)}=\fkm^{e_2-1}. $$
The first equality implies that 
$L_1 \subset \fkm L_1^{\ast \ast}$ is a reduction because 
$I(L_1) \subset I(\fkm L_1^{\ast \ast})=\fkm^{e_1}$ is a reduction. 
Hence, 
$$L_1=\bar{L_1}=\bar{\fkm L_1^{\ast \ast}}=\fkm L_1^{\ast \ast} \cong \fkm^{\oplus e_1}. $$
Therefore, we have $M \cong \fkm^{\oplus e_1} \oplus L_2$. 
If $L_2$ is not indecomposable, by proceeding the same steps, we finally get the desired form $M \cong \fkm^{\ell} \oplus L$ for some integer $1 \leq \ell \leq e$ and some indecomposable integrally closed $R$-module $L$. This completes the proof. 
\end{proof}

\begin{Remark}\label{4.3}
{\rm 
The product and unique factorization theorems of Zariski have been generalized to other classes in two-dimensional normal local domains (see the works of Cutkosky in \cite{Cut1, Cut2, Cut} and Lipman in \cite{Lip, Li}). The above proof shows that over such rings, 
Theorem \ref{4.2} holds true. It would be interesting to construct modules of arbitrary rank satisfying the assumptions in Theorem \ref{4.2} over such rings}.  
\end{Remark}

Another main result in this section is as follows:

\begin{Theorem}\label{4.4}
Let $M$ be a finitely generated torsion-free, integrally closed $R$-module with $e=\rank_R(M) \geq 1$. Then we have the following inequality: 
$$\ell_R(R/I(M))-\ell_R(M^{\ast \ast}/M) \geq \frac{e(e-1)}{2}. $$
\end{Theorem}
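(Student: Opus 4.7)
The plan is induction on $r := \ord_R(I(M))$, using passage to quadratic transforms after some reductions. The case $e = 1$ is trivial, since then $M$ is isomorphic to an ideal $J \subseteq R$, $M^{\ast\ast} \cong R$, and $I(M) = J$, so both $\ell_R(R/I(M))$ and $\ell_R(M^{\ast\ast}/M)$ agree. A free direct summand of $M$ contributes equally to both terms, so one may assume $M \subseteq \fkm M^{\ast\ast}$; then Propositions \ref{2.3}(2) and \ref{2.4} yield $\mu_R(M) = r + e$ and $r \geq e$.

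For the base case $r = e$, the presentation matrix $\tilde M$ is $e \times 2e$ with all entries in $\fkm$, and $I(M)$ is a complete $\fkm$-primary ideal in $\fkm^e$ with $\mu_R(I(M)) = e + 1$. A direct computation of $F/M$ modulo $\fkm^{e+1}F$, tracking the $e$ generators of $F/M$ against the $e+1$ generators of $I(M)$ inside $\fkm^e$, will yield $\ell_R(R/I(M)) - \ell_R(F/M) \geq \binom{e+1}{2} - e = \binom{e}{2}$.

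For the inductive step $r > e$, pick $x \in \fkm \setminus \fkm^2$ so that $M$ is contracted from $S := R[\fkm/x]$ (Proposition \ref{2.4}). At each first quadratic transform $T = S_Q$ of $R$, Proposition \ref{2.5} gives $MT$ integrally closed over $T$ of rank $e$, and Proposition \ref{2.6} gives $I(MT) = I(M)^T$, whose order is strictly less than $r$. The idea is to combine Zariski's Hoskin--Deligne formula
\[
\ell_R(R/I(M)) = \binom{r+1}{2} + \sum_T \ell_T(T/I(M)^T)
\]
with a module-theoretic analogue
\[
\ell_R(F/M) = \binom{r+1}{2} - \binom{e}{2} + \sum_T \ell_T((MT)^{\ast\ast}/MT),
\]
whose subtraction gives
\[
\ell_R(R/I(M)) - \ell_R(F/M) = \binom{e}{2} + \sum_T\bigl[\ell_T(T/I(MT)) - \ell_T((MT)^{\ast\ast}/MT)\bigr] \geq \binom{e}{2}
\]
by the induction hypothesis applied at each $T$.

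The main obstacle is establishing the module-theoretic length formula on the $F/M$-side above: this is not available off the shelf and requires a careful accounting of how $F/M$ decomposes along the birational passage $R \to S \to T$, using $\mu_R(M) = r + e$ together with Proposition \ref{2.3}(2)(c). A natural alternative is to work directly with the $e \times (r + e)$ presentation matrix $\tilde M$ and invoke Buchsbaum--Rim-type inequalities, extracting the correction $\binom{e}{2}$ as a combinatorial artefact of taking $e$-minors of a matrix with $r + e$ columns.
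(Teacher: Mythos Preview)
Your proposal is a sketch rather than a proof, and both of its load-bearing steps are left open. For the base case $r=e$ you assert that a ``direct computation of $F/M$ modulo $\fkm^{e+1}F$'' will produce the bound, but no such computation is carried out; there is no reason the tracking of generators you describe yields $\ell_R(F/M)\le \ell_R(R/I(M))-\binom{e}{2}$ without further input. More seriously, the inductive step rests entirely on the module Hoskin--Deligne identity
\[
\ell_R(F/M)=\binom{r+1}{2}-\binom{e}{2}+\sum_T \ell_T\bigl((MT)^{\ast\ast}/MT\bigr),
\]
which you yourself flag as ``not available off the shelf.'' Establishing this formula is essentially the whole problem: it already encodes the $\binom{e}{2}$ you are after, and proving it from contractedness and Proposition~\ref{2.3}(2) is nontrivial (it is effectively the content of the Kodiyalam--Mohan length--multiplicity theory). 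Without it, the induction does not start.

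The paper's argument avoids quadratic transforms and induction on $r$ altogether. It invokes the Kodiyalam--Mohan identity (Theorem~\ref{4.8}) to rewrite the left side as $e(I(M))-e(M)$, where $e(-)$ is the Buchsbaum--Rim multiplicity. Taking a minimal reduction $N$ of $M$ with $\mu_R(N)=e+1$, one gets $e(M)=\ell_R(R/I(N))$ and $e(I(M))=e(I(N))$, reducing the question to the purely ideal-theoretic inequality $e(\fka)-\ell_R(R/\fka)\ge\binom{\mu-1}{2}$ for any $\fkm$-primary ideal $\fka$ with $\mu=\mu_R(\fka)$. This in turn is proved via a mapping-cone construction (Lemma~\ref{4.6}) that realizes $\fka/\fkq$ as $R^{\mu-2}/N'$ for some $N'\subset\fkm R^{\mu-2}$ with $\mu_R(N')=\mu-1$, whence $\ell_R(\fka/\fkq)=e(N')\ge e(\fkm R^{\mu-2})=\binom{\mu-1}{2}$. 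Your closing remark about ``Buchsbaum--Rim-type inequalities'' is pointing in the right direction, but the actual mechanism---passing to a minimal reduction and then to the ideal $I(N)$---is what makes the argument go through.
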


Before the proof, we recall some facts on reductions and the Buchsbaum-Rim multiplicity for modules. 
Let $(A, \fkm)$ be a Noetherian local domain of $\dim A =d$, and let 
$M$ be a finitely generated torsion-free $A$-module. Let $N$ be 
a reduction of $M$. Then the inequality 
$$\mu_R(N) \geq \rank_R(M) +d-1$$
holds true. $N$ is said to be a minimal reduction of $M$ if $N$ itself
has no proper reduction. As in the ideal case, one can show that 
minimal generating set of $N$ forms a part of minimal generating 
set of $M$. Therefoere, the equality 
$$\mu_R(N)=\rank_R(M)+d-1$$ 
implies that $N$ is a minimal reduction of $M$. If we further assume that the residue field $A/\fkm$ is infinite, then the converse holds true:

\begin{Theorem}$($\cite[Lemma 2.2]{Rees}$)$\label{4.5}
Let $(A, \fkm)$ be a Noetherian local domain of $\dim A=d$ 
with infinite residue field $A/\fkm$, and let $M$ be a finitely generated torsion-free $A$-module. 
Then $M$ has a (minimal) reduction which is generated by $\rank_R(M)+d-1$ elements.  
\end{Theorem}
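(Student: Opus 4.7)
The strategy is to adapt the classical Northcott--Rees construction of minimal reductions of ideals to the module setting, by passing to the Rees algebra and its fiber cone. First, embed $M \hookrightarrow F := M^{\ast\ast} \cong A^{e}$ (where $e = \rank_A M$) and form the Rees algebra $\Rees(M) \subseteq \Sym_A(F) \cong A[T_1,\dots,T_e]$ as the standard graded $A$-subalgebra generated in degree one by the image of $M$. This is a finitely generated graded Noetherian $A$-domain, and a transcendence/dimension count inside the polynomial ring $\Sym_A(F)$ shows it has Krull dimension $d+e$. Next, form the fiber cone $\mathcal{F}(M) := \Rees(M)/\fkm\Rees(M)$, a standard graded algebra over the infinite residue field $k := A/\fkm$; since $\fkm\Rees(M)$ is a nonzero proper ideal in a graded domain, the \emph{analytic spread} $\ell := \dim \mathcal{F}(M)$ satisfies $\ell \leq d+e-1$.

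The second step is a graded Noether-normalization argument exploiting the infinitude of $k$. Because $\mathcal{F}(M)$ is generated as a $k$-algebra by the finite-dimensional vector space $\mathcal{F}(M)_1 = M/\fkm M$, one selects $\ell$ elements $\bar y_1, \dots, \bar y_\ell \in \mathcal{F}(M)_1$ in sufficiently general linear position so that $\mathcal{F}(M)$ becomes module-finite over $k[\bar y_1,\dots,\bar y_\ell]$; such a choice exists precisely because $k$ is infinite, and the resulting coefficients can be lifted to units in $A$. Lifting each $\bar y_i$ to $y_i \in M$ and setting $N = Ay_1 + \cdots + Ay_\ell$, a graded Nakayama argument upgrades the module-finiteness in $\mathcal{F}(M)$ to integrality of $\Rees(M)$ over $\Rees(N)$. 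The equational criterion \eqref{eqcri} then translates this integrality into the inclusion $M \subseteq \bar N$, exhibiting $N$ as a reduction of $M$ with $\mu_A(N) = \ell$. Combining with the universal lower bound $\mu_A(N) \geq e + d - 1$ stated immediately before the theorem, one concludes that $\ell = e + d - 1$, and the \emph{part of a minimal generating set} property recalled there forces $N$ to be minimal.

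The main technical obstacle is the graded Nakayama step: $\Rees(M)$ is not local, so one cannot apply Nakayama's lemma verbatim. Instead one argues degree by degree---showing inductively that $\Rees(M)_n = \Rees(N)_n + \fkm\Rees(M)_n$ forces $\Rees(M)_n \subseteq \Rees(N) \cdot A[y_1,\dots,y_\ell]_{\leq n}$ after passing to the homogeneous maximal ideal $\fkm + \Rees(M)_+$---and then verifies that the general-position hypothesis on $\bar y_1,\dots,\bar y_\ell$ really propagates to integrality over all of $\Rees(M)$. The infinite residue field is essential exactly here, to ensure the Zariski-open set of admissible linear combinations in $(M/\fkm M)^{\oplus \ell}$ is nonempty and contains lifts coming from $A$-combinations of a fixed generating set of $M$. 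Once this lift is correctly executed, the analytic spread bound from the first paragraph and the equational criterion close the argument.
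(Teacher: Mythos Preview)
The paper does not supply its own proof of this statement: Theorem~\ref{4.5} is simply quoted from \cite[Lemma~2.2]{Rees} and used as a black box. Your outline is the standard fiber-cone/Noether-normalization argument and is essentially what Rees does, so there is nothing to compare against here.

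Two small points in your write-up deserve correction. First, for an arbitrary Noetherian local domain $A$ the double dual $M^{\ast\ast}$ need not be free; the isomorphism $M^{\ast\ast}\cong A^{e}$ you invoke at the outset is a special feature of two-dimensional regular local rings (the paper's ambient setting), not of the general $A$ in Theorem~\ref{4.5}. The fix is routine: form the Rees algebra as the image of $\Sym_A(M)$ in $\Sym_K(M\otimes_A K)\cong K[T_1,\dots,T_e]$, where $K$ is the fraction field of $A$, and run the same dimension count there. Second, your closing step---invoking the lower bound $\mu_A(N)\ge e+d-1$ from the paragraph preceding the theorem to force $\ell=e+d-1$ exactly---is both unnecessary and not universally valid (it already fails when $M$ is free and $d\ge 2$; the paper's surrounding text is a bit loose on this point). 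Once graded Noether normalization and the graded Nakayama argument produce a reduction $N$ with $\mu_A(N)=\ell\le e+d-1$, the theorem is proved: any such $N$ is \emph{a fortiori} generated by $e+d-1$ elements.
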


We make the following lemma which 
follows from the proof of \cite[Proposition 4.1]{KM} in a particular case. 
We provide a proof for the reader's convenience. 

\begin{Lemma}\label{4.6}
Let $\fka$ be an $\fkm$-primary ideal in $R$ with $\mu:=\mu_R(\fka) \geq 3$. Let $\fkq$ be a minimal reduction of $\fka$. Then there exists an $R$-submodule $N$ of $R^{\mu-2}$ such that 
\begin{enumerate}
    \item $N \subset \fkm R^{\mu-2}$, 
    \item $\mu_R(N)=\mu-1$, and 
    \item $R^{\mu-2}/N \cong \fka/\fkq$.
\end{enumerate} 
\end{Lemma}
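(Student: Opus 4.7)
The plan is to construct $N$ explicitly as the kernel of a natural minimal surjection $R^{\mu-2} \twoheadrightarrow \fka/\fkq$ and then to establish the three properties via a snake-lemma computation combined with a projective-dimension argument.

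First I would choose a minimal generating set $q_1, q_2$ of $\fkq$ and extend it to a minimal generating set $q_1, q_2, a_3, \dots, a_\mu$ of $\fka$. Such an extension exists by the standard theory of minimal reductions over a local ring with infinite residue field: the minimal generators of $\fkq$ remain linearly independent modulo $\fkm\fka$, or equivalently $\mu_R(\fka/\fkq) = \mu - 2$. Then I would define $\pi\colon R^{\mu-2} \to \fka/\fkq$ by $e_i \mapsto \bar a_{i+2}$; by the previous remark this is a minimal surjection, so its kernel $N$ automatically satisfies (1) $N \subset \fkm R^{\mu-2}$ and (3) $R^{\mu-2}/N \cong \fka/\fkq$.

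To prove (2) I would pair an upper bound from the snake lemma with a lower bound from projective dimension. Applying the snake lemma to the map of short exact sequences from $0 \to R^2 \to R^\mu \to R^{\mu-2} \to 0$ (inclusion of the first two coordinates followed by projection onto the last $\mu-2$) to $0 \to \fkq \to \fka \to \fka/\fkq \to 0$, with middle vertical map sending the standard basis to $q_1, q_2, a_3, \dots, a_\mu$, would identify $N$ with the quotient $K_2/K_1$, where $K_1 = \ker(R^2 \to \fkq)$ is free of rank $1$ (since $q_1, q_2$ is a regular sequence) and $K_2 = \ker(R^\mu \to \fka)$ is free of rank $\mu - 1$ (since $\pd_R \fka = 1$, as $\fka$ is $\fkm$-primary in a two-dimensional regular local ring). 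This immediately gives $\mu_R(N) \leq \mu - 1$.

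For the lower bound I would argue by projective dimension. Since $\fka/\fkq$ has finite length it has depth $0$, so by Auslander--Buchsbaum $\pd_R(\fka/\fkq) = 2$; from the short exact sequence $0 \to N \to R^{\mu-2} \to \fka/\fkq \to 0$ this forces $\pd_R N = 1$, so $N$ is not free. Since over a domain any torsion-free module of rank $r$ generated by $r$ elements must be free, $\mu_R(N)$ cannot equal $\rank_R N = \mu - 2$, and so $\mu_R(N) = \mu - 1$. This last step---using the projective-dimension comparison to exclude $\mu_R(N) = \mu - 2$---is the crux of the proof and the place where I actually have to exploit the specific structure of $\fka/\fkq$, since the snake lemma by itself only supplies the upper bound.
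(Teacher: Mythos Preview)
Your proof is correct and produces exactly the same module $N$ as the paper. The paper packages the construction differently: it builds the free resolution $0 \to R \to R^{\mu-1} \to R^{\mu-2} \to \fka/\fkq \to 0$ as the mapping cone of the chain map from the Koszul complex on $q_1,q_2$ to a minimal free resolution of $\fka$ (followed by splitting off the obvious free summand), and then simply declares that $N=\Im\tilde\delta_2$ has the three properties. Your snake-lemma identification $N\cong K_2/K_1$ recovers precisely the same exact sequence $0\to R\to R^{\mu-1}\to N\to 0$. Where the two writeups genuinely differ is in the justification of $\mu_R(N)=\mu-1$: the paper asserts it without comment (the implicit reason being that if $\varphi_2:R\to R^{\mu-1}$ had a unit entry one could split it off and obtain $\pd_R(\fka/\fkq)\le 1$), whereas you make this step explicit by arguing directly that $\pd_R(\fka/\fkq)=2$ forces $\pd_R N=1$, so $N$ cannot be free and hence cannot be generated by $\mu-2$ elements. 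Your treatment is therefore a bit more self-contained on the one point the paper leaves to the reader, but the underlying mathematics is the same.
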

\begin{proof}
Since $R/\fkm$ is infinite, $\fkq$ 
is a parameter ideal in $R$. Let $\fkq=\langle x_1, x_2 \rangle$. Extending the minimal generating set to the one of $\fka$, let 
$\fka=\langle x_1, x_2, x_3, \dots , x_{\mu} \rangle$. Consider the following 
commutative diagram: 
\[
  \begin{CD}
     \mathbb K : \ \ \ \  0 @>>>  R  @>{d_2}>>  R^2  @>{d_1}>>  \fkq  @>>>  0 \\
    @.     @VV{\varphi_2}V  @VV{\varphi_1}V  @VV{i}V   @. \\
     \mathbb F : \ \ \ \  0 @>>>  R^{\mu-1} @>{\delta_2}>>  R^{\mu} @>{\delta_1}>>  \fka @>>>  0
  \end{CD}
\]

\noindent
where $\mathbb K$ is the Koszul complex for $x_1, x_2$, $\mathbb F$ is a minimal free resolution of $\fka$ with an inclusion map $i$ and 
$\varphi_1=\left( \begin{array}{c} E_2 \\ O \end{array} \right)$. Taking the mapping cone, we have the following exact sequence:

\[
  \begin{CD}
     \mathbb M : \ \ \ \  
     0 @>>>  R  @>{{\footnotesize \left( \begin{array}{c}
         -d_2  \\
          \varphi_2 
     \end{array}\right)}}>>  \begin{array}{c}
          R^2  \\
          \oplus \\
          R^{\mu-1}
     \end{array}  @>{{\footnotesize \left( \begin{array}{cc}
         -d_1 & O \\
         \varphi_1 & \delta_2
     \end{array}\right)} }>>  \begin{array}{c}
         \fkq  \\
          \oplus \\
          R^{\mu} 
     \end{array}  @>{{\footnotesize \left( \begin{array}{cc} 
     i  & \delta_1 \end{array} \right)}}>>  \fka @>>>  0. 
  \end{CD}
\]

\noindent
This implies the following exact sequence: 

\[
  \begin{CD}
     0 @>>>  R  @>{{\footnotesize \left( \begin{array}{c}
         -d_2  \\
          \varphi_2 
     \end{array}\right)}}>>  \begin{array}{c}
          R^2  \\
          \oplus \\
          R^{\mu-1}
     \end{array}  @>{{\footnotesize \left( \begin{array}{cc}
         \varphi_1 & \delta_2
     \end{array}\right)} }>> 
          R^{\mu} 
      @>{\delta_1}>>  \fka/\fkq @>>>  0. 
  \end{CD}
\]

\noindent
Moreover, by splitting off free direct summands, we have the following exact sequence: 

\[
  \begin{CD}
     0 @>>>  R  @>{\varphi_2}>>  R^{\mu-1}
     @>{\tilde{\delta_2}}>> 
          R^{\mu-2} 
      @>{\tilde{\delta_1}}>>  \fka/\fkq @>>>  0 
  \end{CD}
\]

\noindent
where $\delta_1=(d_1, \tilde{\delta_1})$ and 
$(\varphi_1, \delta_2)=\left( \begin{array}{cc}
    E_2 & \ast \\
    O & \tilde{\delta_2}
\end{array}\right)$. 
Let $N=\Im \tilde{\delta_2}$. Then $N$ satisfies that 
$N \subset \fkm R^{\mu-2}$, $\mu_R(N)=\mu-1$, and 
$R^{\mu-2}/N \cong \fka/\fkq$.
This completes the proof. 
\end{proof}

Let $(A, \fkm)$ be a Noetherian local ring of $\dim A=d>0$, and let $C$ be an $A$-module of finite length. Consider a minimal free presentation of $C$:
\[
  \begin{CD}
     A^m  @>{\varphi}>>  A^{n}
     @>>>  C  @>>>  0.   
  \end{CD}
\]
The map $\varphi$ induces the natural homomorphism $\tilde{\varphi}: \Sym_A(A^m) \to \Sym_A(A^n)$ of symmetric algebras. 
Then one can consider the length function
$$ \lambda(p)=\ell_A([\Coker \tilde{\varphi}]_p). $$ 
Buchsbaum and Rim proved in \cite{BR} that this function is eventually a polynomial 
of degree $d+n-1$, and defined the multiplicity of $C$ as 
$$e(C)=\lim_{p \to \infty}\frac{\lambda(p)}{p^{d+n-1}}(d-n+1)!$$
This is a positive integer, and it does not depend on a choice of minimal free presentations of $C$. 
This invariant $e(C)$ is now called the Buchsbaum-Rim multiplicity of $C$. 

Here is a fundamental formula for the Buchsbaum-Rim multiplicity of a module over a Cohen-Macaulay local ring. 

\begin{Theorem}\label{4.7}
Let $(A, \fkm)$ be a Cohen-Macaulay local ring of $\dim A=d>0$, and 
let $M$ be a submodule of $F:=A^n$ with finite colength. If $N$ is a (minimal) reduction of $M$ with $\mu_A(N)=d+n-1$, then we have 
$$e(F/M)=e(F/N)=\ell_A(F/N)=\ell_A(A/\Fitt_0(F/N)).$$
\end{Theorem}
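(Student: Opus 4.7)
The plan is to break the four-term chain into three equalities and attack each with classical machinery.

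For the first equality $e(F/M)=e(F/N)$, I would invoke the module-theoretic analogue of Rees's theorem on multiplicities of reductions. Since $N \subset M \subset \overline{N}$, the inclusion $\Sym_A(N) \to \Sym_A(M)$ is integral in positive degrees, so the two length functions defining the Buchsbaum-Rim multiplicities of $F/M$ and $F/N$ agree to leading order. This is essentially the content of Buchsbaum-Rim's original argument, refined by Kirby and others.

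For the remaining two equalities, the key input is the Eagon-Northcott complex of the presentation matrix $\tilde N \in \Mat_{n \times (d+n-1)}(A)$. Because $F/N$ has finite length, $\Fitt_0(F/N) = I_n(\tilde N)$ is $\fkm$-primary, hence has height $d$; this is precisely the maximum value $(d+n-1)-n+1 = d$ permitted by the Eagon-Northcott bound. By the Eagon-Northcott acyclicity criterion, the associated complex is then a finite free resolution of $F/N$ of length $d$, making $F/N$ a perfect $A$-module. With this resolution in hand, the Buchsbaum-Rim formula for parameter modules (the main theorem of \cite{BR} in the case $\mu_A(N)=d+n-1$) gives directly $e(F/N)=\ell_A(A/\Fitt_0(F/N))$, since the Buchsbaum-Rim multiplicity can be read off the Hilbert polynomial of the symmetric powers of the complex, and all the contributions collapse to the top Fitting ideal.

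For $\ell_A(F/N)=\ell_A(A/\Fitt_0(F/N))$, I would use the Cohen-Macaulay hypothesis to choose an $A$-regular sequence $\underline x=x_1,\dots,x_d$ inside $\Fitt_0(F/N)$ (possible because $\grade \Fitt_0(F/N) = d$). Reducing modulo $\underline x$ takes one to the Artinian quotient $\bar A = A/(\underline x)$, where $F/N \otimes_A \bar A$ remains perfect and length becomes a multiplicity computation: the associativity formula for Buchsbaum-Rim multiplicity, combined with the classical identity $e(A/\fkq)=\ell(A/\fkq)$ for parameter ideals on a CM local ring, forces $\ell_A(F/N)=e(F/N)$. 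The main technical obstacle is precisely this last identification, since length and Buchsbaum-Rim multiplicity are a priori very different invariants; it is the Cohen-Macaulay hypothesis, together with the parameter count $\mu_A(N)=d+n-1$, that bridges them by ensuring both acyclicity of the Eagon-Northcott complex and the existence of the regular sequence in the Fitting ideal.
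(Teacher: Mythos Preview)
The paper does not actually prove this theorem; it only records citations: \cite[Proposition 3.8]{Ko} for the first equality, \cite[Corollary 4.5]{BR} for the second, and \cite[2.10]{BV} for the third, with \cite[Theorem 1.3(2)]{HH} as an alternative reference. So there is nothing to compare your argument against beyond checking whether it matches the content of those sources.

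Your sketch is broadly in line with what those references do. For $e(F/M)=e(F/N)$ your appeal to the module version of Rees's theorem is exactly the content of \cite[Proposition 3.8]{Ko}. Your identification of the Eagon--Northcott complex as the key tool, with acyclicity forced by the grade condition $\grade I_n(\tilde N)=d$, is indeed the heart of the matter. There is a slight reshuffling, however: you aim first for $e(F/N)=\ell_A(A/\Fitt_0(F/N))$ and then for $\ell_A(F/N)=e(F/N)$, whereas the cited references proceed in the order stated---\cite[Corollary 4.5]{BR} gives $e(F/N)=\ell_A(F/N)$ directly from the acyclicity of the Buchsbaum--Rim complex, and the length identity $\ell_A(F/N)=\ell_A(A/I_n(\tilde N))$ is a separate computation with the Eagon--Northcott resolution, done in \cite[2.10]{BV}. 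Your claim that \cite{BR} ``gives directly $e(F/N)=\ell_A(A/\Fitt_0(F/N))$'' conflates these two steps; what \cite{BR} actually yields is the equality with $\ell_A(F/N)$.

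Your final paragraph, reducing modulo a regular sequence in $\Fitt_0(F/N)$ and invoking an associativity formula, is vaguer than the rest and not how the cited sources argue. It is not clearly wrong, but you have not said what ``associativity formula'' you mean or why the reduction preserves the relevant lengths; the Bruns--Vetter computation is more direct. None of this is a genuine gap---you have correctly isolated the two ingredients (Cohen--Macaulayness and the parameter count $\mu_A(N)=d+n-1$) that make everything work.
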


For the first equality, see \cite[Proposition 3.8]{Ko} for instance. 
The second equality follows from \cite[Corollary 4.5]{BR}, and the third one from \cite[2.10]{BV}. See also \cite[Theorem 1.3 (2)]{HH}. 

We require the following interesting formula, which is given by Kodiyalam and Mohan in \cite{KM}, for integrally closed modules over a two-dimensional regular local ring. 
Let $e(M)$ denote the Buchsbaum-Rim multiplicity $e(M^{\ast \ast}/M)$ of an $R$-module $M$. 

\begin{Theorem}$($\cite[Corollary 4.3]{KM}$)$\label{4.8}
Let $M$ be a finitely generated torsion-free $R$-module. If $M$ is integrally closed, then we have 
$$\ell_R(R/I(M))-\ell_R(M^{\ast \ast}/M)=e(I(M))-e(M).$$
\end{Theorem}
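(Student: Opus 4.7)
The plan is to express both sides of the asserted equality in terms of a single minimal reduction $N$ of $M$ with $\mu_R(N) = e + 1$ together with a compatible minimal reduction $\fkq$ of $I(M)$, thereby collapsing the theorem to a single length identity, which is then to be established via Lemma \ref{4.6}.

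Setting $F = M^{\ast\ast}$, I would first invoke Theorem \ref{4.5} to pick a minimal reduction $N \subset M$ with $\mu_R(N) = \rank_R(M) + \dim R - 1 = e+1$; Theorem \ref{4.7} then yields
\[
e(M) \;=\; e(F/M) \;=\; \ell_R(F/N) \;=\; \ell_R(R/I(N)).
\]
Because $M$ is integrally closed, $\bar N = M$, and Kodiyalam's identity $I(\bar N) = \bar{I(N)}$ (recalled in the introduction) forces $\bar{I(N)} = I(M)$; in particular $I(N) \subset I(M)$ is a reduction. Let $\fkq \subset I(N)$ be a minimal reduction of $I(N)$. By transitivity of reductions, $\fkq$ is also a minimal reduction of $I(M)$, and Theorem \ref{4.7} applied in the rank-one case gives $e(I(M)) = e(I(N)) = \ell_R(R/\fkq)$. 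Using the surjections $F/N \twoheadrightarrow F/M$ and $R/I(N) \twoheadrightarrow R/I(M)$ together with $\ell_R(F/N) = \ell_R(R/I(N))$, one then computes
\[
e(I(M)) - e(M) \;=\; \ell_R(I(N)/\fkq),
\]
\[
\ell_R(R/I(M)) - \ell_R(F/M) \;=\; \ell_R(M/N) - \ell_R(I(M)/I(N)),
\]
so the theorem reduces to the single length identity
\[
\ell_R(M/N) \;=\; \ell_R(I(M)/\fkq).
\]

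This identity is the heart of the argument and the step where I expect essentially all the work to sit. My plan for it is to apply Lemma \ref{4.6} to the $\fkm$-primary ideal $I(M)$ and its minimal reduction $\fkq$: writing $\mu = \mu_R(I(M))$, the lemma produces a submodule $N' \subset R^{\mu-2}$ with $\mu_R(N') = \mu - 1$ such that $R^{\mu-2}/N' \cong I(M)/\fkq$, whence $\ell_R(I(M)/\fkq) = \ell_R(R^{\mu-2}/N')$. The remaining task is to identify this length with $\ell_R(M/N)$; here I would exploit the Hilbert--Burch / Buchsbaum--Eisenbud resolution of the codimension-two Cohen--Macaulay quotient $F/N$ attached to the $e \times (e+1)$ presenting matrix $\tilde N$, whose $e$-minors generate $I(N)$, and the integral closedness of $M$ entering through $\bar{I(N)} = I(M)$ to synchronise the minimal presentation of $\fkq \subset I(M)$ produced by Lemma \ref{4.6} with the one of $N \subset M$ coming from $\tilde N$. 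The main obstacle I anticipate is making this synchronisation concrete enough to yield an honest length comparison---rather than merely a Buchsbaum--Rim multiplicity comparison---between $M/N$ and $R^{\mu-2}/N'$, closing the argument.
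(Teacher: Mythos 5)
First, note that the paper does not prove this statement at all: it is quoted verbatim from Kodiyalam--Mohan \cite[Corollary 4.3]{KM}, so there is no internal proof to compare against, and any blind attempt must in effect reproduce the argument of \cite{KM}. Your preliminary reductions are correct and cleanly done: choosing a minimal reduction $N\subset M$ with $\mu_R(N)=e+1$ and a minimal reduction $\fkq$ of $I(N)$ (hence of $I(M)$), Theorem \ref{4.7} gives $e(M)=\ell_R(F/N)=\ell_R(R/I(N))$ and $e(I(M))=\ell_R(R/\fkq)$, and the additivity of length along $N\subset M\subset F$ and $\fkq\subset I(N)\subset I(M)\subset R$ does reduce the theorem to the single identity $\ell_R(M/N)=\ell_R(I(M)/\fkq)$.

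The gap is that this remaining identity \emph{is} the theorem, not a lemma on the way to it, and your plan for it cannot work as described. The identity is equivalent to $e(M)-\ell_R(F/M)=e(I(M))-\ell_R(R/I(M))$ and is genuinely false without the hypothesis that $M$ is integrally closed: for instance, if $M\subset R^2$ is the parameter module generated by the columns of $\left(\begin{smallmatrix} x^2 & y^2 & 0\\ 0 & x^2 & y^2\end{smallmatrix}\right)$, then $M=N$ is its own minimal reduction, so the left side is $0$, while $I(M)=\langle x^2,y^2\rangle^2$ gives $\ell_R(I(M)/\fkq)=e(I(M))-\ell_R(R/I(M))=16-12=4$. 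Hence any proof must use integral closedness in an essential quantitative way; a comparison of Hilbert--Burch resolutions of $F/N$ with the mapping-cone presentation of $I(M)/\fkq$ from Lemma \ref{4.6} is insensitive to whether $M$ is integrally closed and can at best match Buchsbaum--Rim multiplicities (which you already have), not lengths. You acknowledge this yourself (``the main obstacle \dots is making this synchronisation concrete''), but that obstacle is precisely where all the content lives: already in the split rank-two case $M=\fka\oplus\fkb$ the identity amounts to the nontrivial length formula $\ell_R(R/\fka\fkb)=\ell_R(R/\fka)+\ell_R(R/\fkb)+e(\fka\,|\,\fkb)$ for \emph{complete} ideals. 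The actual proof in \cite{KM} proceeds quite differently, via a Hoskin--Deligne-type length formula for integrally closed modules obtained by induction through quadratic transforms, from which the stated corollary falls out. (A minor additional point: Lemma \ref{4.6} requires $\mu_R(\fka)\geq 3$, so the case $\mu_R(I(M))=2$ would need separate, if easy, treatment.)
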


We are now ready to prove Theorem \ref{4.4}. 

\begin{proof}[Proof of Theorem \ref{4.4}]
Let $M$ be an integrally closed $R$-module with $e=\rank_R(M) \geq 1$. 
The case where $e=1$ is clear. Suppose that $e \geq 2$. We may assume that $M$ has no free direct summand. 
Let $N$ be a minimal reduction of $M$. 
Then $\mu_R(N)=e+1$ and hence, $\mu_R(I(N))=e+1$. By Theorem \ref{4.8}, $$\ell_R(R/I(M))-\ell_R(M^{\ast \ast}/M)=e(I(M))-e(M). $$
Since $N$ is a minimal reduction of $M$, 
$$e(M)=\ell_R(R/I(N))$$
by Theorem \ref{4.7}, and $I(N)$ is
a reduction of $I(M)$ by the determinantal criterion (\ref{detcri}). Hence,  
$$e(I(M))=e(I(N)). $$
Therefore, 
$$\ell_R(R/I(M))-\ell_R(M^{\ast \ast}/M)=e(I(N))-\ell_R(R/I(N)). $$
Here we claim the following:  

\medskip

\noindent
{\bf Claim}: For any $\fkm$-primary ideal
$\fka$ in $R$ with 
$\mu:=\mu_R(\fka) \geq 3$, 
we have the inequality
\begin{equation*}
    e(\fka)-\ell_R(R/\fka) 
    \geq \binom{\mu-1}{2}.
\end{equation*}

\begin{proof}
Let $\fkq$ be a minimal reduction of $\fka$. Then $e(\fka)=\ell_R(R/\fkq)$. Hence 
\begin{align*}
    e(\fka)-\ell_R(R/\fka)&=\ell_R(R/\fkq)-\ell_R(R/\fka)\\
    &=\ell_R(\fka/\fkq). 
\end{align*}
By Lemma \ref{4.6}, 
there exists an $R$-submodule $N$ of $R^{\mu-2}$ such that
$N \subset \fkm R^{\mu-2}$, $\mu_R(N)=\mu-1$ and $R^{\mu-2}/N \cong \fka/\fkq$. Hence, 
\begin{align*}
    \ell_R(\fka/\fkq)&=\ell_R(R^{\mu-2}/N)\\
    &=e(N) \quad \text{by Theorem \ref{4.7}}\\
    &\geq e(\fkm R^{\mu-2}). 
\end{align*}
Let $N'$ be a submodule of $\fkm R^{\mu-2}$ generated by a matrix
$$\left( 
\begin{array}{ccccc}
    x & y& & & \\
     & x& y& & \\
     & & \ddots&\ddots & \\
     & & &x &y \\
\end{array}\right)$$
of size $(\mu-2) \times (\mu-1)$ over $R$. Then, since $I(N')=\fkm^{\mu-2}=I(\fkm R^{\mu-2})$, $N'$ is a minimal reduction of $\fkm R^{\mu-2}$. Therefore, by Theorem \ref{4.7}, 
$$e(\fkm R^{\mu-2})=\ell_R(R/I(N'))=\binom{\mu-1}{2}. $$
This completes the proof of Claim.
\end{proof} 
\noindent 
By applying the Claim for $I(N)$, we have 
\begin{align*}
\ell_R(R/I(M))-\ell_R(M^{\ast \ast}/M)&=e(I(N))-\ell_R(R/I(N)) \\
&\geq \binom{(e+1)-1}{2} \\
&=\binom{e}{2}. 
\end{align*}
\end{proof}

\section{Indecomposability}
In this section, we will investigate the indecomposablity of modules
$M(I;e)$ defined in section \ref{module} using results in section \ref{two}. We begin with the following: 

\begin{Proposition}\label{5.1}
Let $M=M(I;e)$ where $2 \leq e \leq r$. If $I(M)$ is complete, then we have the equality
$$\ell_R(R/I(M))-\ell_R(F/M)=\binom{e}{2}. $$
\end{Proposition}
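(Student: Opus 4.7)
The plan is to compute $\ell_R(F/M)$ and $\ell_R(R/I(M))$ separately and subtract. The hypothesis that $I(M)$ is integrally closed implies $M$ is integrally closed by Theorem \ref{3.7}, so Theorem \ref{4.4} already supplies the inequality $\ell_R(R/I(M)) - \ell_R(F/M) \ge \binom{e}{2}$; the substance of the proof is to establish the reverse inequality by an explicit length computation.

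For $\ell_R(F/M)$, I would use the row filtration $F_k := \bigoplus_{i=1}^k Re_i$, $V_k := (F_k + M)/M$, which yields $V_k/V_{k-1} \cong R/J_k$ where $J_k = \{\alpha \in R : \alpha e_k \in M + F_{k-1}\}$. The central technical lemma is to compute each $J_k$ from the matrix $\tilde M$. For $2 \le k \le e$, an element $m = \sum \alpha_i f_i + \sum \beta_i g_i + \sum \gamma_i h_i$ whose $j$th coordinate vanishes for all $j > k$ must satisfy the cascade $\beta_{j-1}x + \beta_j y + \gamma_{j-1}y^{c_{j-1}} = 0$ (with the convention $\beta_e = 0$), read from $j = e$ downwards; this forces $\beta_k \in \langle y^{c_k}\rangle$, and since $c_k + 1 > c_{k-1}$ (from the strict increase of the $b_i$), the $k$th coordinate of $m$ lies in $\langle x, y^{c_{k-1}}\rangle$. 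Hence $J_k = \langle x, y^{c_{k-1}}\rangle$ with $\ell_R(R/J_k) = c_{k-1}$. The same cascade analysis for $k=1$ forces $\beta_1 \in \langle y^{c_1}\rangle$, and reading off the first coordinate yields
\[
J_1 = \langle x^{a_i'}y^{b_i} : 0 \le i \le r-e+1\rangle + \langle y^{c_1 + 1}\rangle = \langle x^{a_i'}y^{b_i} : 0 \le i \le r-e+1\rangle + \langle y^{b_{r-e+2}}\rangle.
\]

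Summing with the identity $\sum_{k=1}^{e-1} c_k = \sum_{j=r-e+2}^r b_j - \binom{e}{2}$ gives $\ell_R(F/M) = \ell_R(R/J_1) + \sum_{j=r-e+2}^r b_j - \binom{e}{2}$. For the other length, the $r+1$ monomial generators of $I(M)$ furnished by Proposition \ref{3.2} have pairwise non-comparable exponent vectors (the $x$-coordinates strictly decrease from $a_0$ down to $0$ while the $y$-coordinates strictly increase), so they are the minimal monomial generators, and $\ell_R(R/I(M))$ is the count of lattice points below the corresponding staircase. A direct telescoping comparison with the staircase of $J_1$, tracking the shift of $x$-coordinates by $e-1$ together with the new vertices $(r-j, b_j)$ for $r-e+2 \le j \le r$, yields $\ell_R(R/I(M)) - \ell_R(R/J_1) = \sum_{j=r-e+2}^r b_j$. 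Subtraction gives the desired $\binom{e}{2}$.

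The main obstacle is the cascade computation identifying $J_1$: one must verify that, after solving the coefficient constraints from the last row all the way up to the first, the only generator beyond the obvious $x^{a_i'}y^{b_i}$ is the single extra monomial $y^{b_{r-e+2}}$ contributed by the interaction between the $g_i$ and $h_i$ layers. Once $J_1$ is in hand, the remaining bookkeeping with the staircases of $I(M)$ and $J_1$ is mechanical but must be carried out carefully with the indices $a_i, b_i, c_i$.
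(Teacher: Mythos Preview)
Your argument is correct, but it differs from the paper's. Both proofs obtain the lower bound $\ell_R(R/I(M))-\ell_R(F/M)\ge\binom{e}{2}$ from Theorems~\ref{3.7} and~\ref{4.4}. For the reverse inequality, however, the paper does not compute either length separately: it writes down the single row vector
\[
\varphi=(x^{e-1},\,-x^{e-2}y,\,\dots,\,(-1)^{e-1}y^{e-1}):F\longrightarrow \fkm^{e-1}/I(M),
\]
checks directly that $\varphi$ kills every generator $f_i,g_i,h_i$ of $M$ (using Proposition~\ref{3.2} for the $h_i$), and concludes $\ell_R(F/M)\ge\ell_R(\fkm^{e-1}/I(M))=\ell_R(R/I(M))-\binom{e}{2}$. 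Your route instead filters $F/M$ by rows, identifies each graded piece as $R/J_k$ via the same cascade used in the proof of Proposition~\ref{3.4}, and then matches the resulting sum against the staircase count for $I(M)$. The paper's map is quicker and more conceptual; your computation is longer but yields more, since it produces the actual value of $\ell_R(F/M)$ and in fact establishes the equality $\ell_R(R/I(M))-\ell_R(F/M)=\binom{e}{2}$ \emph{unconditionally}, without invoking the completeness hypothesis or Theorem~\ref{4.4} at all. One small point to make explicit in your write-up: when $a_{r-e+1}'=0$ the generator $y^{b_{r-e+2}}$ of $J_1$ is redundant, but the staircase formula $\ell_R(R/J_1)=\sum_{i=1}^{r-e+2}a_{i-1}'(b_i-b_{i-1})$ remains valid since the last summand is then zero.
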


\begin{proof}
Suppose that $I(M)$ is complete. 
Then $M$ is integrally closed by Theorem \ref{3.7}, and, hence, 
$$\ell_R(R/I(M))-\ell_R(F/M) \geq \binom{e}{2} $$
by Theorem \ref{4.4}. 
To show the converse inequality, we consider the following surjective $R$-linear map: 
$$\begin{CD}
   \varphi: \ F @>>> \fkm^{e-1}/I(M) 
\end{CD}
$$
defined by a matrix 
$$\varphi=(x^{e-1}, -x^{e-2}y, \cdots, (-1)^{i-1}x^{e-i}y^{i-1}, \cdots, (-1)^{e-1}y^{e-1}). $$ 
Then, by Proposition \ref{3.3}, it is easy to see that $\varphi(f_i)=0$ for all $0 \leq i \leq r-e+1$ and also 
$\varphi(g_i)=\varphi(h_i)=0$ for all $1 \leq i \leq e-1$. Hence, 
$M \subset \Ker \varphi$ and we have the surjective $R$-linear map 
$$\begin{CD}
   F/M @>>> F/\Ker \varphi \cong \fkm^{e-1}/I(M). 
\end{CD}$$
Therefore,  
\begin{align*}
\ell_R(F/M) &\geq \ell_R(\fkm^{e-1}/I(M)) \\
&= \ell_R(R/I(M))-\ell_R(R/\fkm^{e-1}) \\
&= \ell_R(R/I(M))-\binom{e}{2}. 
\end{align*}
This completes the proof. 
\end{proof}

We first consider the case where $e \leq r-1$. 

\begin{Theorem}\label{5.2}
Let $M=M(I;e)$ where $2 \leq e \leq r-1$. If $I(M)$ is complete, then $M$ is an indecomposable integrally closed $R$-module. 
\end{Theorem}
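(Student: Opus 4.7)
The plan is to verify the hypotheses of Theorem \ref{4.2} for $M = M(I;e)$ and then, assuming $M$ decomposes, to derive a numerical contradiction from Proposition \ref{5.1} and Theorem \ref{4.4}. First I would note that $M$ is integrally closed by Theorem \ref{3.7}, and that since every entry of $\tilde M$ lies in $\fkm$, we have $M \subset \fkm F$, so $M$ admits no free direct summand. To check hypothesis (1) of Theorem \ref{4.2}, completeness of $I(M)$ together with Proposition \ref{3.2} gives $\mu_R(I(M)) = r+1$, hence $\ord(I(M)) = r \geq e+1$ by Lemma \ref{lem}(4). For hypothesis (2), $I_1(M) \subseteq \fkm^{e-1}$ is immediate from the entries; for the reverse inclusion I would compute the $(e-1)$-minors of the $(e \times (e-1))$ submatrix $[g_1 \mid \cdots \mid g_{e-1}]$ obtained by deleting row $j+1$: the resulting square matrix is block diagonal, with a $j \times j$ lower-triangular block with $y$ on the diagonal and an $(e-1-j) \times (e-1-j)$ upper-triangular block with $x$ on the diagonal, yielding the minor $x^{e-1-j} y^j$ for each $0 \leq j \leq e-1$, and hence $\fkm^{e-1} \subseteq I_1(M)$.

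Next, suppose for contradiction that $M$ is decomposable. Theorem \ref{4.2} then supplies $M \cong \fkm^{\oplus \ell} \oplus L$ for some $1 \leq \ell \leq e$ with $L$ indecomposable integrally closed (with the convention that $L \cong \fkm$ is absorbed into the $\fkm$-summand, so either $L = 0$ or $L$ has no $\fkm$-summand). The ``trivial'' case $\ell = e$ (i.e.\ $M \cong \fkm^{\oplus e}$) gives $I(M) = \fkm^e$ with $\mu_R(I(M)) = e+1 < r+1$, contradicting Proposition \ref{3.2}. Thus $1 \leq \ell \leq e-1$, $L$ has rank $e - \ell \geq 1$, and $I(L)$ is a complete $\fkm$-primary ideal. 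From $I(M) = \fkm^\ell I(L)$, completeness of this product (Zariski), and $\mu_R(I(M)) = r+1$, one reads off $\ord(I(L)) = r - \ell$.

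The final step relies on the length formula for products of complete $\fkm$-primary ideals in a two-dimensional regular local ring (a consequence of the Hoskin--Deligne formula),
\[
\ell_R(R/\fkm^\ell I(L)) = \ell_R(R/\fkm^\ell) + \ell_R(R/I(L)) + \ell \cdot \ord(I(L)).
\]
Substituting this and $\ell_R(F/M) = \ell + \ell_R(L^{\ast\ast}/L)$ into the equality $\ell_R(R/I(M)) - \ell_R(F/M) = \binom{e}{2}$ of Proposition \ref{5.1}, invoking Theorem \ref{4.4} for $L$ (which gives $\ell_R(R/I(L)) - \ell_R(L^{\ast\ast}/L) \geq \binom{e-\ell}{2}$), and using the identity $\binom{e}{2} = \binom{\ell}{2} + \binom{e-\ell}{2} + \ell(e-\ell)$, the whole computation collapses to $\ell(r-\ell) \leq \ell(e-\ell)$. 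Since $\ell \geq 1$, this forces $r \leq e$, contradicting $e \leq r-1$. The hard part is the appeal to the length product formula for complete ideals; verifying the hypotheses of Theorem \ref{4.2} and chasing the final numerical identity are fairly routine.
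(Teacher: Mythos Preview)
Your proof is correct and follows essentially the same strategy as the paper: verify the hypotheses of Theorem~\ref{4.2}, extract an $\fkm$-summand, and combine Proposition~\ref{5.1} with Theorem~\ref{4.4} to reach a numerical contradiction. The paper streamlines the endgame by peeling off only a single copy of $\fkm$ (writing $M\cong \fkm\oplus N$) and using the elementary identity $\ell_R(R/\fkm I(N))=\ell_R(R/I(N))+\mu_R(I(N))$ together with $\mu_R(I(N))=\ord(I(N))+1=r$, thereby avoiding any appeal to the Hoskin--Deligne product formula.
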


\begin{proof}
By Theorem \ref{3.7}, $M$ is integrally closed. Suppose that $M$ is not indecomposable $R$-module. 
By assumption, $e+1 \leq r =\ord(I(M))$. It is clear that $I_1(M)=\fkm^{e-1}$, so $\bar{I_1(M)}=\fkm^{e-1}$. Therefore, 
by Theorem \ref{4.2}, $M$ has the maximal ideal $\fkm$ as a direct summand. Hence, we can write $M \cong \fkm \oplus N$ for some integrally closed $R$-module $N$. Then $I(M)=\fkm I(N)$ and  
\begin{align*}
    \ell_R(R/I(M))-\ell_R(F/M)&=\ell_R(R/\fkm I(N))-(1+\ell_R(N^{\ast \ast}/N)) \\
    &=\ell_R(R/I(N))+\mu_R(I(N))-1-\ell_R(N^{\ast \ast}/N). 
\end{align*}
Here we note that 
$$\ell_R(R/I(N))-\ell_R(N^{\ast \ast}/N) \geq \frac{(e-1)(e-2)}{2}$$
by Theorem \ref{4.4}, and $\mu_R(I(N))=\ord(I(N))+1=r$ because $I(N)$ is complete and hence, contracted. 
Therefore, 
$$\ell_R(R/I(M))-\ell_R(F/M) \geq \frac{(e-1)(e-2)}{2}+r-1. $$ 
Since the left hand side is just $\binom{e}{2}$ by Proposition \ref{5.1}, we have the inequality
$$\frac{e(e-1)}{2} \geq \frac{(e-1)(e-2)}{2}+r-1, $$
which implies $e \geq r$. This contradicts to the assumption $e \leq r-1$. Thus, $M$ is indecomposable. 
\end{proof}

When $e=r$, $M=M(I;r)$ is not necessarily indecomposable even if 
$I(M)$ is complete. 

\begin{Example}\label{5.3}
{\rm 
Let $r \geq 4$ and let 
\begin{align*}
    I&=\langle x,y \rangle \langle x,y^2 \rangle \cdots \langle x,y^r \rangle \\
    &=\langle x^{r-i}y^{b_i} \mid 0 \leq i \leq r \rangle
\end{align*} 
where $b_i=\frac{i(i+1)}{2}$. 
Note that $I$ is complete and $I(M(I;e))=I$
for any $2 \leq e \leq r$. By Theorem \ref{5.2}, 
$M(I;2), M(I;3), \dots , M(I;r-1)$ are indecomposable, but $M(I;r)$ is not because 
$$
M(I;r)=\left\langle 
\left(\begin{array}{cc|cccc|cccc}
x&y&y& & & & & & \\ & &x&y& & &y^{c_1}& & \\ & & &x &\ddots& & &\ddots& \\ & & & &\ddots&y& & &\ddots \\ & & & & &x& & & &y^{c_{r-1}}
\end{array}\right)
\right\rangle
$$
has the maximal ideal $\fkm$ as a direct summand. 
}
\end{Example}

However, we can show the following:

\begin{Theorem}\label{5.4}
Let $M=M(I;r)$. Suppose that $I(M)$ is complete. Assume further that $x^{r-1}y \notin I(M)$ and $x^r \in I(M)$. 
Then $M$ is an indecomposable integrally closed $R$-module. 
\end{Theorem}

Before the proof, we make the following lemma. 
Note that in the proof, we use the fact that 
the colength $\ell_R(R/\fka)$ of any monomial ideal $\fka$ in $R$ with respect to $x, y$ 
coincides with the number of monomials in $x, y$ which do not in $\fka$ (see for instance \cite[Lemma 2.5]{GrKi}).

\begin{Lemma}\label{5.5}
Let $\fka$ be an $\fkm$-primary complete monomial ideal with respect to $x, y$, and 
let $r=\ord(\fka) \geq 2$. 
Suppose that $\fka$ is non-simple, and let 
$$\fka=\fkb_1 \fkb_2$$ 
where 
both $\fkb_1$ and $\fkb_2$ are proper complete (monomial) ideals. 
Assume further that $x^r \in \fka$ and $x^{r-1}y \notin \fka$. 
Then we have the following proper inequality:
$$\ell_R(R/\fka)-\ell_R(R/\fkb_1)-\ell_R(R/\fkb_2) > \ord(\fkb_1) \ord(\fkb_2).  $$
\end{Lemma}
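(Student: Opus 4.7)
The plan is to translate both sides of the desired inequality into Newton-polyhedron edge data and conclude by a term-by-term positivity argument.

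First, I would interpret the hypotheses geometrically. Combined with $\ord(\fka) = r$, the condition $x^r \in \fka$ forces the $x$-axis vertex of ${\rm NP}(\fka)$ to be exactly $(r, 0)$. Then $x^{r-1}y \notin \fka$ says the edge of ${\rm NP}(\fka)$ adjacent to $(r, 0)$ has slope strictly less than $-1$. By convexity, successive edges become steeper as we move up-left, so \emph{every} edge of ${\rm NP}(\fka)$ has slope $< -1$. Since ${\rm NP}(\fka) = {\rm NP}(\fkb_1) + {\rm NP}(\fkb_2)$ as a Minkowski sum, the same is true of every edge of ${\rm NP}(\fkb_1)$ and ${\rm NP}(\fkb_2)$. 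Writing the Zariski factorizations $\fkb_1 = \prod_j \fkp_j^{a_j}$ and $\fkb_2 = \prod_k \fkp_k^{b_k}$ with simple factors $\fkp_l = \bar{\langle x^{\alpha_l}, y^{\beta_l}\rangle}$, this means $\alpha_l < \beta_l$ for every simple factor, so $\ord(\fkp_l) = \alpha_l$ and $\beta_l - \alpha_l \geq 1$.

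Second, I would establish the bilinear edge formula
\[
\ell_R(R/\fkb_1 \fkb_2) - \ell_R(R/\fkb_1) - \ell_R(R/\fkb_2) = \sum_{j, k} a_j b_k \min(\alpha_j \beta_k, \alpha_k \beta_j).
\]
The fastest route is via Pick's theorem applied to the complement polygon $\Pi(\fkc) := \mathbb{R}_{\geq 0}^2 \setminus {\rm NP}(\fkc)$: this expresses $\ell_R(R/\fkc)$ as the area of $\Pi(\fkc)$ plus correction terms that depend only on the two axis-intercepts of ${\rm NP}(\fkc)$ and the total edge length, all of which are \emph{additive} under Minkowski sum. The left-hand side therefore reduces to the pure area difference $A(\Pi_\fka) - A(\Pi_{\fkb_1}) - A(\Pi_{\fkb_2})$, and a shoelace-formula computation collapses this into the displayed sum over pairs of edges, which is manifestly bilinear in the simple factorizations. (The simple-pair case can be cross-checked using the elementary identity $\ell_R(R/\bar{\langle x^p, y^q\rangle}) = (pq + p + q - 1)/2$, valid for $\gcd(p, q) = 1$.)

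Finally, combining with $\ord(\fkb_1)\ord(\fkb_2) = \sum_{j, k} a_j b_k \alpha_j \alpha_k$ from step one gives
\[
\ell_R(R/\fka) - \ell_R(R/\fkb_1) - \ell_R(R/\fkb_2) - \ord(\fkb_1)\ord(\fkb_2) = \sum_{j, k} a_j b_k \min\bigl(\alpha_j(\beta_k - \alpha_k),\, \alpha_k(\beta_j - \alpha_j)\bigr),
\]
and each summand is strictly positive because $\alpha_l \geq 1$ and $\beta_l - \alpha_l \geq 1$ for every simple factor by step one. Since $\fkb_1, \fkb_2$ are proper, at least one pair $(j, k)$ has $a_j, b_k \geq 1$, so the total is strictly positive. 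The main obstacle is the shoelace/Pick identity in step two: it requires a careful (but elementary) area computation for the complement polygons of Newton polyhedra and an analysis of how their areas interact under Minkowski sum. Once that identity is in hand, the hypotheses translate directly into the pointwise positivity exploited in step three.
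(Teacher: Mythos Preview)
Your argument is correct and takes a genuinely different route from the paper. The paper argues by induction on the number $\ell$ of simple factors of $\fka$: it first treats the case where $\fkb_1$ is a single simple ideal $\fka_1=\bar{\langle x^{p_1},y^{q_1}\rangle}$ with $q_1/p_1$ minimal among all factors, asserting the identity $\ell_R(R/\fka)-\ell_R(R/\fka_1)-\ell_R(R/\fkb_2)=r_2 q_1$ (which is the special case of your bilinear formula with one factor peeled off) and then observing $r_2 q_1>r_2 p_1=r_1 r_2$; the general case is reduced to this by splitting one simple factor off $\fkb_1$ at a time and chaining the inequalities through a diagram of intermediate products. Your approach instead produces the closed-form mixed term $\sum_{j,k}a_jb_k\min(\alpha_j\beta_k,\alpha_k\beta_j)$ all at once via the Pick/area identity $\ell_R(R/\fkc)=A(\Pi(\fkc))+\tfrac{1}{2}(\text{axis intercepts})-\tfrac{1}{2}(\text{edge lattice length})$, whose correction terms are additive under Minkowski sum so that only the area difference survives. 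The paper's induction is more self-contained and avoids invoking Pick explicitly, but its base-case identity still rests on the same Newton-polygon count you are formalizing; your version is more transparent, yields an explicit formula for the gap, and makes the strictness visible termwise. The only place you should be careful is in justifying the bilinear area identity in full (the shoelace computation for the merged edge sequence), but once that is done the rest is immediate.
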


\begin{proof}
Let $\fka=\fka_1 \fka_2 \cdots \fka_{\ell}$ be a factorization of $\fka$ into 
simple complete ideals. 
We write 
$$\fka_i=\bar{\langle x^{p_i}, y^{q_i} \rangle}$$ 
where $p_i, q_i$ are coprime positive integers. 
By the assumption that $x^r \in \fka$ and 
$x^{r-1}y \notin \fka$, changing the order of $\fka_i$'s if necessary, we may assume that 
$$\frac{q_{\ell}}{p_{\ell}} \geq \frac{q_{\ell-1}}{p_{\ell-1}} \geq \dots \geq \frac{q_1}{p_1}>1. $$
Then we may also assume that 
$$\fkb_1=\prod_{i \in \Lambda_1} \fka_i, \ \fkb_2=\prod_{i \in \Lambda_2} \fka_i$$
where $\Lambda_1 \cup \Lambda_2=\{1, 2, \dots , \ell \}$, 
$\Lambda_1 \cap \Lambda_2=\emptyset$, $1 \in \Lambda_1$ and 
$\Lambda_2 \neq \emptyset$. Let $r_1=\ord(\fkb_1)$ and $r_2=\ord(\fkb_2)$. Since $x^r \in \fka$, it is easy to see that 
$r_1=\sum_{i \in \Lambda_1} p_i$ and $r_2=\sum_{i \in \Lambda_2} p_i$. 

We first consider the case where $\Lambda_1=\{ 1 \}$, i.e.
$\fkb_1=\fka_1$ and $\fkb_2=\fka_2 \fka_3 \cdots \fka_{\ell}$. 
Then
\begin{align*}
    \ell_R(R/\fka)-\ell_R(R/\fkb_1)-\ell_R(R/\fkb_2)&=r_2 q_1 \\
    &> r_2 p_1 \\
    &=r_1 r_2. 
\end{align*}
In particular, the assertion holds true when $\ell=2$. 

Suppose that $\ell \geq 3$. We may assume that the number of elements of $\Lambda_1$ is at least $2$. Let $\Lambda_1'=\Lambda_1 \setminus \{ 1 \}$, and let $\fkb_1'=\prod_{i \in \Lambda_1'} \fka_i$ and 
$r_1'=\ord(\fkb_1')$. Note that $r_1'=\sum_{i \in \Lambda_1'}p_i=r_1-p_1$. 
We look at the following diagram:
$$
\xymatrix@!=20pt{
&&R \ar@{-}[dl] \ar@{-}[dr] & \\
&\fkb_1' \ar@{-}[dl] \ar@{-}[dr]&&\fkb_2 \ar@{-}[dl] \\
\fka_1 \fkb_1' \ar@{-}[dr]&&\fkb_1' \fkb_2 \ar@{-}[dl] & \\
&\fka_1 \fkb_1' \fkb_2&& \\
}
$$
Note that $\fka=\fka_1 \fkb_1' \fkb_2$ and  $\fkb_1=\fka_1 \fkb_1'$. Then 
\begin{align*}
&    \ell_R(R/\fka)-\ell_R(R/\fkb_1)-\ell_R(R/\fkb_2) \\
&=\ell_R(R/\fkb_1' \fkb_2)+\ell_R(\fkb_1' \fkb_2/\fka_1 \fkb_1' \fkb_2)-\ell_R(R/\fkb_1')-\ell_R(\fkb_1'/\fka_1 \fkb_1')-\ell_R(R/\fkb_2) \\
&>r_1'r_2+\ell_R(\fkb_1' \fkb_2/\fka_1 \fkb_1' \fkb_2)-
\ell_R(\fkb_1'/\fka_1 \fkb_1') \quad \text{by induction} \\
&=r_1'r_2+\ell_R(R/\fka_1 \fkb_1' \fkb_2)-\ell_R(R/\fkb_1' \fkb_2)-\left( \ell_R(R/\fka_1 \fkb_1')-\ell_R(R/\fkb_1')\right) \\
&>r_1'r_2+(r_1'+r_2)p_1+\ell_R(R/\fka_1)-\left(r_1'p_1+\ell_R(R/\fka)\right) \quad \text{by the case where} \ \Lambda_1=\{1\}\\
&=r_1'r_2+r_2p_1 \\
&=r_1 r_2. 
\end{align*}
This completes the proof. 
\end{proof}

Let me give a proof of Theorem \ref{5.4}. 

\begin{proof}[Proof of Theorem \ref{5.4}]
Let $M=M(I;r)$. By Theorem \ref{3.7}, 
$M$ is integrally closed. 
Suppose that $M$ is not indecomposable $R$-module. 
We write $M \cong L_1 \oplus L_2$ for some non-zero integrally closed $R$-modules $L_1$ and $L_2$. Let $e_1=\rank_R(L_1)$ and $e_2=\rank_R(L_2)$. Note that 
$I(L_1)$ and $I(L_2)$ are 
complete and  
$$I(M)=I(L_1)I(L_2). $$ 
Since 
$\ord(I(M))=r=e=e_1+e_2$, 
it follows that 
$\ord(I(L_1))=e_1$ and $\ord(I(L_2))=e_2$. 
Hence, by Lemma \ref{5.5}, 
\begin{equation}\label{ineq1}
    \ell_R(R/I(M))-\ell_R(R/I(L_1))-\ell_R(R/I(L_2))>e_1 e_2. 
\end{equation}
By Theorem \ref{4.4}, 
\begin{equation}\label{ineq2}
\left\{
\begin{array}{c}
    \ell_R(R/I(L_1))-\ell_R(L_1^{\ast \ast}/L_1) \geq  \frac{e_1(e_1-1)}{2} \\
    \ell_R(R/I(L_2))-\ell_R(L_2^{\ast \ast}/L_2) \geq \frac{e_2(e_2-1)}{2} \\
\end{array}
\right. 
\end{equation}
Proposition \ref{5.1} and the above inequalities imply that 
\begin{align*}
    \binom{e}{2}&=\ell_R(R/I(M))-\ell_R(F/M) \\
    &=\ell_R(R/I(M))-\ell_R(L_1^{\ast \ast}/L_1)-\ell_R(L_2^{\ast \ast}/L_2) \\
    &> e_1 e_2+\ell_R(R/I(L_1))+\ell_R(R/I(L_2))-\ell_R(L_1^{\ast \ast}/L_1)-\ell_R(L_2^{\ast \ast}/L_2)  \quad \text{by (\ref{ineq1})} \\
    &\geq e_1 e_2+\frac{e_1(e_1-1)}{2}+\frac{e_2(e_2-1)}{2} \quad \text{by (\ref{ineq2})}.
\end{align*}
Thus, we have 
$$\frac{e(e-1)}{2}-\frac{e_1(e_1-1)}{2}-\frac{e_2(e_2-1)}{2}>e_1e_2. $$
The left hand side is just $e_1e_2$ which is a contradiction. 
Hence $M$ is indecomposable. 
\end{proof}

\section{Consequences}

As a consequence, 
we have the following which immediately implies Theorem \ref{main}. 

\begin{Theorem}\label{6.1}
Let $I$ be a monomial ideal in $($\ref{fixedmono}$)$ 
and assume that $I$ is complete. Let $2 \leq e \leq r$ 
be an integer satisfying $a_{r-e+2}=e-2$. If either
\begin{enumerate}
    \item $r \geq e+1$, or 
    \item $r=e$, $x^{r-1}y \notin I$ and $x^r \in I$
\end{enumerate}
is satisfied, then the associated module $M=M(I;e)$ in Definition \ref{3.1} is an indecomposable integrally closed $R$-module of rank $e$ with $I(M)=I$. 
\end{Theorem}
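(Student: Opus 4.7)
The plan is essentially to assemble pieces that have already been constructed in Sections 3 and 5, so the proof of Theorem \ref{6.1} is a short synthesis rather than a fresh argument. First I would record that $M=M(I;e)$ is a submodule of $F=R^e$ of full rank (so $\rank_R M=e$), which is visible from the shape of the matrix $\tilde{M(I;e)}$ displayed after Definition \ref{3.1} since the last $e-1$ rows already contain the upper-triangular $x,y$-block of rank $e-1$ and the first row contributes an $x^{a_0'}$ with $a_0'\geq 1$.

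Next I would identify $I(M)$ with $I$. Proposition \ref{3.2} gives
\[
I(M)=\langle x^{a_i}y^{b_i}\mid 0\leq i\leq r-e+1\rangle+\langle x^{e-1-i}y^{c_i+i}\mid 1\leq i\leq e-1\rangle,
\]
and $c_i+i=b_{r-e+1+i}$. Under the hypothesis $a_{r-e+2}=e-2$, combined with completeness of $I$ (which forces $a_{r-e+1+i}\geq e-1-i$ for all relevant $i$ via the Newton polyhedron), every monomial in the second block is divisible by the corresponding $x^{a_{r-e+1+i}}y^{b_{r-e+1+i}}$, and conversely those generators of $I$ that are missing from the first block reappear in the second block exactly when $a_{r-e+2}=e-2$. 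This is precisely the content of Remark \ref{3.9}; hence $I(M)=I$. Since $I$ is complete, Theorem \ref{3.7} then yields that $M$ itself is integrally closed.

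For indecomposability, I would split on the hypothesis and cite the indecomposability theorems of Section 5. In case (1), with $r\geq e+1$ (equivalently $e\leq r-1$), Theorem \ref{5.2} applies directly: since $I(M)=I$ is complete, $M$ is indecomposable. In case (2), with $r=e$, $x^{r-1}y\notin I$, and $x^r\in I$, the hypotheses of Theorem \ref{5.4} are satisfied (again using $I(M)=I$), so $M$ is indecomposable. Either way the module $M(I;e)$ is an indecomposable integrally closed $R$-module of rank $e$ with $I(M)=I$, which finishes the proof.

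The genuine work has already been carried out, so the only real obstacle for Theorem \ref{6.1} itself is verifying the compatibility condition $a_{r-e+2}=e-2$ is exactly what is needed to obtain $I(M)=I$ from Proposition \ref{3.2}; this is a purely combinatorial check on the Newton polyhedron of $I$ and is subsumed by Remark \ref{3.9}. The substantial obstacles lie upstream, in the proofs of Theorems \ref{5.2} and \ref{5.4}, which in turn rest on the structural result Theorem \ref{4.2} and the numerical inequality Theorem \ref{4.4}; relative to those, the present theorem is a formal consequence.
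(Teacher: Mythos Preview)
Your proposal is correct and follows essentially the same route as the paper's own proof: use Proposition \ref{3.2} together with the hypothesis $a_{r-e+2}=e-2$ (as in Remark \ref{3.9}) to get $I(M)=I$, apply Theorem \ref{3.7} for integral closedness, and then invoke Theorem \ref{5.2} in case (1) and Theorem \ref{5.4} in case (2). The only minor addition in the paper is an explicit mention of Lemma \ref{lem}(4) to record $\ord(I)=r$, but this is implicit in your appeal to Theorems \ref{5.2} and \ref{5.4}.
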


\begin{proof}
Let $M=M(I;e)$. Since $I$ is complete and $a_{r-e+2}=e-2$, $I=I(M)$ by Proposition \ref{3.3}, and $M$ is integrally closed by Theorem \ref{3.7} (see Remark \ref{3.9}). Note that $r=\ord(I)$ by Lemma \ref{lem}(4). If $r \geq e+1$, 
then $M$ is indecomposable by Theorem \ref{5.2}. 
Suppose that $r=e, x^{r-1}y \notin I$ and $x^r \in I$. Then $M$ is indecomposable by Theorem \ref{5.4}. 
This completes the proof. 
\end{proof}

The condition $a_{r-e+2}=e-2$ is clearly satisfied when $e=2$. Hence we have the following which implies our previous result Theorem \ref{prev} (\cite[Theorem 1.3]{Ha}). 
We note that the proof is significantly simplified than the one in \cite{Ha}. 

\begin{Corollary}\label{6.2}
Let $I$ be a monomial ideal in $($\ref{fixedmono}$)$ and assume that $I$ is complete. If either 
\begin{enumerate}
    \item $r \geq 3$, or  
    \item $r=2$ and $xy \notin I$
\end{enumerate}
is satisfied, then the associated module 
$M=M(I;2)$ is an indecomposable integrally closed $R$-module of rank $2$ with $I(M)=I$.
\end{Corollary}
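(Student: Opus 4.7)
My plan is to obtain Corollary \ref{6.2} as a direct specialization of Theorem \ref{6.1} with $e=2$, after checking two small compatibility points.

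The first observation is that the numerical hypothesis $a_{r-e+2}=e-2$ of Theorem \ref{6.1} is automatic when $e=2$, since it then reads $a_r=0$, which is built into the standing convention on the exponents in $(\star)$. Hence case $(1)$ of the corollary ($r\geq 3$) matches case $(1)$ of Theorem \ref{6.1} verbatim, and the conclusion follows at once from Theorem \ref{6.1}.

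For case $(2)$ ($r=2$ and $xy\notin I$) I still need to supply the extra condition $x^r=x^2\in I$ appearing in Theorem \ref{6.1}$(2)$. I would derive this from the shape of a complete monomial ideal of order $2$: by Lemma \ref{lem}$(5)$ we have $a_1=1$, so $I=\langle x^{a_0}, xy^{b_1}, y^{b_2}\rangle$ with $a_0\leq b_2$, and $xy\notin I=\bar I$ forces the lattice point $(1,1)$ to lie strictly outside ${\rm NP}(I)$, which gives $b_1\geq 2$. Applying Zariski's factorization of $I$ into simple complete monomial ideals of the form $\bar{\langle x^p, y^q\rangle}$ with $\gcd(p,q)=1$ (recalled in Section 2), a short case check of the factorizations compatible with $\ord(I)=2$, $a_0\leq b_2$, and $xy\notin I$ forces $a_0=2$. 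Thus $x^2\in I$ and Theorem \ref{6.1}$(2)$ applies, completing case $(2)$.

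The only mildly nontrivial step is the case analysis giving $x^2\in I$; but the classification of complete monomial ideals of order $2$ is very short (either a simple ideal $\bar{\langle x^2, y^{q}\rangle}$ with $q\geq 3$ odd, or a product of two order-$1$ simples of the form $\bar{\langle x, y^{q_i}\rangle}$ or $\bar{\langle x^{p_i}, y\rangle}$), and the constraints $a_0\leq b_2$ and $xy\notin I$ directly single out those with $a_0=2$. I do not anticipate any further obstacle; the whole argument is essentially a bookkeeping verification that reduces Corollary \ref{6.2} to the already-proved Theorem \ref{6.1}.
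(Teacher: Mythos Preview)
Your proposal is correct and follows exactly the paper's route: specialize Theorem \ref{6.1} to $e=2$, noting that $a_{r-e+2}=e-2$ then reads $a_r=0$, which is part of the standing convention $(\star)$. The paper states Corollary \ref{6.2} without a written proof, simply remarking beforehand that the condition $a_{r-e+2}=e-2$ is ``clearly satisfied when $e=2$''; the implication $xy\notin I\Rightarrow x^2\in I$ is treated as automatic (it already appears parenthetically in the statement of Theorem \ref{prev}).

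One small comment: your verification of $x^2\in I$ via Zariski's factorization is more machinery than necessary. Once you know (Lemma \ref{lem}(4)) that $\ord(I)=r=2$ and (Lemma \ref{lem}(5)) that $a_1=1$, the three generating monomials are $x^{a_0},\,xy^{b_1},\,y^{b_2}$ with $a_0\geq 2$, $b_1\geq 1$, $b_2>b_1$, and $a_0\leq b_2$. The order condition $\min\{a_0,\,1+b_1,\,b_2\}=2$ together with $xy\notin I$ (so $b_1\geq 2$, hence $1+b_1\geq 3$ and $b_2\geq 3$) forces $a_0=2$ directly. No factorization case analysis is needed.
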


The condition $a_{r-e+2}=e-2$ is also satisfied if 
$e=3$ and $I$ is complete (see Lemma \ref{lem}(5)). Hence we have the following:

\begin{Corollary}\label{6.3}
Let $I$ be a monomial ideal in $($\ref{fixedmono}$)$ and assume that $I$ is complete. If either 
\begin{enumerate}
    \item $r \geq 4$, or  
    \item $r=3$ and $x^2y \notin I, x^3 \in I$ 
\end{enumerate}
is satisfied, then the associated module 
$M=M(I;3)$ is an indecomposable integrally closed $R$-module of rank $3$ with $I(M)=I$.
\end{Corollary}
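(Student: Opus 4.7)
The plan is to deduce this corollary directly from Theorem~\ref{6.1} by specializing to $e=3$, so the task reduces to verifying that the technical hypothesis $a_{r-e+2}=e-2$ of Theorem~\ref{6.1} is automatic in this setting, and then matching up the two case distinctions.

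First, I would note that with $e=3$, the hypothesis $a_{r-e+2}=e-2$ becomes the single equality $a_{r-1}=1$. Because $I$ is assumed complete, Lemma~\ref{lem}(5) tells us precisely that $a_{r-1}=1$; this is a consequence of the Newton polyhedron description (\ref{newton}) of a complete monomial ideal, since if $a_{r-1}\ge 2$ then the lattice point $(1,b_{r-1})$ would lie on the segment from $(0,b_r)$ to $(a_{r-1},b_{r-1})$ (using $a_r=0$), forcing $x y^{b_{r-1}}\in I$ and contradicting the choice of generators. So no further hypothesis is needed for the condition $a_{r-e+2}=e-2$ to hold; it comes for free from completeness together with $e=3$.

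Next I would check that the two alternatives (1) and (2) in Corollary~\ref{6.3} are exactly the alternatives (1) and (2) of Theorem~\ref{6.1} with $e=3$ substituted in. Condition~(1) of Theorem~\ref{6.1} reads $r\ge e+1$, which for $e=3$ becomes $r\ge 4$, matching (1) of the corollary. Condition~(2) of Theorem~\ref{6.1} reads $r=e$, $x^{r-1}y\notin I$, and $x^r\in I$, which for $e=3$ becomes $r=3$, $x^2y\notin I$, and $x^3\in I$, matching (2) of the corollary.

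With these two verifications in place, Theorem~\ref{6.1} applies verbatim and yields that $M=M(I;3)$ is an indecomposable integrally closed $R$-module of rank $3$ with $I(M)=I$. There is no genuine obstacle here; the only subtlety is noticing that completeness of $I$ forces $a_{r-1}=1$, which is exactly the freely-available case of the general numerical hypothesis of Theorem~\ref{6.1} at $e=3$.
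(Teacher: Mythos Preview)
Your proof is correct and follows exactly the paper's approach: the paper does not give a separate proof of Corollary~\ref{6.3} but simply observes that completeness of $I$ forces $a_{r-1}=1$ by Lemma~\ref{lem}(5), whence Theorem~\ref{6.1} applies with $e=3$. Your matching of the two case distinctions is accurate.

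One minor caution: your parenthetical sketch of \emph{why} Lemma~\ref{lem}(5) holds is not right as stated. The point $(1,b_{r-1})$ does not lie on the segment from $(0,b_r)$ to $(a_{r-1},b_{r-1})$; at $x$-coordinate $1$ that segment has $y$-coordinate $b_r-(b_r-b_{r-1})/a_{r-1}$, which is strictly greater than $b_{r-1}$ when $a_{r-1}\ge 2$. The correct argument is that if $a_{r-1}\ge 2$, then there is a lattice point $(1,c)$ with $b_{r-1}<c<b_r$ lying in ${\rm NP}(I)$, so $xy^{c}\in\bar I=I$, contradicting the assumed form of the minimal monomial generators. Since you correctly cite Lemma~\ref{lem}(5) for this fact, the slip in the parenthetical does not affect the validity of your proof.
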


Moreover, 
the condition $a_{r-e+2}=e-2$ is also satisfied for any $2 \leq e \leq r$ 
if 
$I$ is simple and complete. Hence we have the following which implies 
the original result of Kodiyalam in \cite {Ko} (see the last assertion of Theorem \ref{kodi}).

\begin{Corollary}\label{6.4}
Let $I$ be a simple complete monomial ideal in $R$ of order $r$. Then for any $2 \leq e \leq r$, the associated module 
$M=M(I;e)$ is an indecomposable integrally closed $R$-module of rank $e$ with $I(M)=I$. 
\end{Corollary}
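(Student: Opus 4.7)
The plan is to reduce Corollary \ref{6.4} directly to Theorem \ref{6.1}, which requires only verifying the numerical hypothesis $a_{r-e+2}=e-2$ together with the boundary conditions $x^r\in I$, $x^{r-1}y\notin I$ in the case $e=r$. The essential input is the explicit description of the minimal generators of a simple complete monomial ideal.

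First I would invoke the classification recalled at the end of Section 2: a simple complete monomial ideal with respect to $x,y$ has the form $I=\bar{\langle x^p,y^q\rangle}$ with $\gcd(p,q)=1$. Under the normalization $a_0\le b_r$ built into (\ref{fixedmono}), one has $p\le q$; combined with $\ord(I)=r$ this forces $p=r$, and since $r\ge 2$ and $\gcd(r,q)=1$ we automatically get $q>r$. Next, using (\ref{newton}) and the Newton polyhedron, I would show that the minimal generators of $\bar{\langle x^r,y^q\rangle}$ are exactly
\[
x^{r-j}\,y^{\lceil jq/r\rceil}\qquad (j=0,1,\ldots ,r),
\]
yielding $r+1$ generators (consistent with Lemma \ref{lem}(1)) with $a_j=r-j$ for every $j$. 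In particular $a_{r-e+2}=r-(r-e+2)=e-2$ for all $2\le e\le r$, so the hypothesis of Theorem \ref{6.1} concerning $a_{r-e+2}$ is satisfied.

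It then remains to pick the appropriate alternative in the hypothesis of Theorem \ref{6.1}. If $e\le r-1$ then $r\ge e+1$ and alternative (1) applies immediately. If $e=r$, I would verify alternative (2): $x^r\in I$ is clear since $x^r$ is itself a minimal generator, and for $x^{r-1}y$ the inequality defining membership in the Newton polyhedron reads $q(r-1)+r\ge rq$, i.e.\ $r\ge q$, which contradicts $q>r$; hence $x^{r-1}y\notin I$. Applying Theorem \ref{6.1} produces an indecomposable integrally closed $R$-module $M=M(I;e)$ of rank $e$ with $I(M)=I$, as desired.

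The only step requiring any real work is the explicit description of the minimal generators of $\bar{\langle x^r,y^q\rangle}$, and even that is purely combinatorial: one checks that the lattice points $(r-j,\lceil jq/r\rceil)$ lie on or above the line $qa+rb=rq$, that no such point is divisible by another, and that strict inequalities $\gcd(r,q)=1$ and $q>r$ ensure the $b_j$'s are strictly increasing. Everything else is a direct appeal to Theorem \ref{6.1}; I do not anticipate any genuine obstacle.
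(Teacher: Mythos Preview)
Your proposal is correct and follows essentially the same route as the paper: write $I=\bar{\langle x^r,y^q\rangle}$ with $\gcd(r,q)=1$ and $r<q$, check the hypotheses of Theorem \ref{6.1}, and apply it. The paper's proof is terser---it records the fact $a_{r-e+2}=e-2$ in the sentence preceding the corollary and then only verifies $x^r\in I$ and $x^{r-1}y\notin I$ explicitly---whereas you spell out the full generator list $x^{r-j}y^{\lceil jq/r\rceil}$ to obtain $a_j=r-j$; but this is the same argument with more detail filled in, not a different approach.
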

\begin{proof}
We may assume that $I=\bar{\langle x^r, y^q \rangle}$
where $r, q$ are coprime positive integers with $r < q$. 
Then, by the description (\ref{newton}) of integral closure of monomial ideals, 
it is easy to see that $x^{r-1}y \notin I$ and $x^r \in I$. Thus, we have the assertion by Theorem \ref{6.1}. 
\end{proof}

These corollaries seem to suggest the existence of such modules of rank $e \geq 4$ even in the case where 
a given complete monomial ideal $I$ in (\ref{fixedmono}) does not satisfy the condition $a_{r-e+2}=e-2$. In general, one can ask the following:

\begin{Question}
{\rm 
For a given complete (monomial) ideal $I$ in $R$ of order $r$ and an integer $2 \leq e \leq r$, can we (explicitly) 
construct an indecomposable integrally closed $R$-module $M$ of rank $e$ with $I(M)=I$?
}
\end{Question}

Finally, we illustrate Theorem \ref{main} (Theorem \ref{6.1}) in the following examples.  

\begin{Example}\label{6.6}
{\rm 
Let 
\begin{align*}
    I&=\langle x^7, x^4y, x^3y^2, x^2y^4, xy^5, y^9 \rangle \\
    &=\langle x^3, y\rangle \langle x,y \rangle \bar{\langle x^2,y^3\rangle} \langle x,y^4 \rangle
\end{align*}
be a complete monomial ideal of order $5$. 
Then the following modules are indecomposable integrally closed $R$-modules with the ideal $I$. 

\begin{align*}
    M(I;2)&=\left\langle \left( 
    \begin{array}{ccccc|c|c}
    x^6&x^3y&x^2y^2&xy^4&y^5& y&  \\ & & & & &x&y^8
    \end{array}
    \right) \right\rangle \\
    M(I;3)&=\left\langle \left( 
    \begin{array}{cccc|cc|cc}
    x^5&x^2y&xy^2&y^4&y& & & \\ & & & &x &y&y^4& \\ & & & & &x& &y^7
    \end{array}
    \right) \right\rangle \\
    M(I;4)&=\left\langle \left( 
    \begin{array}{ccc|ccc|ccc}
    x^4&xy&y^2&y& &  & & & \\ & & &x&y& &y^3& & \\ & & & &x&y& &y^3& \\ & & & & &x& & &y^6
    \end{array}
    \right) \right\rangle 
\end{align*}
In this example, the next integrally closed module $M(I;5)$ is 
not indecomposable because 
\begin{align*}
    M(I;5)&=\left\langle \left( 
    \begin{array}{cc|cccc|cccc}
    x^3&y&y& & &  &  & & & \\ & &x&y& &   &  y& & & \\ & & &x& y& & & y^2& & \\ & & & &x&y& & & y^2& \\ & & & & &x& & & & y^5
    \end{array}
    \right) \right\rangle
\end{align*}
has the ideal $\langle x^3, y \rangle$ as a direct summand. 
}
\end{Example}

\begin{Example}\label{6.7}
{\rm 
Let 
\begin{align*}
    I&=\langle x^5, x^4y^2, x^3y^3, x^2y^4, xy^6, y^9 \rangle \\
    &=\bar{\langle x^3, y^4 \rangle} \langle x,y^2 \rangle \langle x,y^3 \rangle
\end{align*}
be a complete monomial ideal of order $5$. 
Then the following modules are indecomposable integrally closed $R$-modules with the ideal $I$. 

\begin{align*}
    M(I;2)&=\left\langle \left( 
    \begin{array}{ccccc|c|c}
    x^4&x^3y^2&x^2y^3&xy^4&y^6& y&  \\ & & & & &x&y^8
    \end{array}
    \right) \right\rangle \\
    M(I;3)&=\left\langle \left( 
    \begin{array}{cccc|cc|cc}
    x^3&x^2y^2&xy^3&y^4&y& & & \\ & & & &x &y&y^5& \\ & & & & &x& &y^7
    \end{array}
    \right) \right\rangle \\
    M(I;4)&=\left\langle \left( 
    \begin{array}{ccc|ccc|ccc}
    x^2&xy^2&y^3&y& &  & & & \\ & & &x&y& &y^3& & \\ & & & &x&y& &y^4& \\ & & & & &x& & &y^6
    \end{array}
    \right) \right\rangle \\
    M(I;5)&=\left\langle \left( 
    \begin{array}{cc|cccc|cccc}
    x&y^2&y& & &  &  & & & \\ & &x&y& &   &  y^2& & & \\ & & &x& y& & & y^2& & \\ & & & &x&y& & & y^3& \\ & & & & &x& & & & y^5
    \end{array}
    \right) \right\rangle
\end{align*}
}
\end{Example}

In this way, we can get many concrete examples of indecomposable integrally closed modules over a two-dimensional regular local ring whose ideal is not necessarily simple.

\section*{Acknowledgments}
This work was supported by JSPS KAKENHI Grant Number JP20K03535.




\begin{thebibliography}{99}


\bibitem{BiTr}
H. N. Binh, N. V. Trung, The Bhattacharya function of complete monomial ideals in two variables, 
Communications in Algebra 43 (2015), 2875--2886

\bibitem{BiMo}
C. Bivia-Ausina, J. Montano, Analytic spread and integral closure of integrally decomposable modules, Nagoya Math. J. (2020), 1--26

\bibitem{BV}
W. Bruns, U. Vetter, Length formulas for the local cohomology of exterior powers, Math. Z. 191 (1986), 145--158

\bibitem{BR}
D. A. Buchsbaum, D. S. Rim, A generalized Koszul complex II. Depth and multiplicity, Trans. Amer. Math. Soc. 
111 (1964), 197--224

\bibitem{Qui}
V. Crispin Quin\~{o}nez, Integrally closed monomial ideals and powers of Ideals, Research Reports in Mathematics Number 7 (2002)
Department of Mathematics, Stockholms University

\bibitem{Cut1}
S. D. Cutkosky, A new characterization of rational surface singularities. Invent. Math. 102 (1990), no. 1, 157--177

\bibitem{Cut2}
S. D. Cutkosky, On unique and almost unique factorization of complete ideals. II. Invent. Math. 98 (1989), no. 1, 59--74

\bibitem{Cut}
S. D. Cutkosky, Complete ideals in algebra and geometry, Commutative Algebra: Syzygies, Multiplicities and Birational Algebra, Contemporary Math. 159, Amer. Math. Soc., Providence, RI, 1993, 27--39

\bibitem{Ga}
T. Gaffney, Integral closure of modules and Whitney equisingularity,  Invent. Math. 107 (1992), no. 2, 301--322

\bibitem{GrKi}
S. Greco, K. Kiyek, Some results on simple complete ideals having one characteristic pair, Matematiche 58 (2003), 3--33

\bibitem{HH}
F. Hayasaka, E. Hyry, A note on the Buchsbaum-Rim multiplicity of a parameter module, Proc. Amer. Math. Soc. 138 (2010), 545--551

\bibitem{Ha}
F. Hayasaka, Constructing indecomposable integrally closed modules over a two-dimensional regular local ring, Journal of Algebra 556 (2020), 879--907

\bibitem{HbSw}
R. H$\ddot{\rm u}$bl, I. Swanson, Adjoints of ideals, Michigan Math. J. 57 (2008) 447--462

\bibitem{Hu}
C. Huneke, Complete ideals in two dimensional regular local rings, Commutative Algebra, Proceedings of a Microprogram, MSRI publ. no. 15, Springer-Verlag, 1989, pp. 417--436

\bibitem{HuSa}
C. Huneke, J. Sally, Birational extensions in dimension two and integrally closed ideals, J. Algebra 115 (1988), 481--500

\bibitem{HuSw}
C. Huneke, I. Swanson, Integral Closure of Ideals, Rings, and Modules, London Math. Soc. Lecture Note Series, vol. 336, Cambridge University Press, 2006

\bibitem{KK}
D. Katz, V. Kodiyalam, Symmetric powers of complete modules over a two-dimensional regular local ring, 
Trans. Amer. Math. Soc. 349 (1997), 747--762

\bibitem{KiSt}
K. Kiyek, J. St$\ddot{\rm u}$ckrad, Integral closure of monomial ideals on regular sequences, Rev. Mat. Iberoam. 19 (2003) 483--508

\bibitem{Ko}
V. Kodiyalam, Integrally closed modules over two-dimensional regular local rings, Trans. Amer. Math. Soc. 347 (1995) 3551--3573

\bibitem{KM}
V. Kodiyalam, R. Mohan, Lengths and multiplicities of integrally closed modules over two-dimensional regular local rings, Journal of Algebra 425 (2015) 392--409


\bibitem{Lip}
J. Lipman, Rational singularities, with applications to algebraic surfaces and unique factorization. Inst. Hautes \'Etudes Sci. Publ. Math. No. 36 (1969), 195--279

\bibitem{Li}
J. Lipman, On complete ideals in regular local rings, Algebraic Geometry and Commutative Algebra in honor of 
Masayoshi Nagata, Academic Press, 1987, 203--231

\bibitem{Rees}
D. Rees, Reductions of modules, Proc. Cambridge Philos. Soc. 101 (1987) 431--449

\bibitem{Za}
O. Zariski, Polynomial ideals defined by infinitely near base points, Amer. J. Math. 60 (1938), 151--204

\bibitem{ZS} 
O. Zariski, P. Samuel, Commutative Algebra, vol. 2, D. Van Nostrand, New York, 1960

\end{thebibliography}
\end{document}